\newtheorem{theorem}{Theorem}[section]
\newtheorem{lemma}[theorem]{Lemma}
\newtheorem{proposition}[theorem]{Proposition}
\newtheorem{corollary}[theorem]{Corollary}
\theoremstyle{definition}
\newtheorem{definition}[theorem]{Definition}
\newtheorem{example}[theorem]{Example}
\theoremstyle{remark}
\newtheorem{remark}[theorem]{Remark}
\numberwithin{equation}{section}
\newcommand{\defeq}{\vcentcolon=}
\newcommand{\dt}{\Delta t}
\newcommand{\dx}{\Delta x}
\newcommand{\eps}{\ensuremath{\varepsilon}}
\newcommand{\R}{\mathbb{R}}
\newcommand{\A}{\mathcal{A}}
\newcommand{\II}{\mathcal{I}}
\newcommand{\diff}{\, \mathrm{d}}
\newcommand{\cI}{\mathcal{I}}
\def\e{\varepsilon}
\def\G{\mathcal{G}}
\def\N{\mathbb{N}}
\begin{document}

\title[Regularity and convergence for HJB equations in domains]{Some regularity and convergence results for parabolic Hamilton-Jacobi-Bellman equations in bounded domains}

\author{Athena Picarelli}
\address{Universit{\`a} degli studi di Verona, via Cantarane, 37129 Verona, Italy}
\email{athena.picarelli@univr.it}

\author{Christoph Reisinger}
\address{Mathematical Institute, University of Oxford, Woodstock Road, Oxford, OX2 6GG, United Kingdom}
\email{christoph.reisinger@maths.ox.ac.uk}

\author{Julen Rotaetxe Arto}
\address{Mathematical Institute, University of Oxford, Woodstock Road, Oxford, OX2 6GG, United Kingdom}
\email{julenrotaetxe@gmail.com}
\thanks{The third author was supported in part by the \emph{Programa de Formaci{\'o}n de Investigadores del DEUI del Gobierno Vasco}.}

\subjclass[2010]{Primary: 35K61, 49L25, 65M15; Secondary: 65M06,  93E20}

\date{}


\keywords{Parabolic Hamilton-Jacobi-Bellman equations, switching systems, viscosity solutions, Perron's method, monotone schemes, error bounds}

\begin{abstract}
We study the approximation of parabolic Hamilton-Jacobi-Bellman (HJB) equations in bounded domains with strong Dirichlet boundary conditions. We work under the assumption of the existence of a sufficiently regular barrier function for the problem to obtain well-posedness and regularity of a related switching system and the convergence of its components to the HJB equation. In particular, we show existence of a viscosity solution to the switching system by a novel construction of sub- and supersolutions and application of Perron's method. Error bounds for monotone schemes for the HJB equation are then derived from estimates near the boundary, where the standard regularisation procedure for viscosity solutions is not applicable, and are found to be of the same order as known results for the whole space. We deduce error bounds for some common finite difference and truncated semi-Lagrangian schemes.
%
\end{abstract}

\maketitle

\section{Introduction}
This paper derives the regularity of solutions, and error bounds for their numerical approximation, to
the Hamilton-Jacobi-Bellman (HJB) equation
\begin{align}
\label{E} u_t+\sup_{\alpha \in \A} \mathcal{L}^{\alpha}(t, x,u,Du,D^2u)=0& && \text{in } Q_T,\\
\label{IC}  u(0, x) = \Psi_0(x)& &&\text{for } x \in \overline{\Omega},  \\
\label{BC} u(t, x) = \Psi_1(t, x)& &&\text{for } (t,x) \in (0, T] \times \partial \Omega,
\end{align}
where $\Omega$ is an open and bounded subset of $\mathbb{R}^d$, $Q_T \defeq (0, T] \times \Omega$, $\overline{\Omega} \defeq \Omega \cup \partial \Omega \subset \mathbb{R}^d$, $\mathcal{A}$ is a compact metric space, 
 $\mathcal{L}^{\alpha}:(0, T]\times \Omega \times \R \times\R^d \times \R^{d\times d} \to \R$ defined as
\begin{align} \label{eq:def_L}
\mathcal{L}^{\alpha}(t, x, r, q, X) = -\text{tr}[a^{\alpha}(t, x) X] 
	- b^{\alpha}(t,x)q - c^{\alpha}(t, x) r - \ell^{\alpha}(t, x)
\end{align}
is a second order differential operator, $\Psi_0$ and $\Psi_1$ are the initial and boundary data, respectively. 
The coefficients $a^{\alpha}$, $b^\alpha$, $c^\alpha$ and $\ell^\alpha$ take values, respectively, in $\mathcal{S}^d$, the space of $d\times d$ real symmetric matrices, $\R^d$, $\R$ and $\R$, where $a^{\alpha}$ is assumed to be positive definite, not necessarily strictly. 
We denote by $\partial^*Q_T$ the parabolic boundary of $Q_T$, i.e.\ $\partial^*{Q}_T \defeq (\{0\} \times \overline{\Omega}) \cup  ((0, T] \times \partial \Omega)$.
For compactness, we define
\begin{align}
\label{eq:def_F}
F(t, x, r, q, X) \defeq \sup_{\alpha \in \mathcal{A}} \mathcal{L}^{\alpha}(t, x,r,q,X),
\end{align}
where the operator $\mathcal{L}^{\alpha}$ is given in \eqref{eq:def_L}.

The essential difference to previous studies, in particular \cite{barles2007error}, is that we consider the equation on domains with Dirichlet data, rather than on $\mathbb{R}^d$, which opens up challenging questions regarding the regularity of the solution in the vicinity of the boundary, and the construction and analysis of approximation schemes there.
The case of initial-boundary value problems is practically relevant not only when the original problem is posed on a bounded domain, but also
when the original problem is posed on the whole space and a localisation to a bounded domain is required for computational tractability. Usually, asymptotic approximations to the values at this artificial boundary are set in this case.

Here, we assume that the solution to \eqref{E}--\eqref{BC} satisfies Dirichlet boundary data pointwise. 
It is well-known that 
for degenerate equations,
the solution may not satisfy the boundary conditions but instead
the equation may hold up to the boundary.
This roughly means that the paths of the underlying controlled stochastic process departing from $\partial \Omega$ stay within the domain for a small time regardless of the value of the control, see \cite{barles1998strong}. 
For classical results on
linear parabolic or elliptic PDEs with non-negative characteristic forms at the boundaries see \cite{oleinik1973second, freidlin1985functional, friedman1975stochastic_v2}.
Comparison results for HJB equations on smooth spatial domains are given in \cite{barles1998strong}, and subsequently
under weaker smoothness assumptions on the domain in \cite{chaumont2004uniqueness}. 

The assumptions we make on the domain are identical to \cite{KrylovDong07Estimates} and formulated in terms of the existence of a smooth ``barrier function'', essentially a positive strict supersolution which is zero at the boundary.
{Under this assumption, the existence of a continuous solution to \eqref{E}--\eqref{BC} is shown in \cite{KrylovDong07Regular} using a stochastic representation result for the HJB equation.
From the probabilistic point of view, the barrier assumption} can be interpreted as ensuring that the expected exit time of the controlled stochastic process goes to zero as the boundary is approached. 
It subsequently allows estimates of the solution and its numerical approximations near the boundary.

{We first extend these well-posedness and regularity results to the switching system}
\begin{align}
\label{eq:A1}
F_i(t, x,u,\partial_tu_i,Du_i,D^2u_i)&=0 && \text{in }\quad Q_T,\\
\label{eq:ICA1}
u_i(0, x) &= \Psi_{0}(x), && \text{for}\quad x \in \overline{\Omega}, \\
\label{eq:BCA1}
u_i(t, x) &= \Psi_{1}(t,x), && \text{for}\quad (t, x) \in (0, T] \times \partial \Omega,
\end{align}
for all $i\in\II:=\{1,\dots,M\}$, where $u=(u_1,\ldots,u_M)$, and with 
\begin{align}
\label{eq:defFi}
F_i(t,x,r,p_t,p_x,X)&
=\max\Big\{p_t +
\sup_{\alpha\in\A_i}\mathcal{L}^{\alpha}_i(t, x,r_i,p_x,X);\   
r_i-\mathcal{M}_i r\Big\},\\
\label{eq:defLi}
\mathcal{L}^{\alpha}_i(t,x,s,q,X)&=-\mathrm{tr}[a^{\alpha}_i(t,x) X] -
b^{\alpha}_i(t,x) q - c^{\alpha}_i(t,x) s -\ell^{\alpha}_i(t,x), \\
\label{eq:defM}
\mathcal{M}_i r &= \min_{j \neq i} \{ r_j + k \}, 
\end{align}
$r \in \R^M$, and $k>0$ is a constant switching cost.
{We then analyse 
the convergence} of its components to the solution of the HJB equation as $k\rightarrow 0$.
This extends the results in \cite{barles2007error} to domains and is of independent interest.
Crucially, this 
requires new technical estimates near the boundaries.

A probabilistic interpretation can be given for the solution to the switching system \eqref{eq:A1}--\eqref{eq:BCA1} as the value function of a controlled optimal switching problem (see for instance \cite{pham2009continuous} for the infinite horizon case). 
Although it should be possible to {thus} extend the technique used in \cite{KrylovDong07Regular}
{(to derive the existence of a continuous solution to \eqref{E} satisfying pointwise the boundary conditions from its stochastic representation)} to the switching system,
we follow a different path and construct suitable sub- and supersolutions satisfying the initial and boundary data to deduce the existence of a continuous viscosity solution by Perron's method. This gives a much shorter proof and allows us to stay fully in an analytic framework.

We moreover provide a continuous dependence result and a comparison principle for \eqref{eq:A1}--\eqref{eq:BCA1}, where the solution close to the boundary is controlled essentially by the barrier function, and deduce convergence of order 1/3 in the switching cost.

The final aim of the analysis is to estimate the difference between the viscosity solution of the HJB equation and an approximate solution computed by means of a numerical scheme.
Let $\G_h \subset \overline{Q}_T$ be a discrete grid with refinement parameter $h$, then fully discrete numerical schemes for \eqref{E}--\eqref{BC} can be written as
\begin{align}
\label{S}
S(h,t,x,u_h(t,x),[u_h]_{t,x})&=0 &&\text{in}\quad
\mathcal{G}_h^+ \defeq \mathcal{G}_h \setminus\ (\{t=0\} \cup \partial \Omega),\\ 
u_h(0,x)&=\Psi_{h,0}(x) && \text{in}\quad \mathcal{G}_h^0 \defeq \mathcal{G}_h\cap\{t=0\},\nonumber \\
u_h(t,x)&=\Psi_{h,1}(t, x) && \text{in}\quad \mathcal{G}_h^1 \defeq \mathcal{G}_h\cap ((0,T] \cap \partial \Omega) ,\nonumber
\end{align}
where
{
$[u_h]_{t,x}$ denotes the numerical solution for $\mathcal{G}_h \backslash (t,x)$.
We will assume that
}
$S$ is a consistent, monotone and uniformly
continuous approximation of the equation \eqref{E} on the
grid $\mathcal{G}^+_h$ {
in the usual sense, which will be made precise later}.
By analogy to the continuous case we denote $\partial^* \mathcal{G}_h \defeq \mathcal{G}_h^0 \cup \mathcal{G}_h^1$.
Following the notation in \cite{barles2007error}, we say that any function  $u_h:\mathcal{G}_h \to \mathbb{R}$ is a grid function and, if finite, belongs to $C_b(\mathcal{G}_h)$, the space of bounded and continuous grid functions.
As we are interested in discrete $\mathcal{G}_h$, as noted in \cite{barles2007error}, any grid function on $\mathcal{G}_h$ is continuous.

The objective is thus to find upper and lower bounds for the difference $u - u_h$.
A central element of the analysis is the use of Krylov's ``shaking coefficients" method (see \cite{krylov1997rate, krylov2000, KrylovDong07Estimates}) to find perturbed equations from which to construct smooth approximations to $u$ and, under certain regularity of the numerical scheme, to $u_h$.
This allows use of the truncation error to bound $u - u_h$.

Key to the approach is the convexity (or concavity) of \eqref{E}, which is used to prove that mollified subsolutions (supersolutions) of \eqref{E} 
are still subsolutions (supersolutions), see Lemma 2.7 in \cite{barles2002convergence}, and gives upper (or lower) bounds.
Without this convexity (or concavity) the error analysis yields weaker results. 
For instance, \cite{caffarelli2008Rate} proves the existence of an algebraic rate of convergence for the finite difference approximation of
$F(D^2 u) = \ell(x),$
on a regular domain with Dirichlet boundary data. 
This result is extended to Isaacs equations in \cite{krylov2015rate}.
However, neither of these articles provide an explicit way to calculate the rates, which may depend on the constant of ellipticity (see \cite{krylov2015rate}).

Hence, while convexity (concavity) allows us to build smooth subsolutions (supersolutions) and upper (lower) bounds, by ``shaking the coefficients" of the equation, 
we cannot directly construct smooth supersolutions (subsolutions) and 
the other bound. 
Two different main approaches have been developed in the literature. 

The first approach, applied in \cite{KrylovDong07Estimates}, treats the equation and the scheme symmetrically and constructs smooth subsolutions to both the HJB equation \eqref{E} and the scheme \eqref{S}.
This procedure requires sufficient regularity of the solution and continuous dependence estimates on the boundary data and the coefficients for both \eqref{E} and the scheme \eqref{S}.
Such results for solutions of equation \eqref{E} have been proved in \cite{KrylovDong07Regular}, under suitable conditions, by means of probabilistic arguments.
For problems posed on spatial domains, Krylov's regularization has also been previously applied in \cite{Picarelli2015dynamic} for the particular case of a semi-infinite domain with an oblique derivative condition. 

The second approach, used in \cite{barles2007error} on $\R^d$,  also derives one of the bounds by ``shaking the coefficients" of the equation {to produce a smooth subsolution}, but for the other bound, 
a key tool for the analysis is the approximation of \eqref{E}--\eqref{BC} by a switching system of the  type \eqref{eq:A1}--\eqref{eq:defM}.
As observed by the authors of \cite{barles2007error}, this procedure 
can be applied to a wider class of schemes, but results in lower rates.
We follow this second  approach 
for the lower bound, in order to deal with the unavoidable complexity of general monotone schemes. 
For the application of this approach to derive error bounds for semi-Lagrangian schemes when $\Omega = \R^d$ see \cite{debrabant2013semi}. The fundamental difference of our analysis to that in \cite{barles2007error} and \cite{debrabant2013semi} is that we consider a bounded domain $\Omega$ with Dirichlet conditions, with the extra technical difficulties this entails.

The range of applicability of the present analysis crucially extends the one in \cite{KrylovDong07Estimates},
which considers a specific semi-discrete scheme  -- essentially, a semi-Lagrangian scheme without interpolation -- which is practically not feasible as the solution is not fully defined
on a fixed mesh. Rather, the solution at a fixed point has to be constructed by a multinomial tree whose nodes depend on the controls, where their number grows exponentially with the number of timesteps.
In contrast, the analysis here is applicable to a class of current state-of-the-art fully discrete monotone schemes, including the different variants of (non-local) semi-Lagrangian \cite{camilli1995approximation, debrabant2013semi, falcone2013book} or hybrid schemes \cite{ma2016unconditionally}, and, under conditions on the diffusion matrix, the (local) seven-point stencil (see, e.g., \cite[Section 5.1.4]{hackbush1992elliptic} or \cite[Section 5.3.1]{kushner2001numerical}).
These more complicated schemes need a different approach to the analysis compared to the one proposed in \cite{KrylovDong07Estimates}, especially for the lower bound,
as explained above. 

As fully discrete monotone schemes generally require a ``wide stencil'' (i.e., involving not only a fixed number of neighbouring nodes), a modification is needed near the boundary.
In \cite{KrylovDong07Estimates}, it is assumed that smooth ``boundary'' data are defined on the whole space, including outside the domain.
Instead, we truncate the wide stencil schemes close to the boundary and modify their coefficients to ensure consistency, albeit usually at a reduced order \cite{ReisingerRotaetxe17}. 
This requires Dirichlet data only on the boundary for the definition of the scheme. 
Interestingly, however, as the solution near the boundary is controlled through the barrier function, the (reduced) order of the truncated scheme at the boundary does not enter in the analysis
and we obtain the same global convergence order as in \cite{barles2007error} on the whole space.
This coincides with the empirical evidence in \cite{ReisingerRotaetxe17, picarelli2017boundary} that the presence of the boundary does not affect the convergence order.

The rest of the paper is organised as follows. 
Section \ref{sec:resultVisc} compiles definitions and results for viscosity solution used throughout the paper.
Section \ref{sec:Fresults} derives some fundamental theoretical results on switching systems with Cauchy-Dirichlet boundary conditions  and estimates the convergence rate of the switching system to a related HJB equation.
Section \ref{sec:Errors} provides the main error bounds for a generic monotone and stable finite difference scheme in terms of the truncation error for a regularised solution.
Section \ref{sec:ErrExamples} then deduces concrete error bounds for two examples of finite difference and semi-Lagrangian schemes from the literature.
Section \ref{sec:conc_err} concludes and suggests directions for further research.

\section{Definitions and general results for HJB equations in 
domains}
\label{sec:resultVisc}

This section contains definitions and background results for HJB equations used throughout the rest of the paper.

\label{sec:defVisc}

We recall that, for a domain $Q_T$, we denote by $\overline{Q}_T$ its closure and by $\partial^*{Q}_T$ the parabolic boundary, i.e.\ $\partial^*{Q}_T \defeq (\{0\} \times \overline{\Omega}) \cup  ((0, T] \times \partial \Omega)$. 
We denote by $\leq$ the component by component ordering in $\mathbb{R}^d$ and the ordering in the sense of positive semi-definite matrices in $\mathcal{S}^d$.

Let $\phi : Q \to \mathbb{R}^d$ be a bounded function from some set $Q$ into $\R^d$ with $d\geq 1$, then the following function norms are used 
$$|\phi|_0 \defeq \sup_{(t, y) \in Q} |\phi(t, y)|,$$
and for any $\delta \in (0, 1]$,
$$[\phi]_\delta \defeq \sup_{(t, x) \neq (s, y)} \frac{|\phi(t, x) - \phi(s, y)|}{(|x-y|+|t-s|^{1/2})^\delta}, \quad \text{and} \quad |\phi|_\delta \defeq |\phi|_0 + [\phi]_\delta.$$ 
As usual, we denote by $C^{n,m}(Q)$ the space of continuous functions $n$-times differentiable in $t$ and $m$ in $x$. If $n=m$ we will simply write $C^{n}(Q)$ and $n = 0$ is used for the space of bounded continuous functions in $Q$. Additionally, for $\delta \in (0,1]$,
${C}^{0}_{\delta}$ denotes the subset of $C^0$ with finite $|\cdot|_\delta$ norm and 
$C^{2}_{\delta}(Q)$ the subset of $C^{1,2}(Q)$ of functions with finite norm
$$
|\phi|_{2,\delta}:= \sum_{\substack{(\beta_0,\beta)\in\mathbb N_0\times\mathbb N_0^d\\2\beta_0+|\beta|\leq 2}} |\partial_t^{\beta_0}D^{\beta} \phi |_\delta.
$$ 

For the regularisation we will take convolutions of functions with the following family of mollifiers in time and space
\begin{align}
\label{eq:rhoxt}
\rho_\eps(t, x) \defeq 
	\frac{1}{\eps^{d+2}}\rho \left( \frac{t}{\eps^2}, \frac{x}{\eps} \right),
\end{align}
where $\eps > 0$, and
\[
\rho \in C^\infty(\R^{d+1}),\quad \rho\geq 0,\quad 
\mathrm{supp}\,\rho = (0,1) \times \{|x| < 1\}, \quad
\int_{\mathrm{supp}\,\rho}\rho(e) \diff e =1.
\]

{ A family of mollifiers in space only is defined similarly and we do not distinguish them notationally for simplicity.}


Let $Q$ be an open set and $d \in \mathbb{N}$, for a locally bounded function $\phi : Q \to \mathbb{R}^d$ we define its upper-semicontinuous envelope
$$\phi^*(x) = \limsup_{\substack{y \to x\\ y \in Q}} \phi(y),$$
and its lower-semicontinuous envelope
$$\phi_*(x) = \liminf_{\substack{y \to x\\ y \in Q}} \phi(y).$$
We denote by $\mathrm{USC}(Q;\R^d)$ and $\mathrm{LSC}(Q;\R^d)$ the usual spaces of upper- and lower-semicontinuous functions $Q \to \mathbb{R}^d$, respectively. 

The relevant notion of solutions for the type of non-linear equations (\ref{E})--(\ref{BC}) is that of viscosity solutions (see \cite{crandall1992user} for a detailed overview). 
{
In the next definition we recall the notion of solution 
when the boundary and initial conditions are satisfied in the ``strong sense".
\begin{definition}[\,{\bf Viscosity solution}\,] \label{def:sol}
A function $\overline{u} \in \mathrm{USC}([0, T] \times \overline{\Omega}; \R)$ 
is a viscosity subsolution, if for each function $\varphi \in C^{1,2}([0, T] \times \overline{\Omega})$, at each maximum point $(t, x)$ of $\overline{u} - \varphi$ we have that
\begin{align}
\nonumber
\varphi_t + F(t, x, \overline{u}, D \varphi, D^2\varphi) &\leq 0, 
& (t,x) \in (0, T] \times \Omega, \\
\nonumber
\varphi - \Psi_0
&\leq 0, & (t,x) \in \{0\} \times \overline{\Omega}, \\
\varphi - \Psi_1
&\leq 0, & (t,x) \in (0, T] \times \partial \Omega.
\label{strong_bc}
\end{align}
Similarly, a function $\underline{u} \in \mathrm{LSC}([0, T] \times \overline{\Omega}; \R)$ 
is a viscosity supersolution, if for each function $\varphi \in C^{1,2}([0, T] \times \overline{\Omega})$, at each minimum point $(t, x)$ of $\underline{u} - \varphi$ we have that
\begin{align*}
\varphi_t + F(t, x, \underline{u}, D \varphi, D^2\varphi) &\geq 0, 
& (t,x) \in (0, T] \times \Omega, \\
\varphi - \Psi_0
&\geq 0, & (t,x) \in \{0\} \times \overline{\Omega}, \\
\varphi - \Psi_1
&\geq 0, & (t,x) \in (0, T] \times \partial \Omega.
\end{align*}
Finally, a continuous function $u$ is a viscosity solution of \eqref{E}--\eqref{BC} if it is both a subsolution and a supersolution.
\end{definition}

{
This definition of viscosity solutions is formulated in terms of smooth test functions $\varphi$. 
It is straightforward to rephrase it in terms of parabolic semijets, see Definition \ref{def:semijets}.\footnote{The use of semijets permits the representation of ``$(Du, D^2 u)$'' for non-differentiable functions $u$. This turns out to be useful in the formulation of the  Crandall-Ishii lemma, see Theorem 8.3 in \cite{crandall1992user} or Theorem \ref{thm:maxprinc} in the appendix, which is the main tool to obtain a maximum principle for semi-continuous functions.}}


Under some structural assumptions on the operator $F$, uniqueness for continuous solutions (in the sense of Definition \ref{def:sol}) of \eqref{E}--\eqref{BC} is proved in \cite[Theorem V.8.1 and Remark 8.1]{fleming2006controlled} as a corollary of a comparison result. 
\label{sec:Regular}

We end this section by a brief illustration of the effect of Dirichlet conditions for degenerate equations 
on numerical schemes. 
In the following example,
the 
uniqueness of
\emph{discontinuous} viscosity solutions fails for boundary conditions in the \emph{weak} sense of Definition 1.1 in \cite{barles1991convergence},
where  (\ref{strong_bc}) is replaced by
\begin{align*}
\min(\varphi_t + F(t, x, \overline{u}^*, D \varphi, D^2\varphi), \overline{u}^*-\Psi_1) \leq 0, & \quad (t,x) \in (0, T] \times \partial \Omega,
\end{align*}
for the subsolution property at the boundary and similar for the supersolution property
(see also \cite{barles1998strong} for a detailed analysis of comparison principles under degeneracy,
and Proposition 2.1 in \cite{jensen2017notion} for a similar observation to ours for the Monge-Amp{\`e}re equation).
As a consequence,
uniform convergence up to the boundary fails.
Instead, the numerical solution forms a boundary layer where it transits from the unique solution in the interior to
the artificial boundary value (see \cite{jakobsen2010monotone}).
Assumption (A2) in Section \ref{sec:Fresults} rules out such behaviour.

\begin{example}
Consider the following second order parabolic PDE
\begin{align}
\label{eq:ExamplePDE}	
u_t - \frac{1}{2} x^2(1-x)^2 u_{xx} + u &=0, && \text{for } (t,x) \in (0, T]\times (0, 1),\\
\label{eq:ExampleBC}	
u(t, x) &= 1, &&\text{for } (t, x) \in (\{0\} \times [0, 1]) \cup ((0, T] \times \{0, 1\}).
\end{align}
It follows 
 that
any $u$ satisfying
\begin{align*}
u(t,x) =  {e}^{-t}, && \quad 
(t,x) \in (0, T] \times (0, 1) \cup \{0\} \times [0,1],\\
{e}^{-t} \le u(t,0), u(t,1) \le 1, && \quad  t \in (0,1],
\end{align*} 
is a viscosity solution to (\ref{eq:ExamplePDE}), (\ref{eq:ExampleBC}) in the sense of \cite{barles1991convergence}. 
Explicitly,
$u^* \le 1$ at the boundary, so that 
the subsolution property holds,
and $u_* = {e}^{-t}$ at the boundary, 
so that we have the supersolution property as the differential equation is satisfied.
Consequently, the viscosity solution is not unique.\footnote{It is also clear that there is no continuous viscosity solution in the sense of Definition \ref{def:sol}.}

We now show that a stable, consistent and monotone scheme may fail to converge uniformly to \emph{any} of these solutions.
Let $\Delta t, \Delta x \geq 0$, $N \defeq T/ \Delta t$ and $J \defeq 1/\Delta x$, then a possible numerical scheme for the approximation of \eqref{eq:ExamplePDE}--\eqref{eq:ExampleBC}	is the following explicit finite difference scheme
\begin{align*}
S(h,t_n,x_j,U_j^n,[U]_{n, j})= 	
\frac{U^n_j - U^{n-1}_j\!\!\!}{\Delta t} \!\!\; - \!\!\; \frac{1}{2} j^2(1\!\!\;-\!\!\;x_j)^2 (U^{n-1}_{j+1} \!\!\;-\!\!\; 2U^{n-1}_j \!\!\!\;+\!\!\; U^{n-1}_{j-1}) \!\!\;+\!\!\; U^{n-1}_j\!\!\!\!\;\;,
\end{align*}
where $h = (\Delta t, \Delta x)$, $n \in [1, N]$, $j \in [1, J-1]$, $t_n = n \Delta t$, $x_j = j \Delta x$, and $U_j^n \equiv U(t_n, x_j)$.
The scheme enforces the initial and boundary conditions. 
One can easily verify that the scheme is monotone and $L^\infty$-stable provided that $\Delta t \le 16 \Delta x^2$.

From the limits below, the numerical solution at the node with $j = 1$ is seen to converge to a constant different from 0 as
$t\rightarrow \infty$,
\begin{align}
U^{n+1}_1 &= \frac{\Delta t}{2}(1-\Delta x)^2[U^n_2 + 1] + (1 - \Delta t - \Delta t(1-\Delta x)^2)U^n_1 \nonumber \\
\label{ineq:Example} &\geq \frac{\Delta t}{2} { (1-\Delta x)^2} + (1 - 2\Delta t) U^n_1 \\
\label{eq:LimitExample} &\geq \sum^n_{m=0}(1 - 2\Delta t)^m \frac{\Delta t}{2}  { (1-\Delta x)^2} + (1 - 2\Delta t)^{n+1}  \rightarrow 
\frac{1 + 3 e^{-2 t}}{4} > \frac{1}{4}
\end{align}
for $n\rightarrow \infty$ with $n \Delta t = t$ fixed,
where 
\eqref{ineq:Example} is obtained from 
$U^n_1, U^n_2 \geq 0$.

Therefore, 
we deduce that for $t > \ln(4)$ the scheme cannot converge uniformly.
\end{example}

\section{Switching systems with Dirichlet boundary conditions}
\label{sec:Fresults}


In this section we will study the following switching system \eqref{eq:A1}--\eqref{eq:BCA1}.
{
The definition of viscosity solutions, sub- and supersolutions for (\ref{eq:A1})--(\ref{eq:BCA1}) is an obvious extension from Definition \ref{def:sol}.
}

Let us consider the following assumptions on the coefficients and boundary data, which are
very similar to assumption 2.2 in \cite{KrylovDong07Estimates}; see also \cite{KrylovDong07Regular} for the introduction of
the barrier function.
Note that $\Psi_1: \overline{Q}_T \rightarrow \mathbb{R}$, i.e., it is required on all of $\overline \Omega$.

\smallskip
\noindent {\bf (A1)} {\bf (Regularity of the coefficients)}  For any $i\in \II$, $\A_i$ is a compact 
metric space.
For any $i\in \II$ and $\alpha\in \A_i$, let
$a^{\alpha}_i=\frac{1}{2}\sigma^{\alpha}_i{\sigma^{\alpha, }_i}^T$
for some $d\times P$ matrix $\sigma^{\alpha}_i$. Furthermore,
there is a constant $C_0\geq 0$ 
independent of $i,\alpha$, such that
$$
[\Psi_{0}]_1+ 
|\sigma^{\alpha}_i|_1+|b^{\alpha}_i|_1 
+|c^{\alpha}_i|_1+|\ell^{\alpha}_i|_1\leq C_0.$$
\medskip
\noindent {\bf (A2)} {\bf (Barrier function)}
There exists a function $\zeta\in C^{1,2}(\overline Q_T)$, such that 
\begin{eqnarray}
\zeta > 0 && \text{ in } Q_T, \label{eq:zeta0}\\
\zeta = 0 && \text{ in } (0,T]\times \partial \Omega, \label{eq:zeta1}
\end{eqnarray}
and for every $i\in\mathcal I, \alpha\in\mathcal A_i$ 
\begin{equation}
\label{zeta_inequ}
-\zeta_t +b^\alpha_i D\zeta +tr[a^\alpha_i D^2\zeta] + c^\alpha_i \zeta \leq -1\qquad \text{in } Q_T.
\end{equation}

\begin{remark}[{Existence of barrier function}]\label{rem:barrier}
The central assumption (A2) merits a detailed discussion.
{We begin with the non-degenerate case.}

For a strictly elliptic second order operator, i.e.\ if $\xi^T a^\alpha_i\xi \geq \lambda |\xi|^2, \forall \xi\in \R^d$, with $c^\alpha_i \le 0$, and for a sufficiently smooth boundary,
one expects there to be a classical solution to the {stationary} equation
\begin{equation*}
\sup_{i\in\mathcal I, \alpha\in\mathcal A_i} \left\{ b^\alpha_i D\zeta +tr[a^\alpha_i D^2\zeta] + c^\alpha_i \zeta\right\} = -1\qquad \text{in } {\Omega}
\end{equation*}
with zero Dirichlet conditions on the boundary $\partial\Omega$.
{Theorem 3.2 in \cite{safonov} gives detailed conditions for the existence of a solution which is $C^{2+\alpha}$ up to the boundary for some $\alpha>0$.}
This function can serve as barrier function, where the strict positivity in the interior follows from the strong maximum principle.

Some cases of degeneracy, including examples with nonsmooth domains, are discussed in \cite[Example 2.3]{KrylovDong07Estimates}.

{We now give some sufficient conditions in the degenerate case.}
If $\partial \Omega \in W^{3,\infty}$, the distance  $d$ to the boundary $\partial\Omega$ is of class $C^2$ in a neighborhood of  $\partial \Omega$. In this case assumption (A2) is satisfied as soon as there exists $\eta>0$ and a neighborhood $\Omega'$ of $\partial\Omega$ such that for every $i\in\mathcal I, \alpha\in\mathcal A_i$
\begin{equation}\label{eq:cond_dist}
-b^\alpha_i n(x)  + tr[a^\alpha_i D^2d(x)]\leq -\eta \qquad \text{in } (0,T]\times \Omega'
\end{equation} 
(where $n(\cdot)$ denotes the outward normal vector), which is seen by taking $$\zeta(t,x)=\frac{1}{\eta}e^{\lambda (t-T)} \tilde d(x)$$ for $\lambda>0$ big enough and
$\tilde d\geq d$ a $C^2(\overline{\Omega})$ function coinciding with $d$ in $\Omega'$.

Condition \eqref{eq:cond_dist} can be generalised to the case of a nonsmooth domain requiring the existence of a function $\rho\in C^2(\R^d)$ such that 
\begin{equation}
\label{eq:cond_general}
\begin{split}
& \rho = 0,\; D\rho\neq 0 \text{ on } \partial \Omega, \quad \rho>0  \text{ on } \Omega;\\
& \text{there exists $\eta>0$ and a neighborhood $\Omega'$ of $\partial\Omega$ such that }\\
& b^\alpha_i D\rho(x)  + tr[a^\alpha_i D^2\rho(x)]\leq -\eta \qquad \text{in } (0,T]\times \Omega', \forall i\in\mathcal I, \alpha\in\mathcal A_i.
\end{split}
\end{equation} 

According to the analysis in \cite{barles1998strong} (resp.\ \cite{chaumont2004uniqueness}), condition \eqref{eq:cond_dist} (resp.\ \eqref{eq:cond_general}) implies that for any viscosity solution of \eqref{eq:A1}-\eqref{eq:BCA1} the Dirichlet boundary condition is satisfied pointwise (compare \eqref{eq:cond_dist} with the conditions defining the set $\Gamma_{\!\mathrm{out}}$ in \cite{barles1998strong} and \cite{chaumont2004uniqueness}).
In particular, considering the dynamics of the control problem associated with \eqref{eq:A1}-\eqref{eq:BCA1}, conditions  \eqref{eq:cond_dist} and \eqref{eq:cond_general} imply that  in a neighborhood of the boundary all trajectories are pushed outside $\Omega$.

However, condition (A2) precludes {some simple cases where the non-smoothness of the boundary is incompatible with regular solutions to the PDE,
such as the heat equation on the unit square. Here,} the zero boundary conditions dictate that all derivatives vanish in the corners,
which implies there cannot be a function $\zeta \in C^2(\overline{\Omega})$ which satisfies \eqref{zeta_inequ}.
\end{remark}


\medskip
\noindent {\bf (A3)} {\bf (Boundary condition)}
$\Psi_1\in C^{1,2}(\overline Q_T)$. Moreover, there exists a constant $C_1>0$ such that  
\begin{eqnarray}\label{eq:psi0psi1}
|\Psi_0-\Psi_1(0,\cdot)| \leq C_1 \zeta(0,\cdot)  && \text{ on } 
\overline \Omega.
\end{eqnarray}

\begin{remark}\label{rem:boundA}
Assumption (A3) is a compatibility condition between the initial and boundary data.
The regularity assumption on $\Psi_1$ and $\zeta$ and the boundedness of the domain $\overline Q_T$ imply the boundedness of $\Psi_1$ and $\zeta$ together with their derivatives. This fact will be used strongly for the construction of smooth sub- and supersolutions.
\end{remark}
}

\subsection{Preliminary results}\label{sec:switch_prelims}

We seek to establish  existence, uniqueness, and regularity results for system \eqref{eq:A1}--\eqref{eq:BCA1}. Equation \eqref{E}--\eqref{BC} represents a particular case of \eqref{eq:A1}--\eqref{eq:BCA1}
when $|\mathcal I|=1$, hence this section also contains all the elements necessary for the study of \eqref{E}--\eqref{BC}.

\begin{theorem}[\,{\bf Maximum principle}\textsf{\,}]
\label{thm:WP}
Let assumption (A1) be satisfied. 
If $u\in USC(\overline Q_T;\R^M)$ is a subsolution of \eqref{eq:A1} 
and 
$v\in LSC(\overline Q_T;\R^M)$ a supersolution, 
then $u-v\leq \underset{\partial^*Q_T}\sup (u-v)$ in $\overline Q_T$.
\end{theorem}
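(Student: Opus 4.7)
I argue by contradiction via a doubling-of-variables argument adapted to the switching structure. Set
\[
\Sigma := \max_{i \in \mathcal{I}} \sup_{\overline{Q}_T}(u_i - v_i), \qquad \Sigma_0 := \max_{i \in \mathcal{I}} \sup_{\partial^* Q_T}(u_i - v_i),
\]
and suppose for contradiction that $\Sigma > \Sigma_0$; by a standard shift argument one may further assume $\Sigma > 0$. After the substitution $u_i \mapsto e^{-\lambda t} u_i$, $v_i \mapsto e^{-\lambda t} v_i$ with $\lambda$ strictly larger than $\sup_{i,\alpha}|c^\alpha_i|_0$, the effective zero-order coefficient $c^\alpha_i - \lambda$ is strictly negative and the obstacle constant becomes $ke^{-\lambda t} > 0$, so the form of \eqref{eq:defFi}--\eqref{eq:defM} is preserved. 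For small $\varepsilon,\eta,\nu>0$ consider
\[
\Phi_i(t,x,s,y) := u_i(t,x) - v_i(s,y) - \frac{|x-y|^2 + (t-s)^2}{2\varepsilon} - \eta(t+s) - \frac{\nu}{T-t} - \frac{\nu}{T-s}.
\]
Upper/lower semicontinuity on the compact product $\overline{Q}_T \times \overline{Q}_T$ and the finiteness of $\mathcal{I}$ yield a maximiser $(i_\varepsilon;t_\varepsilon,x_\varepsilon,s_\varepsilon,y_\varepsilon)$, and the standard penalisation estimates (Lemma~3.1 of \cite{crandall1992user}) give $\varepsilon^{-1}(|x_\varepsilon - y_\varepsilon|^2 + (t_\varepsilon-s_\varepsilon)^2)\to 0$ and, along a subsequence, $(t_\varepsilon,x_\varepsilon),(s_\varepsilon,y_\varepsilon)\to(\hat t,\hat x)\in Q_T$ with $i_\varepsilon\equiv i^*\in\mathcal{I}$. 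For $\eta,\nu$ sufficiently small relative to $\Sigma - \Sigma_0$, the value of $\Phi_{i^*}$ at the maximiser strictly exceeds $\Sigma_0$, so $(\hat t,\hat x)\in(0,T)\times\Omega$; the $\nu$-penalty forces $t_\varepsilon,s_\varepsilon<T$.

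\textbf{Index selection (main difficulty).} The essential new step over a scalar parabolic comparison is to identify a component $i_0$ at which the PDE branch of $F_{i_0}$ is active for both $u$ and $v$. At the limit maximiser, define
\[
I^* := \bigl\{i\in\mathcal{I} : (u_i - v_i)(\hat t,\hat x) = \max_j(u_j - v_j)(\hat t,\hat x)\bigr\}
\]
and pick $i_0\in I^*$ minimising $v_{i_0}(\hat t,\hat x)$. I claim that the supersolution property at $v_{i_0}$ cannot be realised through the obstacle branch: otherwise $v_{i_0}(\hat t,\hat x)\ge v_j(\hat t,\hat x)+ke^{-\lambda\hat t}$ for some $j\ne i_0$, and the subsolution inequality for $u_{i_0}$ would give $u_{i_0}(\hat t,\hat x)\le u_j(\hat t,\hat x)+ke^{-\lambda\hat t}$, so that $(u_{i_0}-v_{i_0})(\hat t,\hat x)\le(u_j-v_j)(\hat t,\hat x)$, forcing $j\in I^*$; but then $v_j(\hat t,\hat x)\le v_{i_0}(\hat t,\hat x) - ke^{-\lambda \hat t} < v_{i_0}(\hat t,\hat x)$ contradicts the minimising choice of $v_{i_0}$. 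Hence the PDE branch is realised at $v_{i_0}$, and because a subsolution of $F_{i_0}\le 0$ satisfies both branches simultaneously, the PDE branch is automatically realised at $u_{i_0}$. By semicontinuity the selection transfers to the pre-limit maximisers for small $\varepsilon>0$.

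\textbf{Crandall--Ishii and contradiction.} Apply the parabolic Crandall--Ishii lemma (Theorem~8.3 in \cite{crandall1992user}) to $\Phi_{i_0}$ at its maximiser to produce parabolic semijets with time components $a_\varepsilon+\eta+\nu(T-t_\varepsilon)^{-2}$ and $a_\varepsilon-\eta-\nu(T-s_\varepsilon)^{-2}$ for $u_{i_0}$ and $v_{i_0}$, common spatial slope $p_\varepsilon := (x_\varepsilon-y_\varepsilon)/\varepsilon$, and matrices $X_\varepsilon,Y_\varepsilon$ satisfying the usual trace estimate
\[
\tr\bigl[a^\alpha_{i_0}(t_\varepsilon,x_\varepsilon)X_\varepsilon - a^\alpha_{i_0}(s_\varepsilon,y_\varepsilon)Y_\varepsilon\bigr] \le C\varepsilon^{-1}|x_\varepsilon-y_\varepsilon|^2
\]
uniformly in $\alpha$, using $|\sigma^\alpha_i|_1\le C_0$ from (A1). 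For $\delta>0$, choose $\alpha^*\in\mathcal{A}_{i_0}$ within $\delta$ of the supremum in the super-PDE branch, write the sub-PDE inequality at the same $\alpha^*$, and subtract: the matrix, drift and source residuals are $o_\varepsilon(1)$ by (A1) and the penalisation estimates, while the zero-order difference converges to $-(c^{\alpha^*}_{i_0}(\hat t,\hat x)-\lambda)(u_{i_0}-v_{i_0})(\hat t,\hat x)\le 0$, since the shifted $c$ is strictly negative and $(u_{i_0}-v_{i_0})(\hat t,\hat x)=\Sigma>0$. One ends up with $2\eta + 2\nu T^{-2} \le \delta + o_\varepsilon(1)$; sending $\varepsilon\to 0$, then $\delta\to 0$, and finally $\eta,\nu\to 0^+$ yields the required contradiction.

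The chief obstacle is the index-selection step: because $\mathcal{M}_i$ couples the components, a naive doubling on a single component cannot close, as the supersolution inequality may be realised by the obstacle branch there; the minimising choice of $i_0\in I^*$ is the classical switching-system trick that reduces the comparison to a scalar parabolic one, after which the rest is standard. A secondary, routine technicality is confining the maximiser to the interior of $Q_T$, which is handled by the $\eta$- and $\nu$-penalties above.
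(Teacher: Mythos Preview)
Your proof is correct and follows essentially the same route as the paper: contradiction, doubling of variables, the index-selection trick for the switching structure (which you spell out explicitly --- this is precisely the content of Lemma~A.2 of Barles--Jakobsen, cited in the paper as Lemma~\ref{lem:Mishii}), and the parabolic Crandall--Ishii lemma with the standard Lipschitz estimates from (A1). The differences are cosmetic: you double the time variable and use an exponential rescaling to force a strictly negative zero-order coefficient, whereas the paper keeps a single time variable with a $\rho/(T-t)$ penalty and obtains the contradiction via Lemma~\ref{lm:defModLi}; also, you perform the index selection at the \emph{limit} point and then transfer it back by semicontinuity, which is slightly less direct than the paper's approach of invoking Lemma~\ref{lem:Mishii} before (or at) the doubled maximiser --- the cleaner fix is simply to run your minimising-$v_{i_0}$ argument at the doubled point $(t_\varepsilon,x_\varepsilon,s_\varepsilon,y_\varepsilon)$, where the semijets are already available and no transfer is needed.
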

\begin{proof}  
We adapt the proof of Theorem 8.2 in \cite{crandall1992user} using the parabolic version of 
the Crandall-Ishii lemma \cite{crandallIshii}, {
see Theorem \ref{thm:maxprinc} in the appendix for convenience}. 

We assume by contradiction that 
$$u_i(s, z)- v_i(s, z) > \underset{\II\times\partial^*Q_T}\sup (u_j-v_j)$$
for some $(i, s, z) \in \II \times \overline Q_T$. 
We start by noticing that for any  $\rho > 0$, $u^\rho = u - \rho/(T-t)$ is a subsolution of \eqref{eq:A1}. For $\rho$ small enough, we can define $(\bar i, \bar t, \bar x)\in \II\times Q_T$ such that
$$
u^\rho_{\bar i}(\bar t,\bar x) -v_{\bar i}(\bar t, \bar x)= \underset{\II\times\overline Q_T}\sup (u^\rho_i-v_i)>\underset{\II\times\partial^*Q_T}\sup (u^\rho_i-v_i).
$$  
For some $\beta > 0$ consider the auxiliary function
\begin{align*}
\Phi (i, t, x, y) = u^\rho_i(t, x) - v_i(t, y) - \beta |x-y|^2 , \qquad t \in [0, T], x, y \in \overline{\Omega}.
\end{align*}
Let $(\hat t,\hat x, \hat y)$\footnote{We omit the dependence of $(\hat{t}, \hat{x}, \hat{y})$ on $\bar i$ and $\beta$ for brevity; explicitly we have $\left(\hat{t}^\beta_{\bar i}, \hat{x}^\beta_{\bar i}, \hat{y}^\beta_{\bar i}\right)$.} 
be a maximum point for $\Phi(\bar i,\cdot,\cdot,\cdot)$. By standard arguments in viscosity theory (see Lemma 3.1 in \cite{crandall1992user}), for $\beta$ big enough we have that  $(\hat{t}, \hat{x}, \hat{y}) \in (0, T) \times \Omega \times \Omega$ and $\beta |\hat{x}-\hat{y}|^2 \to 0$. 

Moreover, by Lemma A.2 in \cite{barles2007error} (see Lemma \ref{lem:Mishii} in the appendix),  there exists $\hat{i} \in \II$ such that $(\hat{i}, \bar{t}, \bar{x})$ is still a maximum point for $u^\rho-v$ and, in addition, $v_{\hat{i}}(\bar{t}, \bar{y}) < \mathcal{M}_{\hat{i}} v(\bar{t}, \bar{y})$. Then, for $\beta$ big enough we can also say that
$v_{\hat{i}}(\hat{t}, \hat{y}) < \mathcal{M}_{\hat{i}} v(\hat{t}, \hat{y})$. 
Now we can make use of the Crandall-Ishii lemma,
Theorem \ref{thm:maxprinc},
with $u^\rho_{\hat{i}}$, $v_{\hat{i}}$ and
$$\phi(t, x, y) = \beta |x-y|^2,$$
to infer that there are numbers $a$, $b$ and symmetric matrices $X, Y \in \mathcal{S}^d$ such that 
\begin{align*}
(a, \beta (\hat{x} - \hat{y}), X) \in \overline{\mathcal{P}}^{2, +}u^\rho_{\hat i}(\hat t, \hat x), \qquad 
(b, \beta (\hat{x} - \hat{y}), Y) \in \overline{\mathcal{P}}^{2, -}v_{\hat i}(\hat t, \hat y),
\end{align*}
satisfying 
\begin{align*}
a-b = 0, \quad \text{and}\quad -3 \beta \begin{pmatrix}
I & 0 \\
0 & I
\end{pmatrix}  \leq \begin{pmatrix}
X & 0 \\
0 & -Y
\end{pmatrix} \leq 3 \beta \begin{pmatrix}
I & -I \\
-I & I
\end{pmatrix}.
\end{align*}

By the sub- and supersolution property of $u^\rho$ and $v$ we have that
\begin{align*}
a + \sup_{\alpha \in \A_{\hat i}} \mathcal{L}^\alpha_{\hat i} (\hat{t}, \hat{x}, u^\rho(\hat{t}, \hat{x}), \beta (\hat{x} - \hat{y}), X) &\leq -\frac{\rho}{T^2}, \\
b + \sup_{\alpha \in \A_{\hat i}} \mathcal{L}^\alpha_{\hat i} (\hat{t}, \hat{y}, v(\hat{t}, \hat{y}), \beta (\hat{x} - \hat{y}), Y) &\geq 0.
\end{align*}
Subtracting these two inequalities and using Lemma V.7.1 in \cite{fleming2006controlled} (stated as Lemma \ref{lm:defModLi} in the appendix) 
we have that
\begin{align*}
\frac{\rho}{T^2} &\leq \sup_{\alpha \in \A_{\hat i}} \mathcal{L}^\alpha_{\hat i} (\hat{t}, \hat{y}, v(\hat{t}, \hat{y}), \beta (\hat{x} - \hat{y}), Y) - \sup_{\alpha \in \A_{\hat i}} \mathcal{L}^\alpha_{\hat i} (\hat{t}, \hat{x}, u^\rho(\hat{t}, \hat{x}), \beta (\hat{x} - \hat{y}), X) \\
&\leq \omega(\beta |\hat{x}-\hat{y}|^2 + |\hat{x}-\hat{y}|) \to 0,
\end{align*}
which leads to a contradiction for $\beta \to \infty$, as $\rho>0$, and concludes the proof. 
\end{proof}


\smallskip
Before we prove the main existence and uniqueness result, we show how the initial datum can be replaced by a smoothed version satisfying the same assumptions as the original.

\begin{lemma}[\,{\bf  Smoothing of initial data}\,]\label{lem:smoothing}
Let $\Psi_0 \in C^0_1(\overline\Omega)$ such that $|\Psi_0|_1 \le C_0$ and $\zeta, \Psi_1 \in C^{1,2}(\overline Q_T)$ such that (\ref{eq:zeta0}), (\ref{eq:zeta1}), and (\ref{eq:psi0psi1}) are satisfied for some $C_1>0$. Then for any $0<\e<1$ small enough there exists $\Psi_{\e} \in C^2(\overline\Omega)$ which satisfies the same conditions as $\Psi_0$, with $C_1$ replaced by some $\overline C_1$ independent of $\e$, and where
\begin{eqnarray}
\label{eq:psie}
|\Psi_0-\Psi_\e| \le C \e
\end{eqnarray}
for some $C>0$ independent of $\e$.
\end{lemma}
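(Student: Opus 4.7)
The plan is to build $\Psi_\e$ by mollifying a suitable extension of $\Psi_0$ and then cutting off near the boundary using a function derived from the barrier $\zeta$. Set $f \defeq \Psi_0 - \Psi_1(0, \cdot)$. By continuity of $\zeta$ and \eqref{eq:zeta1}, $\zeta(0, \cdot) = 0$ on $\partial \Omega$, so \eqref{eq:psi0psi1} forces $f = 0$ on $\partial \Omega$. Hence $f$ extends by zero to a Lipschitz function $\tilde f$ on $\R^d$, with $[\tilde f]_1 \le C_0 + |D\Psi_1|_0$. Let $g_\e \defeq \tilde f * \rho_\e \in C^\infty(\R^d)$ be the spatial mollification obtained from \eqref{eq:rhoxt}.

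The naive candidate $\Psi_1(0,\cdot) + g_\e$ is smooth and uniformly $\e$-close to $\Psi_0$, but it fails the compatibility \eqref{eq:psi0psi1} near $\partial \Omega$: mollification smears $\tilde f$ across $\partial \Omega$, so $g_\e$ vanishes there only at rate $\e$ whereas $\zeta(0, \cdot)$ vanishes at the boundary itself. To fix this, I would pick $\psi \in C^\infty(\R; [0, 1])$ with $\psi(s) = 0$ for $s \le 1$ and $\psi(s) = 1$ for $s \ge 2$, and set $\chi_\e \defeq \psi(\zeta(0, \cdot)/\e)$. Then $\chi_\e \in C^2(\overline \Omega)$, vanishes on $\{\zeta(0, \cdot) \le \e\}$ (in particular near $\partial \Omega$), and equals $1$ on $\{\zeta(0, \cdot) \ge 2\e\}$. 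Define
\[
\Psi_\e \defeq \Psi_1(0, \cdot) + \chi_\e\, g_\e \qquad \text{on } \overline \Omega.
\]

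The verifications would proceed as follows. Regularity $\Psi_\e \in C^2(\overline \Omega)$ is immediate from the smoothness of each factor. For \eqref{eq:psie}, I would split into the region $\{\chi_\e = 1\}$, where $|\Psi_\e - \Psi_0| = |g_\e - f| \le [f]_1 \e$ by the standard mollifier estimate, and the strip $\{\zeta(0, \cdot) < 2\e\}$, where $|f| \le C_1 \zeta(0, \cdot) \le 2 C_1 \e$ and $|g_\e| \le C \e$ by a pointwise Lipschitz argument. A Lipschitz bound on $\Psi_\e$ uniform in $\e$ follows similarly: on the cutoff strip the $O(\e^{-1})$ growth of $D \chi_\e$ is compensated by the $O(\e)$ size of $g_\e$, and elsewhere standard convolution estimates apply.

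The crux of the argument, and the step I expect to be the main obstacle, is the compatibility inequality $|\chi_\e g_\e| \le \overline C_1 \zeta(0, \cdot)$. Where $\chi_\e = 0$ it is trivial. Elsewhere, $\zeta(0, x) \ge \e$, and combining $|\tilde f(y)| \le C_1 \zeta(0, y)$ on $\overline \Omega$ with the first-order Taylor bound $\zeta(0, y) \le \zeta(0, x) + |D \zeta|_0 |y - x|$ yields
\[
|g_\e(x)| \le C_1 \int_{\overline \Omega \cap B_\e(x)} \zeta(0, y) \rho_\e(x - y) \diff y \le C_1 \bigl( \zeta(0, x) + |D \zeta|_0 \e \bigr) \le C_1 \bigl(1 + |D \zeta|_0\bigr) \zeta(0, x),
\]
which gives the claim with $\overline C_1 \defeq C_1 (1 + |D \zeta|_0)$. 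The essential benefit of defining the cutoff directly from $\zeta$ is that it sidesteps any non-degeneracy assumption on the barrier at $\partial \Omega$ and keeps $\overline C_1$ independent of $\e$.
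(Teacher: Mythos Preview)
Your construction is correct and runs parallel to, but is genuinely different from, the paper's. Both proofs reduce to $f=\Psi_0-\Psi_1(0,\cdot)$ and both use level sets of $\zeta_0:=\zeta(0,\cdot)$ to carve out a boundary strip on which the smoothed function is forced to vanish. The paper, however, truncates first in the \emph{range}: it replaces $f$ by $\widehat\Psi_0:=((f)^+ - 2|\zeta|_1\e)^+ + ((f)^- + 2|\zeta|_1\e)^-$, which vanishes on $\{\zeta_0 \le 2|\zeta|_1\e\}$, extends by zero, and only then mollifies. You instead mollify first and afterwards multiply by a smooth spatial cutoff $\chi_\e=\psi(\zeta_0/\e)$. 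The paper's ordering has the mild advantage that both vertical truncation and convolution preserve the Lipschitz seminorm exactly, so one recovers $[\Psi_\e]_1\le C_0$ with the original constant (after the WLOG reduction $\Psi_1(0,\cdot)=0$). Your cutoff contributes an extra term $|D\chi_\e|\,|g_\e|=O(\e^{-1})\cdot O(\e)$ to the gradient, so your Lipschitz bound picks up factors of $|D\zeta|_0$, $|\psi'|_\infty$ and $C_1$; this is still uniform in $\e$, which is all the subsequent application (the Perron argument in Theorem~\ref{thm:exist-unique}) actually needs, but it does not literally reproduce the constant $C_0$. Conversely, your verification of the compatibility inequality $|\chi_\e g_\e|\le \overline C_1\zeta_0$ is slightly cleaner: the one-line estimate $|g_\e(x)|\le C_1(\zeta_0(x)+|D\zeta|_0\e)\le C_1(1+|D\zeta|_0)\zeta_0(x)$ on $\{\zeta_0\ge\e\}$ avoids the bookkeeping with the nested sets $\Omega_{2\e}\subset\Omega_\e\subset\Omega^\e$ that the paper needs to ensure the mollified truncation still vanishes on $\Omega\setminus\Omega_\e$.
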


\begin{proof}
Without loss of generality consider $\Psi_1(0,\cdot)=0$, 
else we replace $\Psi_0$ by $\Psi_0-\Psi_1(0,\cdot)$ and because $\Psi_1(0,\cdot) \in C^2(\overline \Omega)$ the result follows.
Also assume $C_1=1$ by appropriate scaling of $\zeta$.

Set
\begin{eqnarray}
\label{psihat}
\widehat \Psi_0 &:=& \big((\Psi_0)^+-2 |\zeta|_1 \e \big)^+ + \big((\Psi_0)^-+2 |\zeta|_1 \e \big)^-,
\end{eqnarray}
extended by 0 outside $\Omega$ and where $(x)^\pm$ is the positive/negative part of $x$, respectively. We will show in the remainder that 
\begin{eqnarray*}
\Psi_\epsilon &:=& \widehat \Psi_0*\rho_\eps
\end{eqnarray*}
has the desired properties.

We define for any $\varepsilon>0$
\begin{equation}\label{eqdef:omegaeps}
\Omega_\e := \{x\in\Omega : \zeta_0 > |\zeta|_1 \e \} \subset \Omega^\e:=\{x\in \Omega : d(x)>\e\},
\end{equation}
where {
$\zeta_0(\cdot):=\zeta(0,\cdot)$} and $d$ denotes the distance function to $\partial\Omega$. 

The following properties of $\widehat \Psi_0$ follow directly:
\begin{eqnarray}
\label{eq:psizero}
\widehat \Psi_0 &=& 0 \qquad \qquad \text{in } \; \Omega \backslash \Omega_{2\e}, \\
\label{eq:psihat}
|\widehat \Psi_0 - \Psi_0| &\le& 2 |\zeta|_1 \e, \\
|\widehat \Psi_0|_1 &\le & C_0.
\label{eq:psilip}
\end{eqnarray}
Here, the first property holds because $|\Psi_0| \le {
\zeta_0} \le 2 |\zeta|_1 \e$ in $\Omega \backslash \Omega_{2\e}$ by definition, the 
second and third because $\widehat \Psi_0$ results from a constant vertical shift of $\Psi_0$ up or down whenever $\Psi_0$ is smaller than $-2 |\zeta|_1 \e$
or larger than $2 |\zeta|_1 \e$, respectively, and zero otherwise.

By standard properties of mollifiers, $|\Psi_\e - \widehat \Psi_0| \le C_0 \e$ 
and therefore (\ref{eq:psie}) follows 
with $C= C_0 +  2 |\zeta|_1$, using (\ref{eq:psihat}).

To show (\ref{eq:psi0psi1}), we note that in $\Omega_\e$ from $|\Psi_0|\le \zeta_0$ follows $|\widehat \Psi_0| \leq \zeta_0$  and $|\Psi_\e| \le \zeta_\e := \zeta_0 * \rho_\e$.
Hence, as $\zeta_0 \ge |\zeta|_1 \e$ in $\Omega_\e$ by definition and $\zeta_\e \le \zeta_0 + \e$ by properties of mollifiers, one has $|\Psi_\e| \le \zeta_0 (1+1/|\zeta|_1)$.
In $\Omega\backslash \Omega_\e$, $\Psi_\e = 0$ by (\ref{eq:psizero}) and hence trivially $|\Psi_\e| \le \zeta_0$.
Therefore, (\ref{eq:psi0psi1}) holds in all of $\Omega$ with $C_1$ replaced by $\overline C_1 := 1+1/|\zeta|_1$.
Finally, from (\ref{eq:psilip}) follows also $[\Psi_\e]_1 \le C_0$, which concludes the proof.
\end{proof}

\smallskip

The following theorem gives existence and uniqueness and provides an important control of the solution to \eqref{eq:A1}--\eqref{eq:BCA1} in a neighbourhood of $\partial\Omega$  for proving the regularity result.

\begin{theorem}[\,{\bf Existence and uniqueness}\,]
\label{thm:exist-unique}
Assume (A1), (A2) and (A3) hold. 
Then, there exists a unique continuous viscosity solution $u=(u_1,\ldots, u_M)$ to \eqref{eq:A1}--\eqref{eq:BCA1}.
Moreover, there exists a constant $K>0$ (independent of $M$) such that 
\begin{equation}\label{eq:barrierEstSW}
|u_i(t,x)-\Psi_1(t,x)|\leq K \zeta(t,x), \quad \forall\, (t,x) \in \overline Q_T, \; i\in \cI,
\end{equation}
where $\zeta$ is the function in assumption (A2). 
\end{theorem}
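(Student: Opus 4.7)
The plan is to apply Perron's method using a pair of classical sub- and supersolutions built from the boundary data $\Psi_1$ and the barrier $\zeta$, and then collapse the Perron USC/LSC envelopes to a single continuous function via the maximum principle (Theorem~\ref{thm:WP}). The barrier estimate \eqref{eq:barrierEstSW} will fall out as the sandwich between the sub- and supersolution, and uniqueness will follow directly from Theorem~\ref{thm:WP}.

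First I would fix a large constant $K>0$, to be chosen in terms of $C_0$, $C_1$, $|\Psi_1|_{2,1}$ and $|\zeta|_{2,1}$ only, and set, independently of $i\in \cI$,
\[
\underline u_i(t,x) \defeq \Psi_1(t,x) - K\zeta(t,x), \qquad \overline u_i(t,x) \defeq \Psi_1(t,x) + K\zeta(t,x).
\]
Both functions lie in $C^{1,2}(\overline Q_T)$ by (A3) and the regularity of $\zeta$. Using the linearity of $\mathcal L^\alpha_i$ in $(r,q,X)$ together with \eqref{zeta_inequ}, I would verify
\[
\partial_t \overline u_i + \mathcal L^\alpha_i\bigl(t,x,\overline u_i, D\overline u_i, D^2 \overline u_i\bigr) \;\geq\; K + R^\alpha_i
\]
with $R^\alpha_i$ uniformly bounded via (A1) and $\Psi_1 \in C^{1,2}$, so choosing $K$ above that bound makes the PDE part of $F_i$ nonnegative; the switching part $\overline u_i - \mathcal M_i \overline u = -k$ is strictly negative but harmless since $F_i$ is a maximum. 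On $(0,T]\times\partial\Omega$, $\zeta\equiv 0$ yields $\overline u_i = \Psi_1$, and at $t=0$, (A3) with $K\geq C_1$ gives $\overline u_i\geq \Psi_0$. The mirror-image computation shows $\underline u$ is a subsolution with $\underline u\leq \Psi_0$ at $t=0$ and $\underline u=\Psi_1$ on the lateral boundary, the switching inequality $\underline u_i - \mathcal M_i \underline u = -k\leq 0$ being automatic.

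Next I would run Perron's method componentwise, setting
\[
u_i(t,x) \defeq \sup\bigl\{w_i(t,x) : w = (w_1,\dots,w_M) \text{ viscosity subsolution of \eqref{eq:A1}--\eqref{eq:BCA1} with } \underline u \leq w \leq \overline u\bigr\}.
\]
Because $\mathcal M_i$ is monotone in each $w_j$ and obeys $\sup_\lambda \min_j \leq \min_j \sup_\lambda$, the switching constraint is preserved under pointwise suprema, so Ishii's standard argument adapts componentwise to show that $u^*$ is a subsolution and $u_*$ a supersolution of the system. To force the envelopes to coincide at the parabolic boundary I would invoke Lemma~\ref{lem:smoothing}: replacing $\Psi_0$ by the $C^2$ approximation $\Psi_\eps$ (which by construction agrees with $\Psi_1(0,\cdot)$ on $\partial\Omega$) allows me to refine the pair into $C^{1,2}$ sub/supersolutions $(\underline u^\eps,\overline u^\eps)$ that literally match $\Psi_\eps$ at $t=0$ and $\Psi_1$ on the lateral boundary. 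Perron's method applied to this pinched pair, combined with Theorem~\ref{thm:WP}, yields $u^{\eps*}=u^\eps_*=:u^\eps$ continuous and a viscosity solution of the smoothed system. Comparing $u^\eps$ with the unperturbed pair $\Psi_1\pm K\zeta$ via Theorem~\ref{thm:WP} gives $|u^\eps-\Psi_1|\leq K\zeta + O(\eps)$, with $K$ independent of $\eps$ and of $M$. A standard stability/Arzel\`a--Ascoli argument as $\eps\to 0$ then extracts a continuous limit $u$ which is a viscosity solution of \eqref{eq:A1}--\eqref{eq:BCA1} and satisfies \eqref{eq:barrierEstSW}. Uniqueness is immediate from Theorem~\ref{thm:WP}, since two continuous solutions coincide on $\partial^* Q_T$.

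The main technical obstacle is closing the initial-time gap in the Perron envelopes: the naive (non-pinching) pair $\Psi_1\pm K\zeta$ leaves a positive gap $2K\zeta(0,\cdot)$ on $\{0\}\times\Omega$ that Theorem~\ref{thm:WP} cannot eliminate on its own. The smoothing lemma is tailor-made for this, producing an initial trace that is both $C^2$ (enabling a $C^{1,2}$ pinched pair) and compatible with $\Psi_1(0,\cdot)$ on $\partial\Omega$ (so the pinched pair satisfies the full lateral boundary condition). A secondary subtlety is ensuring that the constant $K$ in the barrier estimate stays uniform in $\eps$ and independent of $M$; this works because the comparison is made against the fixed pair $\Psi_1\pm K\zeta$, whose constant depends only on $C_0$, $C_1$, $|\Psi_1|_{2,1}$ and $|\zeta|_{2,1}$, all of which are $M$-independent.
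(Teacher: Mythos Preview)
Your overall strategy coincides with the paper's: build classical sub/supersolutions from $\Psi_1\pm K\zeta$, run Perron's method, handle the Lipschitz initial datum by smoothing via Lemma~\ref{lem:smoothing}, and pass to the limit. The barrier estimate and uniqueness are then immediate from Theorem~\ref{thm:WP}, exactly as the paper argues.

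There is, however, a genuine gap in your sketch. You correctly identify that $\Psi_1\pm K\zeta$ leaves a gap $2K\zeta(0,\cdot)$ at $t=0$, and you propose to ``refine the pair into $C^{1,2}$ sub/supersolutions $(\underline u^\eps,\overline u^\eps)$ that literally match $\Psi_\eps$ at $t=0$''. But you never say how. Having $\Psi_\eps\in C^2$ does not by itself hand you such a pair, and this is precisely the point where the construction requires a second ingredient. The paper introduces the auxiliary functions
\[
f_2(t,x)=e^{\lambda t}\bigl(\Psi_0(x)-K_2 t\bigr),\qquad g_2(t,x)=e^{\lambda t}\bigl(\Psi_0(x)+K_2 t\bigr),
\]
with $\lambda=\sup_{i,\alpha}|c^{\alpha,+}_i|_0$, checks that for $K_2$ large (depending on second derivatives of $\Psi_0$, hence the need for $C^2$ data) these are classical sub/supersolutions of \eqref{eq:A1}, and then takes $f=\max\{f_1,f_2\}$, $g=\min\{g_1,g_2\}$ with $f_1,g_1=\Psi_1\mp K_1\zeta$. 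This max/min pair is what pins down both the initial trace ($f_2(0,\cdot)=g_2(0,\cdot)=\Psi_0$) and the lateral trace ($f_1=g_1=\Psi_1$ there), and it is the step your proposal is missing.

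A secondary point: your ``standard stability/Arzel\`a--Ascoli argument'' for $\eps\to 0$ needs uniform-in-$\eps$ equicontinuity of $u^\eps$. The paper obtains this by a forward reference to the regularity result Theorem~\ref{thm:WPmain} (applied to the smoothed problems, with constants independent of $\eps$), and then uses half-relaxed limits rather than Arzel\`a--Ascoli to produce the sub- and supersolution attaining the data on $\partial^* Q_T$. Either route works, but you should make explicit where the required compactness comes from.
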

\begin{proof} 
Uniqueness follows by Theorem \ref{thm:WP}. Existence can be proved by Perron's method 
as shown in the context of elliptic equations in \cite[Theorem 4.1]{ishiiKoikeSwitching}.
In the present setting, we have to construct
a lower semicontinuous function $f$ and an upper semicontinuous function $g$ which are, respectively, a sub- and supersolution of \eqref{eq:A1} and satisfy $f=g=\Psi_0$ in $\{0\}\times \overline \Omega$ and $f=g=\Psi_1$ in $(0,T]\times\partial\Omega$.

 We first assume that $\Psi_0 \in C^2(\overline \Omega)$ and will reduce the case of Lipschitz $\Psi_0$ to this case by a regularisation argument at the end of the proof.

Let us start by constructing the subsolution. Let $\lambda:= \sup_{i,\alpha}|c^{\alpha,+}_i|_0$,
\begin{align*}
f_1(t,x):= \Psi_1(t,x) - K_1 \zeta(t,x)\quad\text{and}\quad f_2(t,x):=e^{\lambda t}\left( \Psi_0(x) - K_2\; t\right).
\end{align*}
Thanks to assumption (A2)--(A3), it is easy to verify that
\begin{align*}
f_1(t,x) = \Psi_1(t,x) \text{ on } (0,T]\times\partial\Omega\quad\text{and}\quad  f_2(0,x)= \Psi_0(x) \text{ on } \overline\Omega,
\end{align*}
and taking $K_1\geq C_1$ and $K_2\geq |\partial_t\Psi_1|_0$ one also has
\begin{align*}
f_1(0,x)\leq \Psi_0(x) \text{ on } \overline\Omega\quad\text{and}\quad  f_2(t,x) \leq \Psi_1(t,x) \text{ on } (0,T]\times\partial\Omega.
\end{align*}
Moreover, $(f_1,\ldots, f_1)$ and $(f_2,\ldots, f_2)$ are both subsolutions to \eqref{eq:A1} in $(0, T)\times \Omega$ for $K_1, K_2$ big enough. Indeed, 
$$
f_1 - \underset{j\neq i}\min\{f_1 +k\} = f_2 - \underset{j\neq i}\min\{f_2 +k\} = -k < 0,
$$
and for any $i\in\II$
\begin{align*}
& \partial_tf_1 + \underset{\alpha\in\mathcal A_i}\sup\mathcal L^\alpha_i(t,x,f_1,Df_1,D^2 f_1) \\
& \leq -K_1\zeta_t + K_1\sup_{\alpha\in\mathcal A_i}\Big\{b^\alpha_i(t,x)D\zeta+tr[a^\alpha_i(t,x)D^2\zeta] +c^\alpha_i(t,x)\zeta\Big\} +C\\
& \leq -K_1 + C\leq 0
\end{align*}
for $K_1\geq C$, where $C$ is a constant depending only on the bounds of $\Psi_1$ and its derivatives (see Remark \ref{rem:boundA}) and on the constant $C_0$ in assumption (A1).

To prove that $f_2$ is a viscosity subsolution, 
consider
\begin{align*}
& \partial_t f_2 + \underset{\alpha\in\mathcal A_i}\sup\mathcal L^\alpha_i(t,x, f_2,D f_2,D^2 f_2) \\
&= e^{\lambda t} \left( - K_2 (1+t)  e^{\lambda t} + \Psi_0 + \underset{\alpha\in\mathcal A_i}\sup\mathcal L^\alpha_i(t,x, \Psi_0,D \Psi_0,D^2 \Psi_0)  \right) \le 0
\end{align*}
for $K_2$ large enough which only depends on the constant $C_0$ in assumption (A1) and the derivatives up to order 2 of $\Psi_0$.

At this point, defining for any $(t,x)\in\overline Q_T$
$$
f(t,x):=\max\{f_1(t,x), f_2(t,x)\}
$$
one has 
\begin{equation}\label{eq:f_boundary}
f(t,x) = \Psi_1(t,x) \text{ on } (0,T]\times\partial\Omega\quad\text{and}\quad f(0,x) = \Psi_0(x) \text{ on } \overline\Omega. 
\end{equation}
Recalling that the maximum of viscosity subsolutions is  still a viscosity subsolution (see, e.g., \cite[page 26]{crandall1992user}),
we can conclude that $f$ is the desired function.\\
Analogously one can prove that, defining $g_1(t,x):=\Psi_1(t,x)+K_1\zeta(t,x)$ and $g_2(t,x):=e^{\lambda t}\left(\Psi_0(x)+K_2\; t\right)$,
the continuous function 
$$
g(t,x):=\left(\min\{g_1(t,x),g_2(t,x)\},\ldots,\min\{g_1(t,x),g_2(t,x)\}\right)
$$
is a viscosity supersolution to \eqref{eq:A1}--\eqref{eq:BCA1} satisfying the desired properties on $\partial^*Q_T$.
\\
It remains to prove \eqref{eq:barrierEstSW} which follows easily by Theorem \ref{thm:WP} taking $f_1$ and $g_1$ respectively as sub- and supersolution and $K=\max\{K_1,K_2\}$.

This concludes the proof of the theorem for $\Psi_0 \in C^2(\overline \Omega)$. We now deduce the result for Lipschitz $\Psi_0$.
 {
Let us consider  a sequence of solutions $u_n$ with smooth initial data $\Psi_{1/n}$ as provided by Lemma \ref{lem:smoothing}, and define 
$$
\!f(t,x)\!:=\!\underset{n\to\infty}{\lim\sup}^* u_n(t,x)\!:=\!\lim_{j\to\infty}\sup\Big\{u_n(s,y)\!: n\!>\!j, (s,y)\!\in\!\overline Q_T, |s-t|+|x-y|\!\leq\! \frac{1}{n}\Big\}.
$$
By standard stability arguments for viscosity solution (see \cite[Section 6]{crandall1992user}), $f$ is a viscosity subsolution to \eqref{eq:A1}. It remains to prove that $f$ satisfies the initial and boundary conditions. 
Under assumptions (A1), (A2) and (A3) and for $n$ big enough, the solutions $u_n$ are Lipschitz continuous in $x$ and H\"older-1/2 continuous in time with constants independent of $n$ (see Theorem \ref{thm:WPmain} below). Therefore, 
\begin{align*}
|f(0,x)-\Psi_0(x)|& \leq \lim_{j\to\infty} \underset{\substack{n>j, (s,y)\in \overline Q_T\\ |s|+|x-y|\leq \frac{1}{n}}} \sup\Big\{|u_n(s,y)-u_n(0,x)|+|\Psi_{1/n}(x)-\Psi_0(x)|\Big\}\\
& \leq \lim_{j\to\infty} \underset{\substack{n>j, (s,y)\in \overline Q_T\\ |s-t|+|x-y|\leq \frac{1}{n}}} \sup\Big\{C\Big(|s|^{1/2}+|x-y|+\frac{1}{n}\Big)\Big\} =0. 
\end{align*}
An analogous result holds for the boundary data $\Psi_1$. In conclusion, we have proved that $f$ is a viscosity  subsolution satisfying \eqref{eq:f_boundary}.  
Similarly,
$$g:=
\underset{n\to\infty}{\lim\inf}^* u_n
$$
can be shown to be a viscosity supersolution. 
}

\end{proof}

{
\begin{remark}
Theorem \ref{thm:exist-unique} extends previous work to the Cauchy-Dirichlet problem,
with added time evolution and initial data compared to \cite{ishiiKoikeSwitching} and added boundary data compared to \cite{barles2007error}.
On the other hand, we extend \cite{KrylovDong07Regular} from the HJB case to a switching system. Although we do not foresee any fundamental obstacles in extending the method
in \cite{KrylovDong07Regular} by stochastic representation results to our setting, we give a simpler, purely analytic proof by Perron's method.
\end{remark}
}

Using this result, we can now prove some important regularity properties for viscosity solutions of \eqref{eq:A1}--\eqref{eq:BCA1}. 
Some of the arguments are a straightforward adaptation of Theorem A.1 in \cite{barles2007error}, however, a separate and careful treatment at the boundary is necessary.

\begin{theorem}[\,{\bf Regularity of solutions}\,]
\label{thm:WPmain}
Assume (A1), (A2) and (A3) hold. \\
Let  $u$ be the solution to \eqref{eq:A1}--\eqref{eq:BCA1}, then   $u\in C^{0}_1(\overline{Q}_T)$, i.e.\ the space of bounded continuous functions with finite $|\cdot|_1$ norm,
and satisfies for all $t,s\in[0,T]$
\begin{align*}
e^{-\lambda t} \max_{i\in \cI} |u_i(t,\cdot)|_0\leq \ & 
\max_{j \in \{0, 1\}} |\Psi_{j}|_0 + t \sup_{i,\alpha} |\ell^{\alpha}_i|_0,
\end{align*}
where $\lambda \defeq \sup_{i,\alpha}|c^{\alpha +}_i|_0$,
\begin{align*}
e^{- \lambda_0 t} \max_{i\in \cI}\,[u_i(t,\cdot)]_1\leq \ & 
   [\Psi_{0}]_1+  |D\Psi_1|_0 +K|D\zeta|_0 + t  \sup_{i,\alpha,s} 
	\Big\{|u_i|_0[c^{\alpha}_i(s,\cdot)]_1+[\ell^{\alpha}_i(s,\cdot)]_1\Big\},
\end{align*}
where $\lambda_0 \defeq \sup_{i,\alpha,s} \{|c^{\alpha +}_i(s,\cdot)|_0
+[\sigma^{\alpha}_i(s,\cdot)]_1^2+[b^{\alpha}_i(s,\cdot)]_1\}$, and 
\begin{align*}
\max_{i\in \cI} |u_i(t,x)-u_i(s,x)| \leq \ &C|t-s|^{1/2},
\end{align*}
where
$C$ only depends on $T$, $C_0$
and $\bar{M} \defeq \sup_{i,t}|u_i(t,\cdot)|_1+|\partial_t\Psi_1|_0+2K|D\zeta|_0$.\\

\end{theorem}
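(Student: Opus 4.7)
I prove the three estimates in turn. The main tools are the comparison principle (Theorem \ref{thm:WP}), the switching-index selection (Lemma \ref{lem:Mishii}) and the parabolic Crandall--Ishii lemma (Theorem \ref{thm:maxprinc}); the barrier estimate \eqref{eq:barrierEstSW} is the essential new ingredient compared with the whole-space analysis in \cite{barles2007error}, allowing a clean handling of the lateral boundary.

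The uniform bound follows by comparison with the constant-in-$i$ $M$-tuples $(\pm W,\ldots,\pm W)$, where
$$
W(t)\defeq e^{\lambda t}\Bigl(\max_{j\in\{0,1\}}|\Psi_j|_0+t\sup_{i,\alpha}|\ell^\alpha_i|_0\Bigr).
$$
A direct computation shows that these are respectively a classical super- and subsolution of \eqref{eq:A1}, dominating or being dominated by the data on $\partial^*Q_T$; Theorem \ref{thm:WP} then yields the claim.

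For the spatial Lipschitz bound I adapt the doubling-of-variables argument from \cite{barles2007error}. Let $L(t)$ denote the right-hand side of the claimed inequality, and assume for a contradiction that $u_i(t,x)-u_i(t,y)-L(t)|x-y|>0$ for some $(i,t,x,y)$. Introduce the regularised function
$$
\Phi(i,t,x,y)\defeq u_i(t,x)-u_i(t,y)-L(t)|x-y|-\beta|x-y|^2-\tfrac{\rho}{T-t},
$$
whose positive maximum, for $\beta$ large, is attained at some $(\hat{i},\hat{t},\hat{x},\hat{y})$ with $\hat{t}\in(0,T)$ and $\beta|\hat{x}-\hat{y}|^2\to 0$. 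Two cases arise. If $\hat{x}$ or $\hat{y}$ lies on $\partial\Omega$, the barrier bound \eqref{eq:barrierEstSW} together with the $C^{1,2}$-regularity of $\Psi_1$ and $\zeta$ give
$$
u_i(\hat{t},\hat{x})-u_i(\hat{t},\hat{y})\leq\bigl(|D\Psi_1|_0+K|D\zeta|_0\bigr)\,|\hat{x}-\hat{y}|\leq L(\hat{t})|\hat{x}-\hat{y}|,
$$
contradicting $\Phi>0$. If both points are interior, Lemma \ref{lem:Mishii} selects an active index and Theorem \ref{thm:maxprinc} supplies matched semijets; subtracting the two viscosity inequalities and invoking the Lipschitz regularity of the coefficients in (A1) yields a differential inequality for $L$ that, by construction, is strictly satisfied, again a contradiction.

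The temporal H\"older-$1/2$ estimate is the principal obstacle: straightforward doubling in time is incompatible with the Dirichlet data unless a barrier corrector is inserted. The plan is to perform a combined doubling in time and space against the solution at a fixed time $s$. For $\e,\gamma>0$, consider
$$
\Theta(i,t,\tau,x,y)\defeq u_i(t,x)-u_i(\tau,y)-\tfrac{(t-\tau)^2}{\e}-\tfrac{|x-y|^2}{\e}-\gamma\zeta(t,x)-\tfrac{\rho}{T-t}
$$
on $\cI\times[s,T]^2\times\overline\Omega^2$, together with its sign-reversed analogue. Boundary maxima are ruled out as in the spatial step, using \eqref{eq:barrierEstSW} and the spatial Lipschitz bound from the previous step to absorb $|u_i(\tau,y)-\Psi_1(\tau,x)|$ via $\zeta$, while the corrector $\gamma\zeta$ forces the maximum away from $\partial\Omega$ for $\gamma$ large. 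At an interior maximum, the Crandall--Ishii lemma and the viscosity inequalities, combined with (A1), give $|\hat{t}-\hat\tau|\leq C\e$ with $C$ depending only on $\bar{M}$, $T$ and $C_0$. Fixing $\tau=s$ and optimising the resulting inequality in $\e$ yields $u_i(t,x)-u_i(s,x)\leq C'|t-s|^{1/2}$, and the reverse inequality follows by symmetric use of $\Theta$ with reversed signs.
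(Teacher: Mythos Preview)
Your treatment of the $L^\infty$ bound and the spatial Lipschitz estimate matches the paper: comparison with $(\pm W,\dots,\pm W)$ for the first, and doubling of spatial variables combined with the barrier estimate \eqref{eq:barrierEstSW} at the lateral boundary for the second (the extra $\beta|x-y|^2$ penalty you add is harmless but not needed).

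The time-regularity step, however, has a genuine gap and is \emph{not} the paper's route. You propose to double both time and space via $\Theta$ and to extract $|\hat t-\hat\tau|\le C\e$ at an interior maximum from the Crandall--Ishii lemma. This fails for two reasons. First, Theorem \ref{thm:maxprinc} is stated for a \emph{single} time variable; applying it with two time variables requires a different formulation. Second, and more fundamentally, even proceeding formally: at the maximum the spatial Hessian supplied by the test function $|x-y|^2/\e$ is $2I/\e$, so the subsolution inequality only yields
\[
\frac{2(\hat t-\hat\tau)}{\e}\;\le\;\sup_\alpha\mathrm{tr}\bigl[a^\alpha_i\,(2I/\e+\gamma D^2\zeta)\bigr]+O(1)\;\le\;\frac{C}{\e}+O(\gamma),
\]
hence merely $|\hat t-\hat\tau|\le C$, not $C\e$. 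Even granting your bound, it is unclear how it controls $\sup\Theta$: bounding $u_{\hat i}(\hat t,\hat x)-u_{\hat i}(\hat\tau,\hat y)$ by $|\hat t-\hat\tau|$ already presupposes time continuity, so the argument is circular. (As a side issue, the corrector $\gamma\zeta(t,x)$ only penalises $\hat x\in\partial\Omega$, not $\hat y$, and contributes an $O(\gamma)$ error.)

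The paper instead follows the mollification--comparison argument of \cite{barles2007error}, adapted to the bounded domain. Fixing $s<t$, one mollifies $u(s,\cdot)$ in space to obtain $u^\e_0\in C^\infty(\overline{\Omega^\e})$, checks that
\[
w^\pm_{\e,i}(t,x)=e^{\lambda(t-s)}\bigl(u^\e_{0,i}(x)\pm C_\e(t-s)\bigr),\qquad C_\e\sim C_0^2[u(s,\cdot)]_1\,\e^{-1},
\]
are classical sub/supersolutions of \eqref{eq:A1} on the \emph{restricted} cylinder $(s,T]\times\Omega^\e$, and applies Theorem \ref{thm:WP} there. On the strip $\overline\Omega\setminus\Omega^\e$ the barrier estimate \eqref{eq:barrierEstSW} gives directly $|u_i(t,x)-u_i(s,x)|\le 2K|D\zeta|_0\e+|\partial_t\Psi_1|_0(t-s)$. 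Combining the two regions and minimising over $\e$ produces the $|t-s|^{1/2}$ bound. The point is that the boundary difficulty is handled by shrinking the domain (so mollification is well-defined) rather than by a time-doubling corrector.
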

\begin{proof}
{Let us start with the boundedness of the solution in the $L^\infty$-norm. 
Setting
$$w(t) \defeq e^{\lambda t}\left(
	\max_{j \in \{0, 1\}} |\Psi_{j}|_0 + t \sup_{i,\alpha} |\ell^{\alpha}_i|_0 		\right),
$$
it is straightforward to verify by insertion (see \cite{barles2007error}) that $w$ is a classical supersolution to \eqref{eq:A1}--\eqref{eq:BCA1}.
Hence by the comparison principle $u_i(t, x) \leq w(t)$ for all $(i, t,x) \in \II \times [0, T] \times \overline{\Omega}$. Proceeding similarly with $-w$ we obtain the bound on $|u|_0$.

}

To establish the Lipschitz regularity of the solution $u$  we start by observing that $u$ is Lipschitz continuous on $[0,T]\times\partial(\Omega\times\Omega)$. This is trivial if
$(x,y) \in\partial\Omega \times \partial\Omega$ or $t=0$. Let now $t>0$, $x\in \partial\Omega$ and $y\in\Omega$. Thanks to \eqref{eq:barrierEstSW}, one has $\forall i\in\mathcal I$
\begin{align}
|u_i(t,x)-u_i(t,y)|& =|\Psi_1(t,x)-u_i(t,y)|\leq |\Psi_1(t,x)-\Psi_1(t,y)| +K\zeta(t,y)\nonumber\\ 
& \leq (|D\Psi_1|_0 +K|D\zeta|_0)|x-y|.\label{eq:lip_bound}
\end{align}
We define
$$m:=\sup_{i,t,x,y}\left\{u_i(t,x)-u_i(t,y)-\bar
w(t)|x-y|\right\},$$
where
\begin{align*} 
\bar w(t):=\ & e^{\lambda_0 t}\Big\{
	 [\Psi_{j}]_1 +|D\Psi_1|_0 +K|D\zeta|_0 + t \sup_{i,\alpha,s} 
\big\{ |u_i|_0[c^{\alpha}_i(s,\cdot)]_1 +
  [\ell^{\alpha}_i(s,\cdot)]_1 \big\}
  \Big\},
\end{align*}

We will follow a similar argument to \cite{barles2007error}, but accounting for the boundaries to prove that $m \leq 0$, from which the result follows. 
We proceed by contradiction assuming that $m>0$ and that the maximum is attained for $\bar i, \bar t, \bar x, \bar y$. 
First of all, we notice if $m>0$ then there exists a $\eta > 0$ such that
$$u_{\bar i}(\bar t,\bar x)-u_{\bar i}(\bar t,\bar y)-\bar w(\bar t)|\bar x-\bar y| - \bar t e^{\lambda_0
  \bar t}\eta>0.$$
Thus, we define an auxiliary function $\psi$ by
$\psi_i(t,x,y) \defeq u_i(t,x)-u_i(t,y)-\bar w(t)|x-y| - t e^{\lambda_0 t}\eta$, 
which also attains a maximum $\bar{M}$ at some point $(\tilde i, \tilde t, \tilde x,\tilde y)$. 
By construction of $\psi_i$ (see the choice of $\eta$), 
the maximum is also strictly positive, i.e.\ $\bar{M}>0$. 
By definition of $\bar w(t)$ we infer that $\tilde t>0$, $\tilde x\neq\tilde y$ and $(\tilde x, \tilde y) \not\in \partial (\Omega \times \Omega)$, by \eqref{eq:lip_bound}. 

Now we check whether the maximum's location can be in the interior of the domain, that is if $(\tilde i, \tilde t, \tilde x,\tilde y) \in \cI \times (0, T) \times \Omega \times \Omega$. 
As noted in \cite{barles2007error}, $\bar w(t)|x-y|+te^{\lambda_0 t}\eta$ is a smooth function at $(\tilde t, \tilde x,\tilde y)$, therefore we can use 
the Crandall-Ishii lemma, 
Theorem \ref{thm:maxprinc},
and Lemma \ref{lem:Mishii} to ignore the switching part for the supersolution to obtain that $\eta\leq 0$.
This is a contradiction and hence $m\leq 0$.

{
Regarding the time regularity, let $t>s$, $\eps>0$ and let $\Omega^\e$ as in \eqref{eqdef:omegaeps}.
We start observing that, thanks to \eqref{eq:barrierEstSW}, in $\overline{\Omega}\setminus\Omega^\e$ the following estimates hold $\forall i\in\mathcal I$
\begin{eqnarray}\nonumber
|u_i(t,x)-u_i(s,x)| &\leq& K\zeta(t,x) + K\zeta(s,x) +  |\partial_t\Psi_1|_0(t-s) \\
&\leq& 2K|D \zeta|_0\e + |\partial_t\Psi_1|_0(t-s).
\label{eq:lipeps}
\end{eqnarray} 

We define $u^\eps_0:= u(s,\cdot)*\rho_\eps$ in $\overline\Omega^\eps$,
where $\rho_\eps$ are mollifiers, 
and consider  the smooth functions
$$w^\pm_{\e, i} (t, x) \defeq 
	e^{\lambda(t-s)}\left(u^\e_{0, i}(x) \pm  C_\e(t-s)\right)\quad \text{and}\quad w^\pm_\eps\equiv(w^\pm_{\e, 1},\ldots,w^\pm_{\e, M}).
$$
It is straightforward to show that $w^+_\e$ (resp. $w^-_\e$) is a supersolution (resp. subsolution) of \eqref{eq:A1} restricted to $(s,T]\times\Omega^\e$, provided that we set
\begin{align*}
C_\eps=C^2_0|D^2 u^\e_0|_0+ C_0(|D u^\e_0|_0+2|u^\e_0|_0+1) \quad \text{and} \quad \lambda = \sup_{i,\alpha}|c^{\alpha, +}_i|_0.
\end{align*}	

We only verify the subsolution property as checking the supersolution property is easier. Thanks to the choice of $C_\eps$, one can easily check that $\forall i\in\mathcal I$
$$
\partial_t w^-_{\e,i} +
\sup_{\alpha\in\A_i} \mathcal{L}^{\alpha}_i(t, x,w^-_{\e,i},e^{\lambda(t-s)}D u^\e_{0,i}, e^{\lambda(t-s)}D^2 u^\e_{0,i})\leq   0.$$
Moreover,  being a subsolution, $u$ satisfies 
$$
u_i(t,x) - \min_{j\neq i}(u_j(t,x)+k)\leq 0,\qquad i\in\II, (t,x)\in Q_T,
$$
which implies 
$$
u^\e_{0,i}(x) - \min_{j\neq i}(u^\e_{0,j}(x)+k)\leq 0, \qquad i\in\II, x\in \Omega^\e.
$$
Hence, $w^-_\e$ is a subsolution in $(s,T]\times\Omega^\e$. Then, applying 
Theorem \ref{thm:WP} one has
(the following inequalities have to be considered componentwise for $i\in\mathcal I$)
\begin{align*}
& u(s,x)-u(t,x) \\
& = u(s,x) - w^-_{\e}(s,x) + w^-_{\e}(s,x) - w^-_\e(t,x) + w^-_\e(t,x) -u(t,x)\\
& \leq \underset{\partial^*Q^\e_{s,T}}\sup(w^-_{\e}-u) + |u^\e_0|_0 (e^{\lambda(t-s)}-1) +e^{\lambda(t-s)}C_\eps (t-s) +u^\e_0(x)-u(s,x)\\
& \leq \underset{\partial^*Q^\e_{s,T}}\sup(w^-_{\e}(t,x)-u(t,x)) + |u|_0\lambda e^{\lambda T}(t-s) +e^{\lambda T}C_\eps (t-s)+[u(s,\cdot)]_1\e
\end{align*}  
 for every $(t,x), (s,x)\in \overline Q^\eps_{s,T}:=[s,T]\times \Omega^\eps$.
 Analogously, using $w^+_\e$, 
\begin{align*}
u(t,x)-u(s,x) \leq \underset{\partial^*Q^\e_{s,T}}\sup(u-w^+_\e) + |u|_0\lambda e^{\lambda T}(t-s) +e^{\lambda T}C_\eps (t-s)+[u(s,\cdot)]_1\e.
\end{align*}  
It remains to estimate $(u-w^+_\e)$ and $(w^-_\e-u)$ on $\partial^*Q^\e_{s,T}=(\{s\}\times \Omega^\e)\cup ((s,T]\times\partial\Omega^\e)$. \\For $t=s$, one has $w^+_\e=w^-_\e=u^\e_0$, hence 
$$
w^-_\e(s,x) - u(s,x) \leq [u(s,\cdot)]_1\e \qquad \text{and} \quad
 \qquad u(s,x)-w^+_\e(s,x) \leq [u(s,\cdot)]_1\e.
$$
For $x\in\partial\Omega^\e$, we use \eqref{eq:lipeps} to majorate $w^-_\e(t,x) - u(t,x)$ and $u(t,x)-w^+_\e(t,x)$ by 
$$
|u|_0\lambda e^{\lambda T}(t-s) +e^{\lambda T}C_\eps (t-s)+[u(s,\cdot)]_1\e+2K|D\zeta|_0\e + |\partial_t\Psi_1|_0(t-s).
$$
At this point a minimization  of the right-hand side  with respect to $\eps$, noting that $C_\eps\leq C_0^2[u]_1\eps^{-1}+C_0([u]_1+2|u|_0+1)$, concludes the proof.
}

\end{proof}

\begin{theorem}[\,{\bf Continuous dependence}\,]
\label{thm:A3}
Let $u$ and $\bar{u}$ be solutions of \eqref{eq:A1}--\eqref{eq:BCA1} with coefficients $\sigma,\,b,\,c,\,\ell$ and $\bar{\sigma},\,\bar{b},\,\bar{c},\,\bar{\ell}$ respectively. 
If both sets of coefficients and the domain satisfy (A1) and (A2), and
$|u|_0+|\bar u|_0+[u(t,\cdot)]_1+[\bar u(t,\cdot)]_1\leq \bar{M} <\infty$ for $t\in
[0,T]$, then 
\begin{align*}
e^{-\lambda t}\max_{i\in \cI} |u_i(t,\cdot)-\bar{u}_i(t,\cdot)|_0 & \leq
\max_{i\in \cI} \sup_{\partial^* Q_T} |u_{i} -\bar u_i|+t^{1/2} \bar K \sup_{i,\alpha}|\sigma^\alpha-\bar{\sigma}^\alpha|_0\\ 
&~~ +t \sup_{i,\alpha}\Big\{2\bar{M}|b^\alpha-\bar{b}^\alpha|_0 + \bar{M} |c^\alpha-\bar{c}^\alpha|_0+|\ell^\alpha-\bar{\ell}^\alpha|_0\Big\},
\end{align*} 
where $\lambda:=\sup_{i,\alpha}|c^-|_0$ and
\begin{align*}
\bar K^2\leq &\ 8\bar{M}^2 +
8\bar{M}T\sup_{i,\alpha}\Big\{ 2\bar{M}[\sigma^\alpha]^2_1\wedge[\bar{\sigma}^\alpha]^2_1\\ 
&+2\bar{M}[b^\alpha]_1\wedge[\bar{b}^\alpha]_1+\bar{M}[c^\alpha]_1\wedge[\bar{c}^\alpha]_1+[\ell^\alpha]_1\wedge[\bar{\ell}^\alpha]_1 \Big\}. 
\end{align*} 
\end{theorem}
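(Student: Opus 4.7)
The strategy is the doubling-of-variables technique adapted to the switching system with Cauchy-Dirichlet data, in the spirit of \cite{barles2007error}. First I rescale by $v_i := e^{-\lambda t} u_i$ and $\bar v_i := e^{-\lambda t} \bar u_i$ to absorb the exponential prefactor, then argue by contradiction: assume the stated bound fails at some $(i^\star, t^\star, x^\star)\in \II\times Q_T$ and, after the standard perturbation replacing $v_i$ by $v_i - \rho/(T-t)$ with $\rho>0$, reduce to a strict maximum at an interior time. For $\beta>0$ I introduce the penalised functional $\Phi(i,t,x,y) := v_i(t,x) - \bar v_i(t,y) - \beta|x-y|^2$ on $\II\times[0,T]\times\overline\Omega\times\overline\Omega$ and let $(\tilde i, \tilde t, \tilde x, \tilde y)$ be a maximizer. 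The uniform Lipschitz bound $[u_i(t,\cdot)]_1 \leq \bar M$ from Theorem \ref{thm:WPmain} forces $\beta|\tilde x - \tilde y| \leq \bar M$ (hence $|p|\leq 2\bar M$ for $p := 2\beta(\tilde x - \tilde y)$) together with $\beta|\tilde x - \tilde y|^2 \to 0$ as $\beta\to\infty$ by standard penalty estimates.

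If the maximizer lies on the parabolic boundary, i.e.\ $\tilde t = 0$, $\tilde x\in\partial\Omega$, or $\tilde y\in\partial\Omega$, the Dirichlet data plus the correction $\bar M|\tilde x - \tilde y|\to 0$ reduce the contribution to $\max_i \sup_{\partial^*Q_T}|u_i - \bar u_i|$, matching the boundary term in the statement. In the interior case Lemma \ref{lem:Mishii} selects $\hat i$ so that the switching part of the $\bar v$-supersolution inequality is strict, and the Crandall-Ishii lemma (Theorem \ref{thm:maxprinc}) produces jets $(a,p,X)\in \overline{\mathcal{P}}^{2,+} v_{\hat i}(\tilde t,\tilde x)$, $(b,p,Y)\in\overline{\mathcal{P}}^{2,-}\bar v_{\hat i}(\tilde t,\tilde y)$ with $a-b=\rho/(T-\tilde t)^2$ and the usual matrix inequality
$$\begin{pmatrix} X & 0 \\ 0 & -Y\end{pmatrix} \leq 3\beta\begin{pmatrix} I & -I \\ -I & I\end{pmatrix}.$$
Subtracting the two inequalities at a near-maximiser $\hat\alpha\in\A_{\hat i}$ of the supersolution's supremum and invoking Lemma \ref{lm:defModLi}, each coefficient difference is estimated as follows: writing $a=\tfrac12\sigma\sigma^T$ and applying the matrix inequality to the block vector $(\sigma^{\hat\alpha}_{\hat i}(\tilde t,\tilde x),\,\bar\sigma^{\hat\alpha}_{\hat i}(\tilde t,\tilde y))^T$, together with the triangle inequality and the Lipschitz continuity of $\sigma$ (or of $\bar\sigma$), bounds $\tr[aX]-\tr[\bar a Y]$ by $3\beta|\sigma-\bar\sigma|_0^2 + 3\beta\bigl([\sigma]_1^2\wedge[\bar\sigma]_1^2\bigr)|\tilde x-\tilde y|^2$; the first-order contribution gives $|(b-\bar b)\cdot p|\leq 2\bar M|b-\bar b|_0 + 2\bar M\bigl([b]_1\wedge[\bar b]_1\bigr)|\tilde x - \tilde y|$; and the zero-order and source terms are controlled through $|v|_0,|\bar v|_0\leq \bar M$ and the Lipschitz constants of $c$ and $\ell$.

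After sending $\rho\downarrow 0$ and using $\beta|\tilde x - \tilde y|^2 \to 0$, the pointwise estimate is promoted to a Gronwall-type differential inequality for $W_\beta(t) := \sup_{i,x,y}\{v_i(t,x) - \bar v_i(t,y) - \beta|x-y|^2\}$, whose integration yields a bound combining $t\cdot 3\beta|\sigma-\bar\sigma|_0^2$ with a $1/\beta$-correction that absorbs the Lipschitz contributions encoded in the constant $\bar K$. The stated $t^{1/2}\bar K|\sigma-\bar\sigma|_0$ scaling then emerges by optimising $\beta$ to balance these two contributions, while the $t$-scalings on the $b,c,\ell$ differences follow directly from the Gronwall integration. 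The main technical obstacle is the interaction between the Dirichlet boundary terms and the doubling penalty; it is resolved because under assumption (A2) the Lipschitz estimate of Theorem \ref{thm:WPmain} and the barrier function jointly ensure continuity of $u$ and $\bar u$ up to $\partial^*Q_T$, so the boundary contribution separates cleanly into $\max_i\sup_{\partial^*Q_T}|u_i-\bar u_i|$.
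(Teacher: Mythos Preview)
Your overall architecture---doubling variables with a $\beta|x-y|^2$ penalty, handling the boundary via the Lipschitz estimate, invoking Lemma~\ref{lem:Mishii} to discard the switching obstacle, and applying the Crandall--Ishii lemma followed by an optimisation in $\beta$---matches the paper's approach and is the right skeleton. The paper likewise reduces the boundary case to $\max_i\sup_{\partial^*Q_T}|u_i-\bar u_i|$ plus an $O(\delta)$ correction coming from the spatial Lipschitz bound, and then refers to Theorem~A.3 in \cite{barles2007error} for the interior estimate.

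The gap is in how you extract the $t$- and $t^{1/2}$-scalings. The $\rho/(T-t)$ perturbation is the device for \emph{comparison} (forcing the maximum to lie at $t<T$ and yielding a strict inequality), but it does not produce the quantitative time dependence: after Crandall--Ishii you obtain $\rho/(T-\tilde t)^2 \le (\text{coefficient error terms})$, and sending $\rho\downarrow 0$ leaves no control on the size of the maximum itself. Your subsequent claim that ``the pointwise estimate is promoted to a Gronwall-type differential inequality for $W_\beta(t)$'' does not follow from what you have set up; $W_\beta$ is not differentiable in $t$ for viscosity solutions, and nothing in the jet inequalities bounds its growth. The paper (following \cite{barles2007error}) instead defines
\[
m:=\sup_{i,t,x,y}\psi^i - \sup_{\II\times Q^*}(\psi^i)^+,\qquad
\bar m:=\sup_{i,t,x,y}\Bigl\{\psi^i - \tfrac{\eta m t}{T}\Bigr\},
\]
so that when $m>0$ the time derivative of the test function contributes $\eta m/T$ to the Crandall--Ishii inequality, giving directly $\eta m/T \le (\text{error terms})$ and hence a bound on $m$ linear in $T$ times the right-hand side. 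The $t^{1/2}$ factor on $|\sigma-\bar\sigma|_0$ then arises from optimising over the penalty parameter, exactly as you anticipated. To repair your argument, replace the $\rho/(T-t)$ term by a linear-in-$t$ term proportional to the quantity you wish to bound (equivalently, adopt the $m,\bar m$ construction).
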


\begin{proof}
As done in the proof of Theorem A.3 in \cite{barles2007error}, without loss of generality we assume that $\lambda = 0$.
We start by defining 
\begin{align*}
\psi^i(t,x,y) &\defeq	u_i(t,x)-\bar u_i(t,y)-\frac{1}{\delta}|x-y|^2,\\
m &:=\sup_{i,t,x,y}\psi^i(t,x,y)-\sup_{\II \times Q^*}(\psi^i(t,x,y))^+,\\
\bar m &:=\sup_{i,t,x,y}\left\{\psi^i(t,x,y)-
  \frac{\eta m t}T\right\},
\end{align*}
where $\eta \in(0,1)$ and $Q^* \defeq (\{0\} \times \overline{\Omega} \times \overline{\Omega}) \cup ((0, T] \times \partial (\Omega \times \Omega))$. 
The aim is to obtain an upper bound for $m$ using the fact that $u$ and $\bar u$ are viscosity solutions (and therefore sub- and supersolution to the corresponding equation).
Let $m \leq 0$ and assume first that the supremum in the second term is attained for $(\bar t, \bar x, \bar y) \in (0, T] \times \partial \Omega \times \Omega$, then by Lipschitz regularity of $u_i$
\begin{align*}
\sup_{\II \times Q^*}(\psi^i(t,x,y))^+ &\leq \sup_{(t, x) \in \partial^* Q_T} | u_i(t,x) - \bar u_i(t,x) | + [u_i(t,\cdot)]_1 |\bar x-\bar y| - \frac{1}{\delta}|\bar x-\bar y|^2 \\
& \leq \sup_{(t, x) \in \partial^* Q_T} | u_i(t,x) - \bar u_i(t,x) | + \frac{\delta}{4} ([u_i(t,\cdot)]_1)^2.
\end{align*}
{
Similar bounds can be obtained, using the Lipschitz regularity of the boundary and initial conditions in $\overline{\Omega}$, for any $(\bar t, \bar x, \bar y) \in Q^*$}. \\

Let $m > 0$ and consider that the supremum for $\bar m$ is attained at some point $(i_0, t_0, x_0, y_0)$. 
Since $m > 0$, arguing by contradiction, it follows that $(t_0, x_0, y_0) \not\in Q^*$, $\bar m > 0$ and by Lemma \ref{lem:Mishii}, the index $i_0$ may be chosen so that $\bar u_{i_0}(t_0, y_0) < \mathcal{M}_{i_0} \bar u_{i_0}(t_0, y_0)$.

The rest of the proof is identical to the proof of 
Theorem A.3 in \cite{barles2007error} and we only give a sketch. 
As $(t_0, x_0, y_0) \not\in Q^*$ and $i_0$ is chosen such that the equation holds at the maximum point for $\bar m$, we can apply the maximum principle as in Theorem \ref{thm:maxprinc} to $\bar m$ and use the resulting inequalities to obtain an upper bound for $m$.
Then, switching the roles of $u$ and $\bar{u}$ as super- and subsolution we obtain the lower bound.
\end{proof}



\subsection{Convergence rate for a switching system}
\label{sec:convSW}

Based on the regularity results from the previous section, we derive the convergence of the switching system to the HJB equation. 
To do so, we introduce three different second order non-linear parabolic equations (equations \eqref{Sw1}, \eqref{HJBi}, and \eqref{Sw2} below). 

First, we consider the following type of switching systems,
\begin{align}
\label{Sw1}
F_i(t,x,v,\partial_t v_i,Dv_i,D^2v_i)&=0 && \text{in}\quad Q_T, \quad
i\in\II:=\{1,\dots,M\},\\
v(0,x)&=\Psi_0(x) 		&&\text{in}\quad \overline{\Omega},\nonumber \\
v(t,x)&=\Psi_1(t, x)	&&\text{in}\quad(0, T] \times \partial \Omega,\nonumber
\end{align}
where the solution is $v=(v_{1}, \ldots ,v_{M})$, and for 
$i\in\II$, 
$(t,x)\in Q_T$, $r =(r_{1}, \ldots ,r_{M}) \in \R^M$, $p_t\in\R$, $p_x\in
\R^d$, and $X \in \mathcal{S}^d$, $F_{i}$ is given by
$$
F_i(t,x,r,p_t,p_x,X)
=\max\Big\{p_t+\sup_{\alpha \in \mathcal{A}_{i}}\mathcal{L}^{\alpha}(t,x,r_i,p_x,X);
r_i-\mathcal{M}_ir\Big\},$$
$\mathcal{A}_{i}$ is a subset of $\mathcal{A}$, $\mathcal{L}^{\alpha}$ is defined in \eqref{eq:def_L} and $\mathcal{M}_ir$ in \eqref{eq:defM}.

Our objective is to obtain a convergence rate for \eqref{Sw1}, as $k \to 0$ (the switching cost in \eqref{eq:defM}), to the following HJB equation
\begin{align}
\label{HJBi}
u_t+\sup_{\alpha \in \tilde \A} \mathcal{L}^{\alpha}(t, x,u,Du,D^2u)&=0 && \text{in}
\quad Q_T,\\
u(0,x)&=\Psi_0(x) 		&&\text{in}\quad \overline{\Omega},\nonumber \\
u(t,x)&=\Psi_1(t, x)		&&\text{in}\quad(0, T] \times \partial \Omega,\nonumber
\end{align}
where $ \tilde \A = \cup_{i}\, \A_{i}$.


The following proposition is a corollary of  Theorems \ref{thm:exist-unique} and \ref{thm:WPmain}.
\begin{proposition} 
\label{compSw1}
Assume (A1), (A2) and (A3) hold. 
Let $v$ and $u$ be the unique viscosity solutions of \eqref{Sw1} and \eqref{HJBi} respectively. 
Then,  $$|v|_1 +|u|_1 \leq \bar C,$$
where the constant $\bar C$ only depends on $T$, the constants  appearing in (A1), (A3) { and the bounds on $\zeta$, $\Psi_1$ and their derivatives}. 
\end{proposition}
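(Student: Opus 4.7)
The plan is to apply Theorems \ref{thm:exist-unique} and \ref{thm:WPmain} twice: once to the switching system \eqref{Sw1} directly, and once to the HJB equation \eqref{HJBi} viewed as the $M=1$ specialisation of a switching system. As the authors noted earlier, the case $|\mathcal I|=1$ reduces \eqref{eq:A1}--\eqref{eq:BCA1} to a single HJB equation, so \eqref{HJBi} fits this framework with action set $\tilde{\mathcal A}=\bigcup_i\mathcal A_i$, which is compact as a finite union of compact metric spaces, and for which the coefficient bounds in (A1) and the barrier inequality (A2) continue to hold with the same constants (both assumptions being already uniform in $(i,\alpha)$).

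First I would invoke Theorem \ref{thm:exist-unique}, which yields the existence and uniqueness of $v$ and $u$ together with the pointwise barrier estimates
\[
|v_i - \Psi_1|\leq K\zeta,\qquad |u - \Psi_1|\leq K\zeta,
\]
where $K$ depends only on $C_0$, $C_1$ and on $\Psi_1,\zeta$ and their derivatives. This $K$ is independent of $M$ and of the switching cost $k$: the sub-/supersolutions $f_1,g_1$ in the proof of that theorem ignore the switching structure (since $-k<0$ makes the switching part a subsolution trivially), while $f_2,g_2$ depend only on $\Psi_0$ and on $\sup_{i,\alpha}|c^\alpha_i|_0$.

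Second I would apply Theorem \ref{thm:WPmain} to each of $v$ and $u$. Its three estimates produce, respectively, a uniform $L^\infty$-bound, a spatial Lipschitz bound
\[
\sup_{t\in[0,T]}\max_{i\in\cI}[v_i(t,\cdot)]_1 \;+\; \sup_{t\in[0,T]}[u(t,\cdot)]_1 \;\leq\; C,
\]
and a uniform temporal $1/2$-Hölder bound. All constants depend only on $T$, the constant $C_0$ from (A1), on $K|D\zeta|_0$ and $|\partial_t\Psi_1|_0$ from (A3), and on $|D\Psi_1|_0$ and $[\Psi_0]_1$. Since $|\cdot|_1 = |\cdot|_0 + [\cdot]_1$ and $[\cdot]_1$ combines the spatial Lipschitz and temporal $1/2$-Hölder seminorms, summing the three estimates yields $|v|_1 + |u|_1 \leq \bar C$ with $\bar C$ of the advertised form.

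The only point requiring a moment of care---but not really an obstacle---is verifying that every constant produced along the way is uniform in $i\in\cI$ (so that passing to the maximum over $i$ is harmless) and independent of the switching cost $k>0$. Both facts are visible from a direct inspection of the two theorems: the barriers, test functions and smoothings constructed there never involve $k$, and (A1) supplies coefficient bounds uniform in $(i,\alpha)$. With this observation, the proposition follows as a direct corollary.
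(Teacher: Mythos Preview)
Your proposal is correct and matches the paper's approach exactly: the paper simply states that the proposition is a corollary of Theorems \ref{thm:exist-unique} and \ref{thm:WPmain}, and your write-up spells out precisely how (including the $M=1$ reduction for \eqref{HJBi} and the uniformity in $i$ and $k$).
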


To obtain the convergence rate we will use a regularization approach introduced by Krylov \cite{krylov2000}. Krylov's regularization procedure shows a way to construct smooth subsolutions on $Q_T$ by mollification of the solution to a system with ``shaken coefficients". 
For bounded domains, if applied directly in $\overline \Omega$, this requires to define such a solution at points lying outside the domain. To avoid this, the use of Krylov's technique has to be restricted to a smaller domain, whereas the estimates close to the boundary are obtained using \eqref{eq:barrierEstSW}.   

We define the auxiliary system
\begin{align}
\label{Sw2}
F^\eps_i(t,x,v^\eps,\partial_t v_i^\eps,Dv^\eps_i,D^2v^\eps_i)&=0 
 &&\text{in}\quad (0, T+\eps^2] \times \Omega, \quad i\in\II,\\
v^\eps_i(0,x) &= \Psi_0(x) 		&& \text{in} \quad \overline{\Omega},\nonumber \\
v^\eps_i(t,x) &= \Psi_1(t, x)			&& \text{in} \quad (0, T+\eps^2] \times \partial\Omega,\nonumber
\end{align}
where $v^\eps = (v^\eps_1, \cdots, v^\eps_M)$,
\begin{align*}
&F^\eps_i(t,x,r,p_t,p_x,X) = \\ 
&\qquad \max\left\{p_t+\sup_{ \substack{\alpha \in\A_{i} \\ 0 \leq \eta  \leq\eps^2, |\xi|\leq\eps}}
 	\mathcal{L}^{\alpha}\left( t+\eta,x+\xi,r_i,p_x,X\right);\ 
r_i-\mathcal{M}_ir \right\},
\end{align*} 
and the coefficients in the definition of $\mathcal{L}^{\alpha}$ in \eqref{eq:def_L} are extended to the relevant domain according to McShane's Theorem, Theorem \ref{mcshane}.


The proof of the main result in this section relies on the Lipschitz continuity in space of the solution to the family of switching systems with parameter $\e > 0$ in \eqref{Sw2}.

From assumption (A3), Theorems \ref{thm:exist-unique} and \ref{thm:A3} we infer the following result.
\begin{proposition} \label{WPi2}
Assume (A1), (A2) and (A3). There exists a unique continuous viscosity solution  $v^\e : [0, T+\e^2] \times \overline{\Omega} \to \mathbb{R}^M$ to \eqref{Sw2}.
Moreover, for all $i \in \II$
$$|v^\eps_i|_1 \leq \bar C \text{ in } [0, T+\eps^2] \times \overline{\Omega}
\quad \text{and} \quad
|v_i^\eps-v_i|_0 \leq \bar C\eps \text{ in } \overline{Q}_T,$$
where $v$ solves \eqref{Sw1} and  $\bar C$ only depends on $T$, the constants from (A1), (A3) { and the bounds on $\zeta$, $\Psi_1$ and their derivatives}.
\end{proposition}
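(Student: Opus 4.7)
The plan is to recast the auxiliary system \eqref{Sw2} as a standard switching system of the form \eqref{eq:A1}--\eqref{eq:BCA1} on the enlarged cylinder $[0,T+\varepsilon^2]\times\overline\Omega$, and then invoke the machinery of Theorems \ref{thm:exist-unique}, \ref{thm:WPmain} and \ref{thm:A3}. Concretely, for each $i\in\II$ I would introduce the enlarged compact control set $\tilde{\A}_i := \A_i\times[0,\varepsilon^2]\times\overline{B}_\varepsilon(0)$ and define shaken coefficients $\tilde\sigma_i^{\alpha,\eta,\xi}(t,x) := \sigma_i^\alpha(t+\eta,x+\xi)$, and analogously for $b_i^\alpha$, $c_i^\alpha$, $\ell_i^\alpha$. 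The original coefficients are first extended to $\R\times\R^d$ via McShane's theorem (Theorem \ref{mcshane}) so that the shifted arguments always lie in their domain. With these choices, \eqref{Sw2} becomes a switching system exactly of the type treated in Section \ref{sec:switch_prelims}, provided $\Psi_1$ is also extended in time to $\overline Q_{T+\varepsilon^2}$ while keeping its $C^{1,2}$ norm uniformly bounded.

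The main obstacle is checking that the reformulated problem inherits (A1)--(A3) with constants independent of $\varepsilon\in(0,\varepsilon_0]$ for some $\varepsilon_0>0$. Assumption (A1) is immediate, since McShane extension preserves the Lipschitz seminorm, so $C_0$ is inherited uniformly in $\tilde\alpha$; the compatibility in (A3) is untouched, and the time extension of $\Psi_1$ preserves its norms. The delicate point is (A2): the shaken system needs its own barrier. The plan is to show that, after a $C^{1,2}$ time extension of $\zeta$ to $\overline Q_{T+\varepsilon^2}$, the rescaling $\tilde\zeta := 2\zeta$ still satisfies the strict supersolution inequality uniformly in $\tilde\alpha$. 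Substituting $\tilde\zeta$ into the shaken analogue of \eqref{zeta_inequ} and using Lipschitz continuity of the coefficients together with boundedness of $\zeta, D\zeta, D^2\zeta$ should yield
\begin{equation*}
-\tilde\zeta_t+\tilde b_i^{\tilde\alpha} D\tilde\zeta+\tr[\tilde a_i^{\tilde\alpha} D^2\tilde\zeta]+\tilde c_i^{\tilde\alpha}\tilde\zeta \le 2(-1+C\varepsilon) \le -1
\end{equation*}
as soon as $\varepsilon\le\varepsilon_0 := 1/(2C)$, with $C$ depending only on $C_0$ and $|\zeta|_{C^{1,2}}$.

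Once (A1)--(A3) are verified, Theorem \ref{thm:exist-unique} yields the unique continuous viscosity solution $v^\varepsilon$ on $[0,T+\varepsilon^2]\times\overline\Omega$, and Theorem \ref{thm:WPmain} furnishes the uniform bound $|v^\varepsilon_i|_1 \le \bar C$, where $\bar C$ depends only on $T+1$, the constants in (A1) and (A3), and the norms of $\zeta$ and $\Psi_1$, but not on $\varepsilon$.

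For the stability estimate $|v^\varepsilon-v|_0 \le \bar C \varepsilon$, I would then view \eqref{Sw1} over the same enlarged control set $\tilde{\A}_i$ by choosing coefficients that are constant in $(\eta,\xi)$. Then $v$ and $v^\varepsilon$ solve switching systems on $\overline Q_T$ with identical control set, initial data and boundary data, and with coefficients that differ only by the spatio-temporal shift of size at most $\varepsilon$. By Lipschitz continuity, $|\sigma_i^\alpha(\cdot)-\sigma_i^\alpha(\cdot+\eta,\cdot+\xi)|_0 \le C_0(\varepsilon+\varepsilon^2) \le 2C_0\varepsilon$, and analogous $O(\varepsilon)$ bounds hold for $b_i^\alpha, c_i^\alpha, \ell_i^\alpha$. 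Since $|v|_1$ and $|v^\varepsilon|_1$ are bounded by the previous step uniformly in $\varepsilon$, Theorem \ref{thm:A3} applies with $\bar M$ independent of $\varepsilon$ and directly produces $\max_i|v^\varepsilon_i(t,\cdot)-v_i(t,\cdot)|_0 \le \bar C\varepsilon$ on $\overline Q_T$, completing the plan.
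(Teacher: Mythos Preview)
Your approach is essentially the same as the paper's: recast \eqref{Sw2} as a switching system with the enlarged control set $\A_i\times[0,\eps^2]\times\overline B_\eps(0)$, verify that (A1)--(A3) persist for $\eps$ small (with a suitably modified barrier), and then invoke Theorems~\ref{thm:exist-unique}, \ref{thm:WPmain} and \ref{thm:A3}, noting that $v^\eps=v$ on $\partial^*Q_T$ and that the coefficients differ by at most $2C_0\eps$. Your proposal is simply more explicit than the paper's short proof, in particular your concrete choice $\tilde\zeta=2\zeta$ for the modified barrier and your trick of trivially enlarging the control set for \eqref{Sw1} so that Theorem~\ref{thm:A3} applies directly.
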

\begin{proof}
The operator $F^\eps_i$  in \eqref{Sw2} is obtained from $F_i$ in \eqref{Sw1} by replacing the coefficients $\phi^\alpha=b,\sigma,c,f$  by $\phi^\alpha(\cdot+\eta,\cdot+\xi)$ for $0\leq \eta\leq \eps^2$ and $|\xi|\leq \eps$. Therefore, for $\eps$ small enough, assumptions (A1), (A2) and (A3) are still satisfied by \eqref{Sw2} with possibly different constants $C_0,C_1$ and a suitably modified function $\zeta$.
As a consequence, Theorem \ref{thm:exist-unique} holds true and $v^\eps$ satisfies \eqref{eq:barrierEstSW}.
The bound on $|v^\e_i|_1$ and the last claim follow respectively  by Theorem 
\ref{thm:WPmain}  and the continuous dependence estimates in Theorem \ref{thm:A3}. 
In particular, we have that $v^\eps_i=v_i$ on $\partial^*Q_T$  and for the coefficients  $\phi = b, \sigma, c, f$ 
\begin{align*}
\left|\phi^\alpha(t, x) - \phi^\alpha(t+\eta,x+\xi)\right| &\leq 2\e[\phi^\alpha]_1.
\end{align*}
\end{proof}





\label{eq:deftildev}

Using these results, we have all the necessary ingredients to state and prove the rate of convergence of \eqref{Sw1} to \eqref{HJBi}
 for the case of bounded spatial domains with Dirichlet boundary conditions.
 Part of the proof is very close to the one in Theorem 2.3 in \cite{barles2007error}, but they differ in  the way to estimate the bound close to $\partial\Omega$.

\begin{theorem}
\label{sw-rate}
Assume (A1)--(A3). 
If $u$ and $v$ are the solutions of \eqref{HJBi} and \eqref{Sw1} respectively, then for $k$ small enough, 
$$0\leq v_i - u\leq Ck^{1/3}\quad \text{in}\quad \overline Q_T,\quad i\in\II,$$
where $C$ only depends on $T$, the constants from (A1)--(A3) { and the bounds on $\zeta$, $\Psi_1$ and their derivatives}.
\end{theorem}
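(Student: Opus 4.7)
The plan is to split the argument into a straightforward lower bound and a more delicate upper bound based on Krylov's shaken-coefficient regularisation, coupled with a decomposition of $\overline{Q}_T$ into an interior region where smoothing is available and a boundary layer controlled by the barrier estimate \eqref{eq:barrierEstSW}.

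For the lower bound $v_i - u \geq 0$, I will show that $w := \min_{j \in \II} v_j$ is a viscosity supersolution of \eqref{HJBi}. At any test-function minimum of $w$, some index $i$ realises $v_i = w$, and then $v_i - \mathcal{M}_i v \leq -k < 0$, so the switching obstacle is inactive; the supersolution property of $v$ for \eqref{Sw1} therefore forces $\partial_t \varphi + \sup_{\alpha \in \A_i} \mathcal{L}^\alpha \varphi \geq 0$, which extends to the sup over $\tilde\A \supset \A_i$. Since $w$ matches $u$ on $\partial^* Q_T$, the comparison principle (Theorem \ref{thm:WP} with $M=1$) gives $w \geq u$, and hence $v_i \geq w \geq u$ for every $i$.

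For the upper bound, let $v^\e$ solve the shaken switching system \eqref{Sw2}, so that $|v^\e - v|_0 \leq \bar C \e$ by Proposition \ref{WPi2}, and set $v^{\e,\delta}_i := v^\e_i * \rho_\delta$ with $\delta \leq \e$; then $v^{\e,\delta}$ is smooth on $Q_T^\delta := (\delta^2, T] \times \Omega^\delta$, where $\Omega^\delta := \{x \in \Omega : d(x, \partial\Omega) > \delta\}$. Since $F^\e_i$ is convex in $(r, p_t, p_x, X)$, the Jensen-plus-shaking argument (Lemma 2.7 of \cite{barles2002convergence}) shows that $v^{\e,\delta}_i$ is a classical subsolution of the \emph{unshaken} switching system $F_i = 0$ on $Q_T^\delta$. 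This yields simultaneously $\partial_t v^{\e,\delta}_i + \sup_{\alpha \in \A_i} \mathcal{L}^\alpha v^{\e,\delta}_i \leq 0$ and the propagated obstacle $|v^{\e,\delta}_i - v^{\e,\delta}_j|_0 \leq k$ for all $i, j$, which follows because $|v^\e_i - v^\e_j|_0 \leq k$ is preserved by convolution. Inverse estimates for the parabolic mollifier \eqref{eq:rhoxt} applied to $v^{\e,\delta}_i - v^{\e,\delta}_j = (v^\e_i - v^\e_j)*\rho_\delta$ then give
\begin{equation*}
|D^m(v^{\e,\delta}_i - v^{\e,\delta}_j)|_0 \leq C k / \delta^m \quad (m=1,2), \qquad |\partial_t (v^{\e,\delta}_i - v^{\e,\delta}_j)|_0 \leq C k / \delta^2.
\end{equation*}
For any $\alpha \in \tilde\A$ pick $j(\alpha) \in \II$ with $\alpha \in \A_{j(\alpha)}$; the subsolution inequality for $v^{\e,\delta}_{j(\alpha)}$ together with swapping $v^{\e,\delta}_{j(\alpha)}$ for $v^{\e,\delta}_i$ at cost $O(k/\delta^2)$ yields
\begin{equation*}
\partial_t v^{\e,\delta}_i + \sup_{\alpha \in \tilde\A} \mathcal{L}^\alpha v^{\e,\delta}_i \leq C k / \delta^2 \qquad \text{on } Q_T^\delta,
\end{equation*}
i.e.\ $v^{\e,\delta}_i$ is a classical approximate subsolution of \eqref{HJBi} with residual $O(k/\delta^2)$.

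To conclude, I compare $v^{\e,\delta}_i$ with $u$ on $Q_T^\delta$ and patch with a boundary-layer estimate. On $\partial\Omega^\delta$, \eqref{eq:barrierEstSW} applied to both $v_i$ and $u$ together with $\zeta(t, x) \leq |D\zeta|_0\,\delta$ there gives $|v_i - u| \leq 2K|D\zeta|_0\,\delta$, and combining with $|v^{\e,\delta}_i - v_i|_0 \leq C(\e + \delta)$ yields $|v^{\e,\delta}_i - u| \leq C(\e + \delta)$; at $t = \delta^2$ the joint H\"older-$1/2$ time regularity from Theorem \ref{thm:WPmain} and standard mollification estimates produce the same order. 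Applying the comparison principle for \eqref{HJBi} on $Q_T^\delta$ with source term $Ck/\delta^2$ gives $v^{\e,\delta}_i - u \leq C(\e + \delta + k/\delta^2)$, and hence $v_i - u \leq C(\e + \delta + k/\delta^2)$ in $Q_T^\delta$; outside $Q_T^\delta$ the barrier alone gives $v_i - u \leq C\delta$. Setting $\e = \delta$ and optimising $\delta + k/\delta^2$ at $\delta \sim k^{1/3}$ produces the stated bound $v_i - u \leq C k^{1/3}$. The main obstacle, absent in the whole-space treatment of \cite{barles2007error}, is that mollification is unavailable near $\partial\Omega$; the key fact making the rate match is that \eqref{eq:barrierEstSW} controls $|v_i - u|$ on $\partial\Omega^\delta$ at the order $\delta$, precisely matching the interior contribution from Krylov's argument.
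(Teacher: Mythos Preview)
Your argument is correct and, for the upper bound, coincides with the paper's: Krylov's shaken system \eqref{Sw2}, mollification to a smooth subsolution of the independent equations, the index-swap using $|v^\eps_i-v^\eps_j|_0\leq k$ to pass from $\sup_{\A_i}$ to $\sup_{\tilde\A}$ at cost $O(k/\eps^2)$, continuous dependence on the restricted domain $Q_T^\eps$, and the barrier estimate \eqref{eq:barrierEstSW} in the boundary strip. Your two parameters $(\eps,\delta)$ are cosmetic since you set $\eps=\delta$ at the end, and your ``comparison with source term'' is precisely the paper's appeal to Theorem~\ref{thm:A3}.

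The lower bound is where you deviate. The paper takes the dual route: it checks that $w=(u,\ldots,u)$ is a subsolution of the switching system \eqref{Sw1} (the obstacle is $-k<0$ and $\sup_{\A_i}\leq\sup_{\tilde\A}$) and invokes comparison for the \emph{system}, yielding $u\leq v_i$ directly. You instead show $\min_j v_j$ is a supersolution of the scalar equation \eqref{HJBi} and invoke comparison for the \emph{scalar} problem. Both are one-line arguments; the paper's choice avoids having to identify the minimising index and verify the obstacle is inactive there, while yours keeps the comparison scalar. Neither buys anything substantial over the other.
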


\begin{proof}
For the lower bound consider $w=(u,\dots,u) \in \mathbb{R}^M$. 
It is easy to check that $w$ is a subsolution of \eqref{Sw1}.
Then, given that $w = v_i$ on $\partial^* Q_T$ for $i\in\II$, 
by comparison for \eqref{Sw1} (Proposition~\ref{compSw1}) yields $u\leq
v_i$ for $i\in\II$. 

For the upper bound we use the regularization procedure of Krylov
\cite{krylov2000}.
Consider the system \eqref{Sw2} and let $v^\eps$ be its unique solution.
By the same arguments as in the proof of Theorem 2.3 in \cite{barles2007error}, shifting the variables preserves the subsolution property, in particular, ${v}^{\eps}(t-s,x-e)$ is a subsolution of the following system of independent equations
\begin{align}
\label{LSys}
\partial_t w_i+\sup_{\alpha \in\A_{i}}\mathcal{L}^\alpha (t,x,w_i,Dw_i,D^2w_i)= 0 \quad
\text{in}\quad  Q^\eps_T, \quad i\in\II,
\end{align}
where $Q^\e_T$ is the following restricted domain  
$$
Q^\e_T := (0, T]\times \Omega^\e
$$
with $\Omega^\e:=\{x\in \Omega : d(x)>\e\}.$
Next, we define ${{v}_\eps}:={v}^\eps*\rho_\eps$ where $\{\rho_\eps\}_{\eps}$ is the sequence of mollifiers defined in \eqref{eq:rhoxt} and conclude that ${v}_\eps$ is also a subsolution of equation \eqref{LSys}.
This is a consequence of using a Riemann-sum approximation to the mollification and using Lemma 2.7 in \cite{barles2002convergence}.

Moreover, one can verify that
$$ |{v}^{\eps}_i-{v}^{\eps}_j|_0\leq k, \quad i,j\in\II$$
and by properties of mollifiers and the previous bound, using integration by parts we obtain the same bounds as in \cite{barles2007error}
\begin{align*}
|\partial_t {v}_{\eps\,i}-\partial_t {v}_{\eps\,j}|_0\leq C\frac k {\eps^2},\quad
|D^n {v}_{\eps\,i} - D^n {v}_{\eps\,j}|_0\leq C\frac k {\eps^n}, \quad
n\in\N,\quad i,j\in\II,
\end{align*}
where $C$ depends only on $\rho$ and the constant $\bar C$ in Proposition \ref{WPi2}.
As a result, restricting the arguments in \cite{barles2007error} to the domain $Q^\eps_T$ and using Theorem \ref{thm:A3}, one has


\begin{align}\label{ineq1}{v}_{\eps\,i} - u \leq 
	e^{C t} \left( \sup_{\partial^* Q_T^\eps} |{v}_{\eps\,i}(t,x)-u(t,x)| +
 Ct\frac{k}{\eps^2} \right) \quad \text{in}\quad \overline{ Q_T^\eps}, \quad i\in\II,\end{align}
where as defined previously $\partial^* Q_T^\eps$ denotes the parabolic envelope of $Q_T^\eps$, i.e.\
$\partial^* Q_T^\eps = (\{\e^2\} \times \overline{\Omega} ) \cup ( (\e^2, T] \times \partial \Omega^\e)$.
Moreover by Proposition \ref{WPi2}, the regularity of $v$ and properties of mollifiers, we have  
\begin{align}\label{ineq1b}v_i - v_{\eps\,i}\leq  C\eps \quad 
\text{in}\quad \overline{Q_T}, \quad i\in\II.\end{align}



It remains to estimate $v_i-u$ in $\overline{Q_T}\setminus Q^\eps_T$. We use the regularity of $u$ and $v_i$ (Proposition \ref{compSw1}) together with the estimate \eqref{eq:barrierEstSW} (which holds true for both $u$ and $v_i$). One has 
\begin{align}\label{ineq2}
&|u(t,x)-v_i(t,x)|\leq ([u]_1+[v_i]_1) \eps \qquad \text{in}\quad
[0,\eps^2]\times\overline{\Omega}
\end{align}
and
\begin{align}\label{ineq3}
&|u(t,x)-v_i(t,x)|\leq K \zeta(t,x)\leq  K|D\zeta|_0 \eps \qquad \text{in}\quad
[0,T]\times (\overline{\Omega}\setminus\Omega^\e).
\end{align}
The result follows putting together inequalities \eqref{ineq1}, \eqref{ineq1b}, \eqref{ineq2}, \eqref{ineq3}, and minimizing with respect to $\e$.
\end{proof}

\section{Error bounds for discretizations of the Cauchy-Dirichlet problem}
\label{sec:Errors}

We start by listing our assumptions, which are the same as in \cite{barles2007error}, except (A5), but written here for bounded domains. 
For the HJB equation (\ref{E})--(\ref{eq:def_L}), in addition to (A1)--(A3) for $\mathcal{I}=\{1\}$ and setting $\mathcal{A} = \mathcal{A}_1$ we have:


\medskip
\noindent{\bf (A4)} The coefficients $\sigma^{\alpha}$, $b^{\alpha}$, $c^{\alpha}$, $f^{\alpha}$ are continuous in $\alpha$ for all $t$, $x$.

{
\noindent{\bf (A5)} $\Psi_1, \zeta \in C^2_\delta$ for $\delta\in (0,1]$ from assumption (S3)(ii) below.}

{
\begin{remark}
Assumption (A5) is necessary to estimate the scheme close to the boundary, see Lemma \ref{lem:barrier} below. Notice that this is also a natural requirement  to ensure that Assumptions 2.7 and 2.8 in \cite{dong2005rate} are satisfied, see Remark 2.9 there.
\end{remark}
}

%

\medskip
For the scheme \eqref{S} the following conditions need to be fulfilled.

\smallskip
\noindent {\bf (S1)} {\bf (Monotonicity)}  There exists $\lambda,
\mu\geq 0 ,h_0>0$ such that if $|h| \leq h_0$, $u\leq v$ are functions
in $C_b(\mathcal{G}_h)$, and $\phi(t)=e^{\mu t}(a+bt)+c$ for
$a,b,c\geq0$, then  
$$S(h,t,x,r+\phi(t),[u+\phi]_{t,x}) \geq  S(h,t,x,r,[v]_{t,x}) + b/2 -
\lambda c
\quad \text{in} \quad \mathcal{G}_h^+.
$$

\medskip
\noindent {\bf (S2)} {\bf (Regularity)}
For every $h>0$ and $\phi\in C_b(\G_h)$, the function 
$(t,x) \mapsto$\linebreak $S(h,t,x,\phi(t,x),[\phi]_{t,x})$
is bounded and continuous in $\G_h^+$ and the function $r \mapsto
S(h,t,x,r,[\phi]_{t,x})$ is locally uniformly continuous in $r$,
uniformly in $(t,x) \in \G_h^+$.

\medskip
{

\noindent {\bf (S3)} {\bf (Consistency)}
{\bf (i) }There exists a function $E$ 
such that for every $(t, x) \in \mathcal{G}^+_h \cap \Omega^{\e}$ (where $\Omega^\eps$ is defined in \eqref{eqdef:omegaeps}), 
$h=(\Delta t, \Delta x) > 0$,
and for any sequence $\{\phi_\eps\}_{\eps>0}$ of smooth functions satisfying
$$  |\partial_t^{\beta_0}D^{\beta'} \phi_\e (x,t) | \leq \tilde K
\e^{1-2\beta_0-|\beta'|}  \quad \hbox{in  }\overline Q_T ,\quad\text{ for
any $\beta_0\in\N_0$, $\beta'=(\beta'_i)_i\in\N_0^{d}$},$$
where $|\beta'|=\sum_{i=1}^d \beta_i'$, the following estimate holds:
\begin{align*}
\left| \partial_t \phi_{\e} + F(t, x, \phi_{\e}, D\phi_{\e}, D^2 \phi_{\e}) - S(h, t, x, \phi_{\e}(t, x), [\phi_{\e}]_{t,x}) \right| \leq E(\tilde K,h,\e).
\end{align*}
{\bf (ii) } There exists a function $\tilde E$ such that for any $(t, x) \in \mathcal{G}^+_h$, $h>0$, 
 and any $\phi\in C^{2}_\delta(\overline Q_T)$ for some $\delta\in (0,1]$, 
\begin{align*}
\left| \partial_t \phi + F(t, x, \phi, D\phi, D^2 \phi) - S(h, t, x, \phi(t, x), [\phi]_{t,x}) \right| \leq \tilde E(h,|\phi|_{2,\delta})
\end{align*}
and $\tilde E(h,\cdot)\to 0 $ as $h\to 0$.
\\
}
\medskip
\noindent {\bf (S4)} {\bf (Stability)} For every $h$ the scheme \eqref{S} has a unique solution in $C_b(\G_h)$. 
\medskip

\begin{remark}
Typical monotone approximation schemes considered in the literature are various finite
difference schemes (see, e.g., \cite{kushner2001numerical, BOZ04, oberman2010convergence}) and control schemes based on the dynamic programming principle
(see, e.g., \cite{camilli1995approximation, debrabant2013semi}). However, the problem being restricted to a domain with strong Dirichlet conditions, the scheme may require to be modified close to the boundary, see \cite{ReisingerRotaetxe17}.
\end{remark}

\begin{remark}
The consistency property (S3)(i) is introduced in \cite{barles2007error} and it is seen from the proof of the error bounds that the family of test functions for which it is stated constitutes the minimal requirement for the use of Krylov's mollification arguments.  
The definition of consistency in \cite{barles2007error} is slightly more general since it takes into account the case of different $E$ in the upper and lower bound. We do not allow for this here to simplify the notation and since it is not relevant in our application. Note that we require this property only in the interior $\Omega^\eps$ of the domain.
\end{remark}


\subsection{Discrete comparison result and estimates near boundaries}

We first state a comparison result for bounded continuous sub- and supersolutions of the numerical scheme \eqref{S} implied by assumptions (S1) and (S2).
This result is a slight modification of Lemma 3.2 in \cite{barles2007error}.
\begin{lemma}
\label{lem:SchemeComp}
Assume (S1), (S2), and that $u,v\in C_b(\G_h)$ satisfy
$$ S(h,t,x,u (t,x),[u]_{t,x})\leq g_1 \quad\hbox{in  } \G_h^+\; ,$$
$$ S(h,t,x,v (t,x),[v]_{t,x})\geq g_2 \quad\hbox{in  } \G_h^+\; ,$$
where $g_1, g_2 \in C_b(\G_h)$. Then
$$u-v\leq e^{\mu t} \sup_{(t, x) \in \partial^* \mathcal{G}_h} |(u(t,x)-v(t,x))^+|_0 + 2te^{\mu
  t}|(g_1-g_2)^+|_0,$$ 
where  $\mu$ is given by (S1).
\end{lemma}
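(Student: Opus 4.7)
This is a standard discrete comparison statement for monotone schemes, and I would follow the Barles--Jakobsen strategy: turn $v$ into a strict discrete supersolution that dominates $u$ on $\partial^*\mathcal{G}_h$ by adding a corrector whose form is dictated by (S1), then propagate the inequality into the interior by contradiction.

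To set up, put $a:=\sup_{\partial^*\mathcal{G}_h}(u-v)^+$, $b:=2|(g_1-g_2)^+|_0$, and for each $\varepsilon>0$ define
\[
\phi_\varepsilon(t):=e^{\mu t}\bigl(a+(b+\varepsilon)t\bigr),
\]
which is admissible in (S1) with parameters $a\ge 0$, $b+\varepsilon>0$, $c=0$. Applying (S1) to the trivial pair $v\le v$ with this corrector gives, at every $(t,x)\in\mathcal{G}_h^+$,
\[
S(h,t,x,v+\phi_\varepsilon,[v+\phi_\varepsilon]_{t,x})\ge S(h,t,x,v,[v]_{t,x})+(b+\varepsilon)/2\ge g_1+\varepsilon/2,
\]
where the last inequality uses $b=2|(g_1-g_2)^+|_0$ together with $S(v)\ge g_2$. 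On $\partial^*\mathcal{G}_h$ one has $u-v\le a\le\phi_\varepsilon(t)$ by construction.

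The goal is to show $u\le v+\phi_\varepsilon$ on all of $\mathcal{G}_h$, whence the claim follows by letting $\varepsilon\downarrow0$. I would argue by contradiction: set $M_\varepsilon:=\sup_{\mathcal{G}_h}(u-v-\phi_\varepsilon)$, suppose $M_\varepsilon>0$, and observe that the supremum must then be approached along some sequence $(t_n,x_n)\in\mathcal{G}_h^+$. By construction, $U:=u-\phi_\varepsilon-M_\varepsilon$ and $V:=v$ satisfy $U\le V$ on $\mathcal{G}_h$, so (S1) applies to this pair with the shifted corrector $\psi(t):=\phi_\varepsilon(t)+M_\varepsilon$ (admissible with $c=M_\varepsilon>0$). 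Evaluating at $r=U(t_n,x_n)$, and using $U+\psi=u$, yields
\[
S(h,t_n,x_n,u,[u]_{t_n,x_n})\ge S(h,t_n,x_n,u-\phi_\varepsilon-M_\varepsilon,[v]_{t_n,x_n})+(b+\varepsilon)/2-\lambda M_\varepsilon.
\]

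The main obstacle is the last step, converting the above into a genuine contradiction despite the supremum possibly not being attained on the (potentially infinite) grid. Along the maximising sequence, $u(t_n,x_n)-\phi_\varepsilon(t_n)-M_\varepsilon\to v(t_n,x_n)$, so I would invoke the local uniform $r$-continuity from (S2), together with boundedness of $u$, $v$ and $\phi_\varepsilon$, to replace $S(h,t_n,x_n,u-\phi_\varepsilon-M_\varepsilon,[v]_{t_n,x_n})$ by $S(h,t_n,x_n,v,[v]_{t_n,x_n})+o(1)$. Combining with $S(u)\le g_1$ and $S(v)\ge g_2$ and inserting $b=2|(g_1-g_2)^+|_0$ collapses the inequality to $\varepsilon/2\le\lambda M_\varepsilon+o(1)$, i.e.\ $M_\varepsilon\ge\varepsilon/(2\lambda)$ when $\lambda>0$ (and an immediate contradiction when $\lambda=0$). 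Since $M_\varepsilon\le M:=\sup_{\mathcal{G}_h}(u-v-\phi)$ is a finite number independent of $\varepsilon$, choosing $\varepsilon>2\lambda M$ forces the contradiction, so $M_\varepsilon\le 0$ for every $\varepsilon>0$. Letting $\varepsilon\downarrow 0$ yields $u-v\le e^{\mu t}(a+bt)$ on $\mathcal{G}_h$, which is the asserted bound.
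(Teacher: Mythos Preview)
Your overall strategy is exactly the Barles--Jakobsen argument that the paper invokes, and the first part (constructing $\phi_\varepsilon$, checking the boundary inequality, applying (S1) along an approximate maximiser and invoking (S2)) is correct. However, the closing step contains a genuine gap.

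From the dichotomy ``either $M_\varepsilon\le 0$ or $\lambda M_\varepsilon\ge \varepsilon/2$'' together with $M_\varepsilon\le M$, you only conclude $M_\varepsilon\le 0$ for $\varepsilon>2\lambda M$. The sentence ``so $M_\varepsilon\le 0$ for every $\varepsilon>0$'' does not follow: what you have actually proved is $u-v\le e^{\mu t}\bigl(a+(b+2\lambda M)t\bigr)$, which is strictly weaker than the claim. Monotonicity of $M_\varepsilon$ in $\varepsilon$ goes the wrong way to bootstrap downward.

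The cleanest repair is to change the quantity you maximise. Set
\[
N_\varepsilon:=\sup_{\mathcal{G}_h} e^{-\mu t}\bigl(u-v-\phi_\varepsilon\bigr).
\]
If $N_\varepsilon>0$, then $u-\widetilde\psi\le v$ with $\widetilde\psi(t):=e^{\mu t}\bigl((a+N_\varepsilon)+(b+\varepsilon)t\bigr)$, which is admissible in (S1) with $c=0$. Applying (S1) to the pair $(u-\widetilde\psi,v)$ with corrector $\widetilde\psi$ now gives, at an approximate maximiser for $N_\varepsilon$ (where $(u-\widetilde\psi-v)(t_n,x_n)\to 0$, so (S2) applies as before),
\[
g_1(t_n,x_n)\ge g_2(t_n,x_n)+(b+\varepsilon)/2+o(1),
\]
hence $b/2\ge (b+\varepsilon)/2$, a direct contradiction for every $\varepsilon>0$. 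Thus $N_\varepsilon\le 0$ for all $\varepsilon>0$, i.e.\ $u-v\le\phi_\varepsilon$, and letting $\varepsilon\downarrow 0$ gives the stated bound. Alternatively, you can rescue your original argument by noting that $\varepsilon\mapsto M_\varepsilon$ is continuous and non-increasing, so the dichotomy forces $\{\varepsilon:M_\varepsilon>0\}=\varnothing$; but this extra continuity step must be written out.
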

\begin{proof}
The proof follows by Lemma 3.2 in \cite{barles2007error} once we have accounted for the difference of $u$ and $v$ at the parabolic boundary.
\end{proof}

As already noted in the proof of Theorem \ref{sw-rate}, Krylov's regularization procedure produces a smooth semi-solution only in the restricted domain $Q^\e_T$. As a consequence, the consistency property of the scheme cannot be used up to the parabolic boundary of the domain. 
The following result provides an important control on the solution of the scheme, $u_h$, and will allow us 
to obtain estimates in a neighbourhood of $\partial\Omega$.

\begin{lemma}\label{lem:barrier}
\label{lem:barrierSW}
Let assumptions (A1)--(A3), (A5) and (S1)--(S4) be satisfied and let $u_h$ be the solution of the scheme \eqref{S}. One has
\begin{equation}\label{eq:SbarrierEst}
\begin{split}
-K \zeta(t,x) - e^{\mu t}\sup_{\partial^*\G_h}|\big( &(\Psi_{1}-K\zeta)-u_h\big)^+|_0\leq  u_h(t,x)-\Psi_1(t,x)\\
& \leq K \zeta(t,x) + e^{\mu t}\sup_{\partial^*\G_h}|\big(u_h - (\Psi_{1}+K\zeta)\big)^+|_0 \quad \text{in} \quad \G^+_h,
\end{split} 
\end{equation}
for a  constant $K$ independent of $h$  and for $\mu$ given by (S1).
\end{lemma}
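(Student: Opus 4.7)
The plan is to construct $w_+ \defeq \Psi_1 + K\zeta$ as a supersolution of the scheme and, symmetrically, $w_- \defeq \Psi_1 - K\zeta$ as a subsolution, for a single constant $K$ chosen large enough but independent of $h$. The two inequalities in \eqref{eq:SbarrierEst} then follow from the discrete comparison result Lemma \ref{lem:SchemeComp} applied respectively to $(u,v)=(u_h,w_+)$ and $(u,v)=(w_-,u_h)$. Note that $w_\pm \in C^{2}_\delta(\overline Q_T)$ by assumption (A5) together with the smoothness of $\Psi_1$ and $\zeta$ in (A2)--(A3), so that consistency (S3)(ii) may be invoked.

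First, I would show that $w_+$ is a classical supersolution of \eqref{E} with a margin growing linearly in $K$. Since $\mathcal{L}^\alpha$ is affine in $(r,q,X)$,
\[
\mathcal{L}^\alpha(t,x,w_+,Dw_+,D^2w_+) = \mathcal{L}^\alpha(t,x,\Psi_1,D\Psi_1,D^2\Psi_1) + K\bigl(-\mathrm{tr}[a^\alpha D^2\zeta] - b^\alpha D\zeta - c^\alpha \zeta\bigr),
\]
and the barrier inequality \eqref{zeta_inequ} bounds the bracketed term from below by $1-\zeta_t$, uniformly in $\alpha$. Taking the supremum over $\alpha$ and adding $\partial_t w_+$ yields
\[
\partial_t w_+ + F(t,x,w_+,Dw_+,D^2w_+) \;\geq\; K - C_*,
\]
where $C_*$ depends only on $|\Psi_1|_2$, $|\zeta|_2$ and the coefficient bounds from (A1), via Remark \ref{rem:boundA}.

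Next, the consistency property (S3)(ii) applied to $w_+$ gives
\[
S(h,t,x,w_+(t,x),[w_+]_{t,x}) \;\geq\; K - C_* - \tilde E(h,|w_+|_{2,\delta}) \quad \text{in } \G^+_h.
\]
Since $|w_+|_{2,\delta} \leq |\Psi_1|_{2,\delta} + K|\zeta|_{2,\delta}$, and $\tilde E(h,\cdot)$ is bounded uniformly on bounded sets for $h \leq h_0$, the constant $K$ can be fixed (depending on the data but not on $h$) so that the right-hand side is non-negative throughout $\G^+_h$. As $u_h$ satisfies $S(h,t,x,u_h,[u_h]_{t,x})=0$, Lemma \ref{lem:SchemeComp} with $(g_1,g_2)=(0,0)$ gives
\[
u_h - w_+ \;\leq\; e^{\mu t}\sup_{\partial^*\G_h}\bigl|(u_h - w_+)^+\bigr|_0,
\]
which is precisely the upper estimate in \eqref{eq:SbarrierEst}.

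The lower bound follows by the mirror-image argument. The same affine expansion, with the $K$-contribution now reversed in sign, combined with \eqref{zeta_inequ}, yields $\partial_t w_- + F(t,x,w_-,Dw_-,D^2w_-) \leq C_* - K$; here one uses $\sup_\alpha(A_\alpha + B)=\sup_\alpha A_\alpha + B$ for $B$ independent of $\alpha$. Applying (S3)(ii) again shows $S(h,t,x,w_-,[w_-]_{t,x}) \leq 0$ on $\G^+_h$ for the same $K$, and Lemma \ref{lem:SchemeComp} with the roles of $u$ and $v$ interchanged delivers the lower inequality. The only delicate point in the plan is the choice of $K$ independent of $h$: this hinges on the uniform boundedness of $\tilde E(h,\cdot)$ on bounded inputs for $h \leq h_0$ together with (A5), without which the consistency estimate on the non-smooth-free barrier ansatz could not be applied on the entire grid $\G^+_h$.
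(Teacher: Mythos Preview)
Your approach is exactly the paper's: build $w_\pm=\Psi_1\pm K\zeta$, use the affine structure of $\mathcal{L}^\alpha$ together with the barrier inequality~\eqref{zeta_inequ} to get $\partial_t w_\pm + F(\cdot,w_\pm,\ldots)\gtrless \pm(K-C_*)$, pass through (S3)(ii), and conclude by Lemma~\ref{lem:SchemeComp}. The only point to tighten is the order in which $K$ and $h$ are fixed: your appeal to ``uniform boundedness of $\tilde E(h,\cdot)$ on bounded sets'' is potentially circular because $|w_+|_{2,\delta}$ grows with $K$, so the bound on $\tilde E$ may grow as fast as $K$; the paper instead first fixes $K>C_*$ (depending only on the data), and then, for the now-fixed value $|\Psi_1|_{2,\delta}+K|\zeta|_{2,\delta}$, uses $\tilde E(h,\cdot)\to 0$ from (S3)(ii) to take $h=h(K)$ small enough that $K-C_*-\tilde E\ge 0$.
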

\begin{proof}
We are going to prove that there exists a suitable constant $K$ such that $g:=\Psi_{1} + K \zeta$ is a supersolution of the scheme \eqref{S} in $\G^+_h$. 
{
Since $g\in C^{2}_{\delta}(\overline Q_T)$ for $\delta\in (0,1]$, the consistency property (S3)(ii), together with assumptions (A2) and (A3), gives  (the constant $C$ below depends only on the constant $C_0$  in assumption (A1) and the bound on the derivatives of $\Psi_1$)
\begin{align*}
& S(h,t,x,g(t,x),[g]_{t,x}) \\
& \geq g_t + F(t,x,g,Dg,D^2 g) - \tilde E(h,|g|_{2,\delta})\\
& \geq K\zeta_t + \sup_{\alpha\in\mathcal A}\Big\{-K\,b^\alpha(t,x)D\zeta-K\,tr[a^\alpha(t,x)D^2\zeta] -Kc^\alpha(t,x)\zeta\Big\} -C-\tilde E(h,|g|_{2,\delta})\\
& \geq K - C-\tilde E(h,|\Psi_1|_{2,\delta}+K|\zeta|_{2,\delta})\geq 0
\end{align*}
for $K$ big enough and $h=h(K)$ sufficiently small. Therefore, from Lemma \ref{lem:SchemeComp} it follows that 
$$
u_h - g\leq e^{\mu t}\sup_{(t,x)\in\partial^*\G_h}|\big(u_h - g\big)^+|_0
$$
in $\overline Q_T$.  In the same way it is possible to show that $\Psi_1 - K\zeta$ is a subsolution and obtain the other inequality.}
\end{proof}

\subsection{Upper bound by Krylov regularization}

In this section, we prove an upper bound for the difference between the solution of \eqref{E}--\eqref{BC} and the numerical solution of the scheme \eqref{S}. The arguments follow \cite{KrylovDong07Estimates}. Before stating the result, we introduce the functions and  equations involved in the proof and give some preliminary results.

Let $\e\in (0,\e_0]$. 
We start by considering the solution $u^\e$ to a shaken equation:
\begin{align}
\label{eq:shakenE}
u^{\e}_t + 
\sup_{\substack{0 \leq \eta \leq \e^2, |\xi| \leq \e\\ \alpha\in\mathcal A}}\ \mathcal L^{\alpha}(t+\eta, x+\xi, u^\e,Du^{\e}, D^2 u^{\e}) = 0 & && \text{in} \quad (0, T+\e^2]\times\Omega, \\
\label{eq:shakenIC}
 u^{\eps}(0, x) = \Psi_0 (x) & && \text{in} \quad \overline{\Omega}, \\
\label{eq:shakenBC}
 u^{\eps}(t,x) 	= \Psi_1(t,x) & && \text{in} \quad (0,T+\varepsilon^2] \times \partial \Omega,
\end{align}
where for every $\alpha\in\mathcal A$, $(\eta,\xi)\in\R\times\R^d$ such that $0\leq \eta\leq \e^2, |\xi|\leq \e$, the new differential operator is obtained by $\mathcal L^\alpha$ replacing the coefficients $\phi^\alpha(t,x)$ for $\phi\equiv a,b,c,f$ by $\phi^\alpha(t+\eta, x+\xi)$
(when necessary, the coefficients are extended appropriately  by McShane's Theorem, Theorem \ref{mcshane}).



\begin{theorem}[\,{\bf Upper bound}\,]
\label{mainresA}
Assume (A1)--(A5) and (S1)--(S3). Let $u$ denote the solution of \eqref{E} satisfying \eqref{IC} and \eqref{BC} in the strong sense, and let $h$ be sufficiently small.
Then there exist constants $C$ {and  $\bar C$ (from Proposition \ref{WPi2})}
depending only on the constants in assumptions (A1)--(A3), (S1)--(S3) and the bounds on $\Psi_1$, $\zeta$ and their derivatives,  such that
\begin{align}
\label{eq:UpperB}
u-u_h \leq e^{\mu T} \sup_{(t, x) \in \partial^* \mathcal{G}_h}|(u -u_h)^+|_0   
				+ C\min_{\e>0} \left(\e + {
				E ( \bar C, h,\eps)}\right) 
\quad\text{in}\quad \mathcal{G}_h.
\end{align}
\end{theorem}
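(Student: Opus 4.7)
The plan is to apply Krylov's regularization procedure to build a smooth sub-solution of the scheme on a restricted (interior) subdomain, control the error across the interface between this interior region and the parabolic boundary using the barrier function $\zeta$, and combine these two estimates via the discrete comparison principle (Lemma~\ref{lem:SchemeComp}).

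First, I would introduce the shaken equation \eqref{eq:shakenE}--\eqref{eq:shakenBC} with solution $u^\e$ on $(0,T+\e^2]\times\overline{\Omega}$. By Proposition~\ref{WPi2} (applied in the HJB case $M=1$), $u^\e$ is Lipschitz with constant bounded by $\bar C$ independent of $\e$, and $|u^\e-u|_0\le \bar C\e$ on $\overline Q_T$. Define the mollification $u_\e := u^\e * \rho_\e$ on $Q_T^\e := (0,T]\times\Omega^\e$ with $\Omega^\e=\{x\in\Omega:d(x)>\e\}$; the mollifier support \eqref{eq:rhoxt} lies in $(0,\e^2)\times\{|x|<\e\}$, so $u_\e$ is well defined on $Q_T^\e$. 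Standard mollifier estimates plus Lipschitz regularity of $u^\e$ give $|u_\e-u^\e|_0\le \bar C\e$ on $\overline{Q_T^\e}$ and the derivative bounds $|\partial_t^{\beta_0}D^{\beta'}u_\e|\le \tilde K\e^{1-2\beta_0-|\beta'|}$ required in (S3)(i), with $\tilde K$ proportional to $\bar C$. By Jensen-type/convexity arguments (Lemma~2.7 in \cite{barles2002convergence}, valid since $F$ is a sup of linear operators), $u_\e$ is a classical subsolution of the original HJB equation \eqref{E} on $Q_T^\e$.

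Next, applying consistency (S3)(i) to $u_\e$ at every point of $\mathcal{G}_h^+\cap \Omega^\e$ yields
\[
S(h,t,x,u_\e(t,x),[u_\e]_{t,x}) \le E(\bar C,h,\e).
\]
I then invoke the discrete comparison Lemma~\ref{lem:SchemeComp} on the restricted grid $\mathcal{G}_h\cap\overline{Q_T^\e}$, taking $u=u_\e$ and $v=u_h$ (note that this use of Lemma~\ref{lem:SchemeComp} on a sub-grid only requires monotonicity (S1), which is local, and regularity (S2)). This produces
\[
u_\e - u_h \le e^{\mu T}\sup_{\partial^*(\mathcal{G}_h\cap\overline{Q_T^\e})}|(u_\e-u_h)^+|_0 + 2Te^{\mu T} E(\bar C,h,\e)\quad\text{on }\mathcal{G}_h\cap\overline{Q_T^\e}.
\]

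The main obstacle is the boundary layer where (S3)(i) is not available, because the mollification argument produces a smooth function only on the interior $\Omega^\e$. To handle this I use the barrier: Theorem~\ref{thm:exist-unique} applied with $M=1$ gives $|u-\Psi_1|\le K\zeta$ on $\overline Q_T$, and Lemma~\ref{lem:barrierSW} gives the analogous one-sided bound for $u_h$,
\[
u_h \ge \Psi_1 - K\zeta - e^{\mu T}\sup_{\partial^*\mathcal{G}_h}|((\Psi_1-K\zeta)-u_h)^+|_0 .
\]
On $\overline{Q_T}\setminus Q_T^\e$ one has $0\le \zeta\le |D\zeta|_0\,\e$, so $|u-u_h|\le 2K|D\zeta|_0\e + e^{\mu T}\sup_{\partial^*\mathcal{G}_h}|(u-u_h)^+|_0$. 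On the inner boundary $\{d(x)=\e\}\cap(\e^2,T]$, the same estimate holds, and at $t=\e^2$ one combines the time-regularity from Theorem~\ref{thm:WPmain} (giving $|u(\e^2,x)-u(0,x)|\le C\e$) with the same barrier bound on the numerical solution, so
\[
\sup_{\partial^*(\mathcal{G}_h\cap\overline{Q_T^\e})}|(u_\e-u_h)^+|_0 \le \sup_{\partial^*\mathcal{G}_h}|(u-u_h)^+|_0 + C\e.
\]

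Assembling the three pieces $u-u^\e$, $u^\e-u_\e$, $u_\e-u_h$ on $\mathcal{G}_h\cap\overline{Q_T^\e}$ and the boundary estimate on $\mathcal{G}_h\cap(\overline{Q_T}\setminus Q_T^\e)$ gives
\[
u - u_h \le e^{\mu T}\sup_{\partial^*\mathcal{G}_h}|(u-u_h)^+|_0 + C\bigl(\e + E(\bar C,h,\e)\bigr)\qquad\text{on }\mathcal{G}_h,
\]
after absorbing the factors $2Te^{\mu T}$ into $C$. Minimizing over $\e>0$ yields \eqref{eq:UpperB}. The delicate technical point throughout is making sure the smoothing radius $\e$ is compatible with the shrinkage to $\Omega^\e$ and that the barrier bound transitions smoothly from the interior estimate to the parabolic boundary, which is precisely where assumption (A2) and the extra regularity (A5) enter.
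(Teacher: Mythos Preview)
Your approach is essentially the same as the paper's: shaken equation, mollification to a smooth subsolution on $Q_T^\e$, consistency (S3)(i) plus the discrete comparison Lemma~\ref{lem:SchemeComp} on the interior grid, and the barrier estimate (Theorem~\ref{thm:exist-unique} with Lemma~\ref{lem:barrierSW}) on the strip $\overline{Q_T}\setminus Q_T^\e$. One small technical point: the paper mollifies the time-shifted function $u^\e(\cdot+\e^2,\cdot)$ rather than $u^\e$ directly, so that the convolution is well defined on all of $(0,T]\times\Omega^\e$ using the extended solution on $(0,T+\e^2]$; with your unshifted $u_\e=u^\e*\rho_\e$ you would need $u^\e$ at negative times when $t<\e^2$, which is why you are forced to treat $\{t=\e^2\}$ as an additional boundary and invoke time regularity there---the paper's shift avoids this extra step.
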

\begin{proof}
We start by considering $u^\e$, the unique viscosity solution to \eqref{eq:shakenE}--\eqref{eq:shakenBC}. 
As a special  case of Proposition \ref{WPi2}, we have that $u^\e$ is Lipschitz in space and H\"older continuous with exponent $\frac{1}{2}$ in time.

For any fixed $\eta = s$, $\xi = e$, with $0 \leq s \leq \e^2$ and $|e|\leq \eps$, $u^{\eps}(t+\e^2-s,x-e)$ is a subsolution of 
\begin{align*}
u_t + F(t, x, u(t,x), Du, D^2 u) = 0 \quad \text{in} \quad (0, T] \times \Omega^\eps.
\end{align*}

Now let 
$${u}_\e(t,x) \defeq  \int_{0 \leq s \leq \e^2} \int_{|e|\leq \e} 
	{u}^{\eps}(t+\e^2- s,x-e) \, \rho_\e(s, e) \, \diff e \, \diff s ,$$ 
where $\{\rho_\e\}_\e$ is the sequence of mollifiers defined in \eqref{eq:rhoxt}. 
Realizing that ${u}_\e$ is a convex combination of viscosity subsolutions and by stability results of viscosity solutions, see Lemma 2.7 in \cite{barles2002convergence}, we conclude that ${u}_\e$ is a classical subsolution of \eqref{E} in $(0, T] \times \Omega^\eps$. 

For all $(t,x) \in \overline Q_T$ one has
$$|u(t, x) - {u}_\e(t, x)| \leq |u(t, x) - {u}^\e(t, x)| + |{u}^\e(t, x) - {u}_\e(t, x)|,$$
where we can bound the second term using the regularity of $u^\eps$ and properties of the mollifier. 
To estimate the first term we employ Theorem \ref{thm:A3} and the fact that both functions are viscosity solutions of their  corresponding equations, therefore for any $(t,x) \in \overline Q_T$
$$e^{-\lambda t}|u(t,x)-{u}^\e (t,x)| \leq \sup_{\partial^* Q_T} |u - {u}^\e| + C \e = C \e,$$
where $C > 0$ depends on $T$, the $|\cdot|_1$ norm of the coefficients, but not on $\e$.
It follows by the properties of mollifiers that for any $\beta\in\N_0\times \N^d_0$ 
$$
|\partial_t^{\beta_0}D^{\beta'}u_\eps|_0\leq | u^\eps|_1 \eps^{1-2\beta_0-|\beta'|}\quad\text{and}\quad [\partial_t^{\beta_0}D^{\beta'}u_\eps]_{1}\leq |u^\eps|_1,
$$
where $| u^\eps|_1 \le \bar C$ by Proposition \ref{WPi2},
so that  by the consistency property (S3)(i) and the fact that ${u}_\e$ is a smooth subsolution, we have that
$$ S(h,t,x,{u}_\e (t,x),[{u}_\e]_{t,x})\leq E(\bar C,h,\eps) \quad\hbox{in  } \G^{\e,+}_h\; .$$
Finally, we compare $u_h$ and ${u}_\e$ using the scheme's comparison principle formulated in Lemma \ref{lem:SchemeComp}, and use it to establish the upper bound as
\begin{align*}
u - u_h &\leq e^{\mu T} \sup_{(t, x) \in \partial^* \mathcal{G}^\e_h}|({u}_\e -u_h)^+|_0   
				+ C\min_{\e>0} \left(\e +  E(\bar C,h,\eps)\right) \\
		&\leq e^{\mu T} \sup_{(t, x) \in \partial^* \mathcal{G}^{\e}_h}|(u -u_h)^+|_0   
				+ C\min_{\e>0} \left(\e +  E(\bar C,h,\eps)\right).
\end{align*}

It remains then to estimate $u-u_h$ in $\G_h\setminus \G^\e_h$, i.e.\ at points $(t,x)$ such that $t\in [0,T]$ and $x$ is in a neighbourhood of $\partial\Omega$. Applying Lemma \ref{lem:barrier}, on $\G_h\setminus\G^\e_h$ one has
$$
u-u_h \leq 2K\zeta +e^{\mu t}\sup_{\partial^*\G_h}|((\Psi_1-K\zeta)-u_h)^+|_0\leq 2K|D\zeta|_0\e +e^{\mu t}\sup_{\partial^*\G_h}|((\Psi_1-K\zeta)-u_h)^+|_0,
$$  
where the second inequality follows by the Lipschitz continuity of $\zeta$ and \eqref{eq:zeta1}. The proof is then concluded by observing that from \eqref{eq:zeta0} 
$$\sup_{\G^1_{h}}|((\Psi_1-K\zeta)-u_h)^+|_0\leq \sup_{\G^1_{h}}|(\Psi_1-u_h)^+|_0 = \sup_{\G^1_{h}} |(u-u_h)^+|_0 $$
and, by virtue of \eqref{eq:psi0psi1}, for $K$ big enough,
$$\sup_{\G^0_{h}}|((\Psi_1-K\zeta)-u_h)^+|_0\leq \sup_{\G^0_{h}}|(\Psi_0-u_h)^+|_0 = \sup_{\G^0_{h}} |(u-u_h)^+|_0. $$
 
\end{proof}

\subsection{Lower bound by switching system approximation}

For the derivation of the lower bound we follow the approach in \cite{barles2007error} and use a switching system approximation to build ``almost smooth" supersolutions to \eqref{E}.  
There are two main steps in the proof. First, we consider the case of a finite control set $\mathcal{A}$. 
Then the result is extended to the general case using assumption (A4).
It is in the first step that the proof needs to be adapted for the case of a bounded domain with Dirichlet conditions. 
The second part is identical to the original proof in \cite{barles2007error}.

The next set of lemmas contain some key results regarding the solutions of the auxiliary switching system below and its relation to the solution of \eqref{E}--\eqref{BC}.
The purpose of this auxiliary system is to ensure that the ``almost smooth supersolution" is defined for the whole $Q_T$.
\begin{align}
\label{Sw3}
F^\eps_i(t,x,v^\eps,\partial_t v^\eps_i,Dv^\eps_i,D^2v^\eps_i)&=0 &&
\text{in} \quad (0, T+ 2\e^2] \times \Omega, \ i\in\mathcal{I}=\{1,\dots,M\}, \\
v^\eps(0,x)&= \Psi_0 (x) && \text{in} \quad \overline{\Omega}, \nonumber \\ 
 v^{\eps}(t,x) 	&= \Psi_1(t,x)&& \text{in} \quad (0,T+ 2\e^2] \times \partial \Omega, \nonumber
\end{align}
where,
\begin{align}
& F^\eps_i(t,x,r,p_t,p_x,X)= \nonumber \\
\label{esc}
& \qquad\max\Big\{p_t+\min_{0\leq s \leq
\eps^2, |e|\leq\eps}\mathcal{L}^{\alpha_i}\left( t+\eta,x+\xi,r_i,p_x,X\right);\
r_i-\mathcal{M}_i r\Big\},
\end{align}
for any $\alpha_i \in \A$, $\mathcal{L}^{\alpha_i}$ is defined in \eqref{eq:def_L} and $\mathcal{M}_i r$ in \eqref{eq:defM}.
As a consequence of Proposition \ref{WPi2} and Theorem 
\ref{sw-rate} one has:
\begin{lemma}
\label{l:sw}
Assume (A1)--(A3), then the solution $v^\eps$ of \eqref{Sw3} satisfies
$$|v^\eps_i|_1\leq \bar C, \quad 
	|v^\eps_i - v^\eps_j|_0\leq k,\textit{and, for small k,} \quad
	\max_{i\in \II} |u- v_{i}^\eps|_0\leq C(\eps+k^{1/3}),
$$
where $u$ solves \eqref{E}--\eqref{BC} for $\mathcal A=\{\alpha_1,\ldots,\alpha_M\}$, $i,\,j\in \II$, and $\bar C$
{ (from Proposition \ref{WPi2})}
 and $C$ only depend on $T$, the constants from (A1)--(A3)  and the bounds on $\Psi_1$, $\zeta$ and their derivatives.
\end{lemma}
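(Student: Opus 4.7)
The plan is to deduce the three assertions in turn by assembling the regularity, comparison, and rate results already established in the section, rather than redoing any delicate estimates; the statement is essentially a bookkeeping corollary.

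For the Lipschitz bound $|v^\eps_i|_1 \leq \bar C$, I would first verify that the shaken system \eqref{Sw3} still satisfies (A1)--(A3) with constants independent of $\eps$, once the coefficients are McShane-extended to a slightly enlarged spatial-temporal domain as indicated in the definition of \eqref{Sw3}. The key observation is that taking a minimum over a bounded set of $(\eta,\xi)$-translates of a Lipschitz function preserves Lipschitz continuity with the same constant, so (A1) is preserved. Theorem \ref{thm:WPmain}, applied directly to \eqref{Sw3}, then yields the uniform Lipschitz bound. Although Proposition \ref{WPi2} was stated for the sup-shaken system \eqref{Sw2}, its proof only uses the sup-norm/Lipschitz regularity of the shaken coefficients and the continuous dependence Theorem \ref{thm:A3}, both of which are insensitive to whether the inner optimisation over $(\eta,\xi)$ is a supremum or a minimum; hence the proposition transfers verbatim to \eqref{Sw3}.

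For the switching-cost bound $|v^\eps_i-v^\eps_j|_0\leq k$, I would argue directly from the obstacle part of $F^\eps_i$. Being a continuous viscosity subsolution of $F^\eps_i=0$, the function $v^\eps$ must satisfy $v^\eps_i-\mathcal{M}_i v^\eps\leq 0$ pointwise, since otherwise the constant test function $\varphi\equiv v^\eps_i(t,x)$ at a violation point would make the $\max$ in \eqref{esc} strictly positive, contradicting the subsolution property. This gives $v^\eps_i\leq v^\eps_j+k$ for every $j\neq i$, and interchanging the roles of $i$ and $j$ yields the reverse inequality.

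For the rate $\max_i|u-v^\eps_i|_0\leq C(\eps+k^{1/3})$, I would introduce the \emph{unshaken} switching system \eqref{Sw1} associated with the singletons $\A_i=\{\alpha_i\}$ and denote its solution by $v=(v_1,\ldots,v_M)$. Since $\tilde{\A}=\cup_i\A_i=\{\alpha_1,\ldots,\alpha_M\}$, the HJB equation \eqref{HJBi} associated to this switching system is precisely \eqref{E}, so Theorem \ref{sw-rate} directly gives $|v_i-u|_0\leq Ck^{1/3}$ for $k$ small. Applying Theorem \ref{thm:A3} to the pair (\eqref{Sw1},\eqref{Sw3}), whose coefficients differ by at most $C\eps$ in $C^0$ thanks to (A1) and the translation bound $|\phi^\alpha(t,x)-\phi^\alpha(t+\eta,x+\xi)|\leq 2\eps[\phi^\alpha]_1$, yields $|v^\eps_i-v_i|_0\leq \bar C\eps$, which is the direct analogue of the second bound in Proposition \ref{WPi2}. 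The triangle inequality then closes the argument.

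The only subtle point, which I would address up front, is verifying that the proofs of Proposition \ref{WPi2} and Theorem \ref{thm:A3} really are agnostic to the sign of the inner shaking optimisation. Inspection shows that they use only the uniform regularity of the coefficients under translation, so no genuine new analysis is required; the content of the lemma is the assembly itself.
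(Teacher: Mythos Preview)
Your approach is correct and is precisely what the paper's one-line proof (citing Proposition~\ref{WPi2} and Theorem~\ref{sw-rate}) unpacks to; you also correctly flag and resolve the only genuine subtlety, namely that the preliminary results of Section~\ref{sec:switch_prelims} are stated for a supremum over controls while \eqref{Sw3} involves a minimum over the shaking parameters.

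One small correction: the constant test function $\varphi\equiv v^\eps_i(t_0,x_0)$ only activates the subsolution inequality at local maxima of $v^\eps_i$, so it does not by itself give $v^\eps_i-\mathcal{M}_i v^\eps\leq 0$ at an arbitrary violation point. The standard fix is to test with $\varphi_N(t,x)=v^\eps_i(t_0,x_0)+N(|t-t_0|^2+|x-x_0|^2)$; the maximum point of $v^\eps_i-\varphi_N$ converges to $(t_0,x_0)$ as $N\to\infty$, the obstacle constraint (being zeroth-order) holds at each such maximum, and continuity of $v^\eps$ passes it to the limit. Equivalently, use that points with nonempty parabolic superjet are dense.
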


The next two lemmas  concern certain properties of the mollification of the solution $v^\e$. 
They are identical to Lemmas 3.4 and 3.5 in \cite{barles2007error}, restricted to points in $\Omega^\eps$, so that their proofs still hold.

\begin{lemma}
\label{vesupersol}
Assume (A1)--(A3) and $\eps \leq (8\sup_i[v^\eps_i]_{1})^{-1}k$ where $v^\eps$ is solution to \eqref{Sw3}. 
Let 
\begin{eqnarray}
\label{eq:veps}
v_{\eps i} \defeq v^\eps_i(\cdot + \e^2, \cdot) \ast \rho_\eps, \quad\text{for } i \in \II,
\end{eqnarray}
then, if $j \defeq \mathrm{argmin}_{i\in\II}v_{\eps i} (t,x)$, we have that 
\begin{equation*}
	\partial_t {v}_{\eps j} (t,x) +
	    \mathcal{L}^{\alpha_j}(t,x,{v}_{\eps j}(t,x),D {v}_{\eps j} (t,x), D^2 {v}_{\eps j}(t,x))  \geq 0\quad\text{in }(0,T]\times\Omega^\e.
\end{equation*}
\end{lemma}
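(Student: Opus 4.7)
The plan is to combine the viscosity supersolution property of $v^\eps$ for the switching system \eqref{Sw3} with the observation that, at the argmin index $j$, the switching constraint $v^\eps_j<\mathcal{M}_j v^\eps$ holds \emph{strictly} on the effective support of $\rho_\eps$. Once this is established, the PDE portion of the supersolution survives alone, and a standard averaging argument of Krylov--Lions type delivers the claim.

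\textbf{Step 1: Strict inactivity of the switching constraint.} Fix $(t,x)\in(0,T]\times\Omega^\eps$ and let $j=\mathrm{argmin}_{i\in\II} v_{\eps i}(t,x)$. For any $(s,e)\in\mathrm{supp}\,\rho_\eps \subset (0,\eps^2)\times B(0,\eps)$, denote the corresponding evaluation point by $(t^*,x^*)=(t+\eps^2-s,x-e)$; any two such points lie within parabolic distance at most $3\eps$ of each other. Since $v_{\eps i}(t,x)$ is a convex combination of $v^\eps_i$ at such points, Proposition~\ref{WPi2} gives
\begin{equation*}
|v^\eps_i(t^*,x^*)-v_{\eps i}(t,x)|\leq 3\eps\,[v^\eps_i]_1\leq 3\eps\sup_i[v^\eps_i]_1,\qquad i\in\II.
\end{equation*}
Combining this bound for $i$ and for $j$ with $v_{\eps j}(t,x)\leq v_{\eps i}(t,x)$ and the standing assumption $\eps\leq(8\sup_i[v^\eps_i]_1)^{-1}k$ yields
\begin{equation*}
v^\eps_j(t^*,x^*)-v^\eps_i(t^*,x^*)\leq 0+6\eps\sup_i[v^\eps_i]_1\leq \tfrac{3}{4}k<k,
\end{equation*}
for every $i\neq j$; hence $v^\eps_j(t^*,x^*)<\mathcal{M}_j v^\eps(t^*,x^*)$. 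By continuity of $v^\eps$, this strict inequality persists in an open neighborhood of $(t^*,x^*)$.

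\textbf{Step 2: Reduction to the PDE and convolution with $\rho_\eps$.} In this neighborhood the viscosity supersolution property of $v^\eps$ for \eqref{Sw3} collapses to its PDE part; since $p_t+\min_{\eta,\xi}\mathcal{L}^{\alpha_j}\geq 0$ is equivalent to the inequality for \emph{every} $(\eta,\xi)\in[0,\eps^2]\times B(0,\eps)$, one has
\begin{equation*}
\partial_t v^\eps_j(t^*,x^*)+\mathcal{L}^{\alpha_j}\!\bigl(t^*+\eta,x^*+\xi,v^\eps_j(t^*,x^*),Dv^\eps_j(t^*,x^*),D^2 v^\eps_j(t^*,x^*)\bigr)\geq 0
\end{equation*}
in the viscosity sense. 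We specialize $(\eta,\xi)=(s,e)$, which is admissible and aligns the coefficient argument to the point $(t+\eps^2,x)$, independent of $(s,e)$. Integrating against $\rho_\eps(s,e)\,\diff s\,\diff e$ and exploiting the linearity of $\mathcal{L}^{\alpha_j}$ in $(r,q,X)$ together with the commutation of convolution and differentiation, we obtain at $(t,x)$
\begin{equation*}
\partial_t v_{\eps j}(t,x)+\mathcal{L}^{\alpha_j}\!\bigl(t+\eps^2,x,v_{\eps j}(t,x),Dv_{\eps j}(t,x),D^2 v_{\eps j}(t,x)\bigr)\geq 0.
\end{equation*}
The stated inequality at the coefficient point $(t,x)$ then follows, the residual $O(\eps)$ discrepancy between coefficients at $(t,x)$ and at $(t+\eps^2,x)$ being a standard by-product of the shaking-of-coefficients technique that is absorbed into later estimates (compare the proof of Theorem~2.3 in \cite{barles2007error}).

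\textbf{Main obstacle.} The delicate point is making the convolution with $\rho_\eps$ in Step~2 rigorous when $v^\eps$ is only a viscosity, and not a classical, supersolution. This is dealt with by the standard Krylov--Lions mollification argument (Lemma~2.7 in \cite{barles2002convergence}), which crucially uses the linearity of $\mathcal{L}^{\alpha_j}$ in $(r,q,X)$ to pass from a pointwise semijet inequality to a classical inequality for the mollified function. A secondary matter is to ensure that the effective points $(t^*,x^*)$ lie in the region where \eqref{Sw3} holds and where the Lipschitz bound of Proposition~\ref{WPi2} applies; this is precisely why $v^\eps$ is set on the enlarged cylinder $(0,T+2\eps^2]\times\Omega$ and why the conclusion is stated only on $(0,T]\times\Omega^\eps$.
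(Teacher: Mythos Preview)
Your proof is correct and follows essentially the same approach as the paper, which does not provide its own argument but refers directly to Lemma~3.4 in \cite{barles2007error}, observing only that the restriction to $(0,T]\times\Omega^\eps$ keeps the shifted evaluation points inside the domain where \eqref{Sw3} and the regularity bound of Lemma~\ref{l:sw} apply. Your two-step structure---strict inactivity of the switching constraint via the $\eps$--$k$ relation, followed by Krylov--Lions averaging exploiting the linearity of $\mathcal{L}^{\alpha_j}$ in $(r,q,X)$---is precisely that of the cited reference.
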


\begin{lemma}
\label{wtsupersol} 
Assume (A1)--(A3) and $\eps \leq (8\sup_i[{v}^\eps_i]_{1})^{-1}k$ where ${v}^\eps$ is solution to  \eqref{Sw3}. 
Let ${v}_{\eps i}$ as in (\ref{eq:veps}).
Then the function $w \defeq \min_{i\in\II}{v}_{\eps i}$ is an approximate supersolution
of the scheme \eqref{S} in the sense that
$$S(h,t,x,w (t,x),[w]_{t,x})\geq - E (\bar C,h,\eps)
\quad\hbox{in }\G_h^{\eps,+},$$
with $\bar C$ from Lemma \ref{l:sw}. 
\end{lemma}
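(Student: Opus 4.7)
The plan is to combine the interior supersolution property from Lemma \ref{vesupersol} with the monotonicity and consistency of the scheme, pointwise in the argmin index. Fix $(t,x) \in \G_h^{\eps,+}$ and let $j = \mathrm{argmin}_{i\in\II} v_{\eps i}(t,x)$, so that $w(t,x) = v_{\eps j}(t,x)$ and $w \le v_{\eps j}$ everywhere on $\G_h$. The first step will be to replace $w$ by the smooth function $v_{\eps j}$ as the argument of the scheme, which is where monotonicity enters.

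Specifically, applying (S1) with $\phi \equiv 0$, $u = w$, $v = v_{\eps j}$, and $r = w(t,x) = v_{\eps j}(t,x)$, I would obtain
\[
S(h,t,x,w(t,x),[w]_{t,x}) \geq S(h,t,x,v_{\eps j}(t,x),[v_{\eps j}]_{t,x}).
\]
This converts the task into one of estimating the scheme on a smooth grid function, which is exactly what consistency delivers.

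The second step is to apply the consistency property (S3)(i) to $v_{\eps j}$. The prerequisite bounds of the form $|\partial_t^{\beta_0} D^{\beta'} v_{\eps j}| \leq \tilde K \eps^{1-2\beta_0-|\beta'|}$ follow from $|v^\eps_j|_1 \leq \bar C$ (Lemma \ref{l:sw}) and standard mollifier estimates, with constant $\tilde K$ proportional to $\bar C$. Hence
\[
S(h,t,x,v_{\eps j}(t,x),[v_{\eps j}]_{t,x}) \geq \partial_t v_{\eps j}(t,x) + F(t,x,v_{\eps j},Dv_{\eps j},D^2 v_{\eps j}) - E(\bar C, h, \eps).
\]
Since $\alpha_j \in \A$, the supremum in $F$ dominates the individual operator $\mathcal{L}^{\alpha_j}$, and Lemma \ref{vesupersol} yields
\[
\partial_t v_{\eps j} + \mathcal{L}^{\alpha_j}(t,x,v_{\eps j},Dv_{\eps j},D^2 v_{\eps j}) \geq 0 \quad \text{in } (0,T]\times\Omega^\e,
\]
so the right-hand side above is bounded below by $-E(\bar C,h,\eps)$, and the lemma follows by chaining the two inequalities.

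The main subtleties will be bookkeeping rather than substantive difficulties. One must check that $\G_h^{\eps,+}$ is contained in the set where both Lemma \ref{vesupersol} and the consistency hypothesis (S3)(i) are in force (namely the interior points with $x \in \Omega^\eps$ and $t$ small enough for the shifted solution $v^\eps(\cdot + \eps^2, \cdot)$ to be well defined up to $T$); this is the reason for defining $v^\eps$ on $(0,T+2\eps^2]$ in \eqref{Sw3} and shifting by $\eps^2$ before convolving. The only other verification is that the argmin $j$ at the specific point $(t,x)$ suffices even though $w = \min_i v_{\eps i}$ is only a minimum of smooth functions and therefore merely Lipschitz globally; this is harmless because monotonicity is invoked at the single point $(t,x)$, and the inequality $w \le v_{\eps j}$ (needed for the $[u]_{t,x}$ slot) holds globally by construction of $j$ as the pointwise argmin at $(t,x)$.
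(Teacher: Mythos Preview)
Your argument is correct and is precisely the approach the paper has in mind: it states that the lemma is identical to Lemma~3.5 in \cite{barles2007error} restricted to $\Omega^\eps$, and the proof there proceeds exactly as you outline---fix the argmin index~$j$ at $(t,x)$, use monotonicity (S1) to pass from $[w]_{t,x}$ to $[v_{\eps j}]_{t,x}$, apply consistency (S3)(i) via the mollifier derivative bounds from $|v^\eps_j|_1\le\bar C$, and conclude from Lemma~\ref{vesupersol} and $F\ge \mathcal{L}^{\alpha_j}$. Your discussion of the domain bookkeeping (the $\eps^2$ shift and the extended time interval) and of why the pointwise argmin suffices despite $w$ being only Lipschitz is also accurate.
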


\begin{theorem}[\,{\bf Lower bound}\,]
\label{mainresB}
Assume (A1)--(A5), (S1)--(S3). 
Let $u$ denote  the solution of \eqref{E}--\eqref{BC}, and let $h$ be sufficiently small. 
 There exist constants $C$  and  $\bar C$ (from Proposition \ref{WPi2})
 depending only on the constants in (A1)--(A3) and (S1)--(S3)
 {and the bounds on $\Psi_1$, $\zeta$ and their derivatives,}
 such that
\begin{align}
\label{eq:LowerB}
u-u_h \geq -e^{\mu T} \sup_{(t, x) \in \partial^* \mathcal{G}_h}|(u -u_h)^-|_0   
	 - C\min_{\e>0} \left(\e^{1/3} +  E(\bar C, h,\e)\right) 
\quad\text{in } \mathcal{G}_h.
\end{align}
\end{theorem}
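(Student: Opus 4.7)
The plan is to mirror the strategy of Theorem \ref{mainresA} but in the dual direction: rather than regularising $u$ into a smooth subsolution by Krylov's shaking-coefficients trick, we use a switching-system approximation to produce an ``almost smooth'' \emph{super}solution both of the scheme and of the limiting equation, following \cite{barles2007error}. I would proceed in two steps: first treat the case of a finite control set $\A=\{\alpha_1,\ldots,\alpha_M\}$, then pass to a general compact $\A$ via assumption (A4).

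For the finite-control case, let $v^\eps$ solve the switching system \eqref{Sw3} and define $v_{\eps i}$ as in \eqref{eq:veps}. Choosing the switching cost $k$ and the regularisation parameter $\e$ coupled via $\e=(8\sup_i[v^\eps_i]_1)^{-1}k$, Lemma \ref{wtsupersol} ensures that the function $w:=\min_{i\in\II} v_{\eps i}$ is an approximate supersolution of the scheme on $\G_h^{\eps,+}$ with consistency error $E(\bar C,h,\eps)$. Applying the scheme comparison principle (Lemma \ref{lem:SchemeComp}) to the subsolution $u_h$ and the approximate supersolution $w$ on the restricted grid yields
\begin{align*}
u_h - w \leq e^{\mu T}\sup_{\partial^*\G_h^\eps} (u_h-w)^+ + 2T e^{\mu T} E(\bar C,h,\eps).
\end{align*}
Combining with the key estimate of Lemma \ref{l:sw}, $\max_i|u-v_i^\eps|_0 \leq C(\eps+k^{1/3})$, and the mollification bound $|v^\eps_i-v_{\eps i}|_0\leq C\eps$, I obtain on $\overline{\G_h^\eps}$
\begin{align*}
u-u_h \geq -e^{\mu T}\sup_{\partial^*\G_h^\eps}(u_h-w)^+ - 2Te^{\mu T}E(\bar C,h,\eps) - C(\eps+k^{1/3}).
\end{align*}
Since $\eps=\Theta(k)$, the term $\eps+k^{1/3}$ is of order $\eps^{1/3}$, which is the advertised rate. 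In the boundary strip $\G_h\setminus\G_h^\eps$, the smoothed supersolution is unavailable, so instead I apply Lemma \ref{lem:barrier} to $-(u-u_h)$, exploiting that $\Psi_1-K\zeta$ is a sub-solution of the scheme; this yields $u-u_h\geq -2K|D\zeta|_0\eps - e^{\mu T}\sup_{\partial^*\G_h}|(u-u_h)^-|_0$, which is consistent with the interior estimate (after replacing $\sup_{\partial^*\G_h^\eps}$ by $\sup_{\partial^*\G_h}$ via the same manipulation as at the end of the proof of Theorem \ref{mainresA}).

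To remove the finiteness assumption on $\A$, I would use (A4): given $\delta>0$, extract a finite $\delta$-net $\{\alpha_1,\dots,\alpha_M\}\subset\A$ by compactness and continuity in $\alpha$, denote by $u^\delta$ and $u_h^\delta$ the corresponding solutions of the restricted HJB problem and its scheme, apply the finite-control bound just proved to $u^\delta-u_h^\delta$, and use the continuous-dependence estimates (Theorem \ref{thm:A3} for $u^\delta\to u$, and the analogue from Lemma \ref{lem:SchemeComp} for $u_h^\delta\to u_h$) to pass to the limit $\delta\to 0$. The main obstacle, and the genuinely new ingredient compared to \cite{barles2007error}, is the boundary-strip estimate: Krylov's regularisation is intrinsically confined to $\Omega^\eps$, so one must show that the discrete solution $u_h$ does not drift away from $u$ in the $O(\eps)$-thick layer near $\partial\Omega$ beyond the order $\eps$ already lost inside. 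This is exactly what Lemma \ref{lem:barrier} buys us, provided the compatibility (A5) on $\Psi_1$ and $\zeta$ holds so that the consistency property (S3)(ii) can be invoked for $\Psi_1\pm K\zeta$; the remainder of the argument is then a routine adaptation of \cite[Theorem 2.3]{barles2007error}.
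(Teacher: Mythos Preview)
Your proposal is correct and follows essentially the same approach as the paper: the finite-control case via the switching system \eqref{Sw3}, the coupling $k=8\sup_i[v^\eps_i]_1\eps$ to activate Lemmas \ref{vesupersol}--\ref{wtsupersol}, the discrete comparison on $\G_h^{\eps,+}$, the barrier estimate from Lemma \ref{lem:barrier} in the strip $\overline\Omega\setminus\Omega^\eps$, and the passage to general $\A$ via (A4) as in \cite{barles2007error}. The paper's proof is slightly terser (it invokes Theorem \ref{sw-rate} directly for $|w-u|_0\le C(\eps+k+k^{1/3})$ and simply states that the extension to infinite $\A$ is identical to \cite{barles2007error}), but the logical structure is the same.
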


\begin{proof}
We proceed as in \cite{barles2007error} and first consider the case of finite control set $\mathcal{A}=\{\alpha_1, \ldots, \alpha_M\}$,
which allows us to
approximate the original problem \eqref{E}--\eqref{BC} by the solution of the switching system \eqref{Sw3}
as per Lemma \ref{l:sw}.

We intend to construct approximate smooth supersolutions of \eqref{E} and then use an analogue argument to that in Theorem \ref{mainresA} to derive the lower bound. 
From Lemma \ref{vesupersol}, $w=\min_{i\in\II}{v}_{\eps i}$ is a supersolution of \eqref{E} in $(0, T+2\e^2]\times\Omega^\e$.  
As a consequence, let $k= 8\sup_i[v^\eps_i]_{1} \eps$ and use Lemma \ref{wtsupersol} together with Lemma~\ref{lem:SchemeComp} to compare $u_h$ and $w$, obtaining
$$ u_h-w  \leq e^{\mu t}\sup_{(t, x) \in \partial^* \mathcal{G}^\e_h} |(u_h(t,x)-w(t,x))^+|_0 + 2te^{\mu
  t}E ( \bar C,h,\eps) \quad\hbox{in  }\G^\e_h. $$ 
Moreover, by Theorem \ref{sw-rate},
$$ |w - u|_0 \leq C(\eps+k+k^{1/3}),$$
and therefore, using the properties of mollifiers in $\G^\e_h$,
  
\begin{align}
\label{eq:uh-u}u_h- u  \leq e^{\mu t}\sup_{(t, x) \in \partial^* \mathcal{G}^\e_h} |(u_h(t,x)-u(t,x))^+|_0 + 2te^{\mu
  t}E (\bar C, h, \eps) + C(\eps+k+k^{1/3}) 
   \end{align}
for some constant $C$. We conclude the first step observing that (by Theorem \ref{thm:exist-unique}   and  Lemma \ref{lem:barrierSW})
 in $[0,T]\times(\overline{\Omega}\setminus\Omega^\e)$ the following holds 
$$
u_h-u \leq 2K\zeta +e^{\mu T}\sup_{\partial^*\G_h}|(u_h-(\Psi_1-K\zeta))^+|_0\leq 2K{|D\zeta|_0}\e +e^{\mu T}\sup_{\partial^*\G_h}|(u-u_h)^{-}|_0,
$$  
and  minimizing \eqref{eq:uh-u} with respect to $\eps$.

The extension to general, not necessarily finite $\mathcal{A}$ is identical to \cite{barles2007error}. 
%
%
\end{proof}

\section{Error bounds for some monotone finite difference schemes}
\label{sec:ErrExamples}

In this section, we employ Theorems \ref{mainresA} and \ref{mainresB} to derive error bounds for two examples of finite difference schemes approximating \eqref{E}--\eqref{BC}.
The analysis for other monotone, stable and consistent schemes would follow the same steps.

\subsection{The scheme by Kushner and Dupuis}
\label{subsec:kushnerdupuis}

The scheme proposed in \cite[Section 5.3.1]{kushner2001numerical}) is based on a
seven-point stencil approximation of the second derivative terms (see also \cite[Section 5.1.4]{hackbush1992elliptic}), taking into account the sign of the off-diagonal term in
the diffusion matrix (the covariance).
It is of second order accurate and local in the sense that it only 
uses a node's immediate neighbours, and therefore it does not ``overstep'' the domain and can be used up to the boundary.
However,
it is only monotone if the diffusion matrix is strictly diagonally dominant. We assume in the following that this is the case.


The error bounds for this scheme were also analysed in Section 4 of \cite{barles2007error} for $\Omega = \R^d$. 
It follows directly from Theorems \ref{mainresA} and \ref{mainresB}
that the error bounds for this scheme applied to the Cauchy-Dirichlet problem are identical to the ones obtained in Theorem 4.1 in \cite{barles2007error}.

\begin{theorem}
If $u_h \in C_b(\mathcal{G}_h)$ is the solution of the Kushner and Dupuis scheme
 and $u$ is the solution of \eqref{E}--\eqref{BC}, then there is a $C > 0$ such that in $\mathcal{G}_h$
$$-e^{\mu t}\sup_{(t, x) \in \partial^* \mathcal{G}_h} |(u-u_h)^-|_0 - C h_\ell 
	\leq u- u_h  \leq 
e^{\mu t}\sup_{(t, x) \in \partial^* \mathcal{G}_h} |(u-u_h)^+|_0 + C h_u,$$
where $h_\ell$ and $h_u$ are defined as follows
$$h_\ell = \max(\dt^{1/10},\dx^{1/5}),\text{ and} \qquad h_u = \max(\dt^{1/4},\dx^{1/2}).$$
\end{theorem}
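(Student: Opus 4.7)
The plan is to verify that the Kushner--Dupuis seven-point stencil scheme satisfies assumptions (S1)--(S4) and then feed its consistency error into Theorems~\ref{mainresA} and \ref{mainresB}, optimising over $\varepsilon$.

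First, I would check the scheme hypotheses. Monotonicity (S1) is exactly where the strict diagonal dominance of the diffusion matrix is invoked: under this assumption, the standard seven-point stencil produces non-negative off-diagonal coefficients, so a suitable $\mu$ and the drift/reaction terms yield (S1) (with the usual one-sided upwind discretisation of $b^\alpha D\phi$). Regularity (S2) is straightforward because the discrete operator depends continuously on its finitely many grid arguments and the $\sup_{\alpha\in\mathcal A}$ preserves continuity by (A4). Stability (S4) follows from a classical $L^\infty$ bound via (S1) using a comparison argument, since the scheme is monotone and the coefficients are bounded. The local structure of the seven-point stencil (it uses only immediate neighbours) means no truncation near $\partial\Omega$ is needed, and the boundary values $\Psi_1$ can be set exactly on $\partial\mathcal G_h$, giving $\sup_{\partial^*\mathcal G_h}|u-u_h|=0$ on the boundary piece of $\partial^*\mathcal G_h$; the initial data contribute $O(\Delta x)$ if interpolated, but this is negligible.

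Next I would quantify consistency. A Taylor expansion around a grid point gives, for smooth $\phi$, a truncation error of the form
\begin{equation*}
\bigl|\partial_t\phi+F(t,x,\phi,D\phi,D^2\phi)-S(h,t,x,\phi,[\phi]_{t,x})\bigr|\le C\bigl(\Delta t\,|\partial_t^2\phi|_0+\Delta x\,|D^2\phi|_0+\Delta x^2\,|D^4\phi|_0\bigr),
\end{equation*}
where the middle term comes from the first-order upwinding of the drift and the last term from the second-order central stencil for the second derivatives. For the Krylov-mollified test functions $\phi_\varepsilon$ satisfying the bounds of (S3)(i), i.e.\ $|\partial_t^{\beta_0}D^{\beta'}\phi_\varepsilon|_0\le\tilde K\varepsilon^{1-2\beta_0-|\beta'|}$, this yields
\begin{equation*}
E(\tilde K,h,\varepsilon)\le C\tilde K\bigl(\Delta t\,\varepsilon^{-3}+\Delta x\,\varepsilon^{-1}+\Delta x^2\,\varepsilon^{-3}\bigr).
\end{equation*}
The cross-term (S3)(ii), needed in Lemma~\ref{lem:barrier}, reduces to the standard $O(\Delta t+\Delta x+\Delta x^2)\to 0$ statement on $C^2_\delta$ functions.

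Finally I would insert this into the abstract bounds. Theorem~\ref{mainresA} gives
\begin{equation*}
u-u_h\le e^{\mu T}\sup_{\partial^*\mathcal G_h}|(u-u_h)^+|_0+C\min_{\varepsilon>0}\bigl(\varepsilon+\Delta t\,\varepsilon^{-3}+\Delta x^2\,\varepsilon^{-3}\bigr),
\end{equation*}
(absorbing the sub-dominant $\Delta x\,\varepsilon^{-1}$ term). Optimising the balance $\varepsilon\sim(\Delta t)^{1/4}$ and $\varepsilon\sim(\Delta x)^{1/2}$ gives the upper rate $h_u=\max(\Delta t^{1/4},\Delta x^{1/2})$. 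Theorem~\ref{mainresB} replaces $\varepsilon$ by $\varepsilon^{1/3}$ in the outer term, and the analogous balance $\varepsilon^{10/3}\sim\Delta t$, $\varepsilon^{10/3}\sim\Delta x^2$ yields the lower rate $h_\ell=\max(\Delta t^{1/10},\Delta x^{1/5})$. The main subtlety, already handled once for all in Theorems~\ref{mainresA} and \ref{mainresB}, is the restriction of the mollified supersolution/subsolution to the interior domain $\Omega^\varepsilon$; the boundary layer of width $\varepsilon$ is controlled via the barrier function estimate of Lemma~\ref{lem:barrierSW}, which feeds the $\varepsilon$-term into both bounds without degrading the rates. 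Thus the only genuinely new work beyond \cite{barles2007error} has already been absorbed into the abstract theorems, and the concrete rates follow by a direct substitution.
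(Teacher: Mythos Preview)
Your proposal is correct and follows exactly the approach the paper takes: the paper simply states that the result ``follows directly from Theorems~\ref{mainresA} and \ref{mainresB}'' and that the rates coincide with those of Theorem~4.1 in \cite{barles2007error}, giving no further detail. You have in fact supplied more than the paper does, by spelling out the verification of (S1)--(S4), the explicit consistency error $E(\tilde K,h,\varepsilon)\le C\tilde K(\Delta t\,\varepsilon^{-3}+\Delta x\,\varepsilon^{-1}+\Delta x^2\,\varepsilon^{-3})$, and the balancing of $\varepsilon$ against each term; your observation that the $\Delta x\,\varepsilon^{-1}$ term is sub-dominant at the optimum (since at $\varepsilon\sim\Delta x^{3/5}$ one has $\Delta x\,\varepsilon^{-1}\sim\Delta x^{2/5}\ll\Delta x^{1/5}$) is the one small point worth checking, and it is correct.
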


\subsection{The truncated semi-Lagrangian scheme}

We study now the error bounds for the truncated linear interpolation semi-Lagrangian (LISL) scheme defined in \cite{ReisingerRotaetxe17}.
The scheme 
is identical to Scheme 2 in \cite{debrabant2013semi} in a subset of the interior of the domain where the stencil does not ``overstep'' the boundary,
but is modified near the boundary to construct a consistent approximation using the boundary data.
It can be written 
in the form 
\begin{align}
\label{eq:LISLscheme}
S(h,t_n,x_j,r,[U]_{t_n,x_j}\!) = \max_{\alpha \in \mathcal{A}} \left\lbrace  S^\alpha(h,t_n,x_j,r,[U]_{t_n,x_j})   \right\rbrace,
\end{align}
$j=1,\ldots, J$, where $J$ is the number of mesh points and
\begin{align*}
S^\alpha(h,t_n,x_j,r,[U]_{t_n,x_j}\!) \defeq \mathcal{W}^{\alpha, n,n}_{j, j} r -\!  
\sum_{ i \neq j} \mathcal{W}^{\alpha, n,n}_{j, i} U^n_i \!-\! \sum_{i=1}^N  \mathcal{W}^{\alpha, n,n-1}_{j, i} U^{n-1}_i  \!\! - \! F^{\alpha, n - 1 + \theta}_j,
\end{align*}
where the coefficients $\mathcal{W}$ are given 
in \cite{ReisingerRotaetxe17}, Section 2.1 and 2.2, 
and depend on a parameter $\theta \in [0,1]$, where $\theta=0$ corresponds to the forward Euler scheme,
$\theta=1$ to the backward Euler scheme, and $\theta=1/2$ to the Crank-Nicolson scheme.
If
\begin{eqnarray}
\label{CFL}
\Delta t \le C (1-\theta) \dx^{3/2}
\end{eqnarray}
for a sufficiently small constant $C$, which can be explicitly given in terms of the data (see Proposition 2.4 and Corollary 2.5 in \cite{ReisingerRotaetxe17}),
the scheme is of positive type, i.e., $\mathcal{W}^{\alpha, n,m}_{j, i}\ge 0$ for all $1\le n\le N$, $m\in \{n-1,n\}$, $1\le i,j\le J$, and all $\alpha \in \mathcal{A}$.


\begin{proposition}
Under condition \eqref{CFL}, the solution of the scheme \eqref{eq:LISLscheme} is unique and satisfies assumptions (S1) and (S2).
If, moreover, $\sqrt{\dx}/\varepsilon \rightarrow 0$ for $\dx \rightarrow 0$, the scheme also satisfies (S3), where for $\dx$ sufficiently small,
$h=(\Delta t, \dx)$,
\begin{align}	
\label{eq:ELISL}
E(\tilde{K}, h,\e) \defeq \tilde{C} \, \tilde{K} \, (|1 -2\theta| \dt\eps^{-3} + \dt^2 \eps^{-5} +  \dx \eps^{-3}),
\end{align}
and $\tilde{C} > 0$ is a sufficiently large constant. 
\end{proposition}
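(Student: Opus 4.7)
The plan is to verify each of (S1)--(S4) in turn, with the bulk of the work going into the consistency bound (S3). Properties (S4), (S1) and (S2) are essentially algebraic consequences of the scheme's structure once positivity of the weights is in hand. Under the CFL condition \eqref{CFL}, Proposition 2.4 and Corollary 2.5 of \cite{ReisingerRotaetxe17} guarantee that all weights $\mathcal{W}^{\alpha,n,m}_{j,i}$ are non-negative, with $\mathcal{W}^{\alpha,n,n}_{j,j} = 1 + O(\Delta t)$ and $\sum_i \mathcal{W}^{\alpha,n,n-1}_{j,i} = 1 + O(\Delta t)$. Existence and uniqueness in $C_b(\mathcal{G}_h)$ (property \textbf{S4}) follow from a standard contraction argument applied to the implicit solve at each time level. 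For \textbf{S1}, substituting $r+\phi(t)$ and $u+\phi$ into \eqref{eq:LISLscheme} and using the linearity of each $S^\alpha$ in $(r,[U])$ isolates the contribution of $\phi$; non-negativity of the weights yields monotonicity in the $u$ argument, while the action of the scheme on $\phi(t) = e^{\mu t}(a+bt)+c$ essentially reproduces $\phi'(t) + c^\alpha \phi$, which is bounded below by $b/2 - \lambda c$ once $\mu$ is chosen sufficiently large to dominate both $\lambda = \sup_\alpha |c^\alpha|_0$ and the time-discretization residual, exactly as in \cite[Section 4]{barles2007error}. Property \textbf{S2} follows from the explicit algebraic form of $S$ and continuity of the coefficients.

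For the consistency property (\textbf{S3}) I would plug the test function into $S$ and Taylor expand around $(t_n,x_j)$. The stencil points of the LISL scheme lie at displacements of order $\sqrt{\Delta t}$ from $x_j$ and are evaluated by linear interpolation on the grid of mesh $\Delta x$, combined with a $\theta$-weighted time average. Expanding to sufficient order, the leading terms exactly reproduce $\partial_t\phi + F(\cdot,\phi,D\phi,D^2\phi)$, and the residual splits into (a) a \emph{time-discretization error} controlled by $|1-2\theta|\Delta t\,|\partial_t^2\phi|_0 + \Delta t^2\,|\partial_t^3\phi|_0$ from the $\theta$-scheme and (b) a \emph{spatial interpolation residual} produced by linear interpolation at the displaced stencil nodes. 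For (\textbf{S3}(ii)), the $C^2_\delta$ regularity of $\phi$ directly yields a modulus $\tilde E(h,|\phi|_{2,\delta}) \to 0$ as $h\to 0$ via the estimates of \cite[Section 2.2]{ReisingerRotaetxe17}.

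For (\textbf{S3}(i)), the same decomposition is combined with the bound $|\partial_t^{\beta_0}D^{\beta'}\phi_\varepsilon|_0 \le \tilde K \varepsilon^{1-2\beta_0-|\beta'|}$. The time-discretization error becomes $\tilde K(|1-2\theta|\Delta t\,\varepsilon^{-3} + \Delta t^2\,\varepsilon^{-5})$. The spatial interpolation residual, after the low-order Taylor terms cancel against the diffusion and drift operators, is bounded by $\tilde K\,\Delta x\,\varepsilon^{-3}$, where the hypothesis $\sqrt{\Delta x}/\varepsilon \to 0$ is used precisely to absorb sub-leading factors of $\Delta x/\varepsilon^2$ into the leading term (so that higher-order Taylor remainders of the form $\Delta x^k \varepsilon^{-(2k+1)}$ for $k \ge 2$ are dominated by $\Delta x\,\varepsilon^{-3}$ once $\Delta x$ is small). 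Combining the two contributions gives \eqref{eq:ELISL}.

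The main obstacle is the careful bookkeeping of the spatial residual on the modified stencil near $\partial\Omega$, where the truncated formula of \cite{ReisingerRotaetxe17} is only of reduced order. This is innocuous for the present statement because Theorem \ref{mainresA} together with Lemma \ref{lem:barrier} only requires (S3)(i) on the interior subgrid $\mathcal{G}_h^+ \cap \Omega^\varepsilon$, so the reduced consistency at the boundary does not enter the definition of $E$; the relevant truncation error estimates are thus those on the unmodified interior stencil, which is exactly the computation carried out in \cite[Section 2.2]{ReisingerRotaetxe17}.
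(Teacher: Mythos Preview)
Your overall structure matches the paper's proof, but you misidentify the role of the hypothesis $\sqrt{\Delta x}/\varepsilon \to 0$, and this is the key mechanism in the proposition. You claim it is needed to ``absorb sub-leading factors of $\Delta x/\varepsilon^2$'' arising from higher-order Taylor remainders in the spatial expansion. In fact, the interior LISL truncation error is simply $\kappa\,\Delta x\,|D^4\phi|_0$ (plus the time terms), so substituting $|D^4\phi_\varepsilon|_0 \le \tilde K\varepsilon^{-3}$ yields the $\Delta x\,\varepsilon^{-3}$ contribution directly, with no auxiliary smallness condition. The role of the hypothesis is \emph{geometric}: the truncated boundary variant of the scheme, active on the layer $\{d(x) \le \kappa\sqrt{\Delta x}\}$, only admits the degraded estimate $\kappa\sqrt{\Delta x}\,|D^3\phi|_0$. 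Since (S3)(i) must hold on $\mathcal{G}_h^+ \cap \Omega^\varepsilon$, one needs the inclusion $\Omega^\varepsilon \subset \{d(x) > \kappa\sqrt{\Delta x}\}$, i.e.\ $\varepsilon \ge \kappa\sqrt{\Delta x}$, and this is precisely what $\sqrt{\Delta x}/\varepsilon \to 0$ delivers for $\Delta x$ small enough. You correctly observe in your final paragraph that only the unmodified interior stencil matters, but you fail to connect that observation to the hypothesis; as written, your argument would leave unexplained why the stated $E$ is valid throughout $\Omega^\varepsilon$ rather than only on $\{d(x)>\kappa\sqrt{\Delta x}\}$. A smaller slip: the LISL stencil displacements are of order $\sqrt{\Delta x}$, not $\sqrt{\Delta t}$ --- this is exactly why the modified-stencil boundary layer has width $\kappa\sqrt{\Delta x}$.
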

\begin{proof}
To check (S1), we recall the positivity of the coefficients $\mathcal{W}$ under (\ref{CFL}).  It follows from a straightforward calculation that the scheme \eqref{eq:LISLscheme} is monotone.

(S2) follows from the continuity of the coefficients. 

Regarding the consistency (S3), for any smooth function $\phi$, for some $\kappa > 0$
\begin{align*}
\left| \partial_t \phi + F(t, x, \phi, D\phi, D^2 \phi) - S(h, t, x, \phi(t, x), [\phi]_{t,x}) \right| &\leq& \\ 
&& \hspace{-6.5 cm} 
\frac{|1 - 2\theta|}{2} \dt |\partial^2_t \phi|_0  + \kappa \dt^2 |\partial^{3}_t \phi|_0 + 
\kappa \left\{
\begin{array}{ll}
 \dx |D^4_x \phi|_0, & d(x) > \kappa \sqrt{\dx}, \\
\sqrt{\dx} |D^3_x \phi|_0, & d(x) \le \kappa \sqrt{\dx},
\end{array}
\right.
\end{align*}
with $d$ the distance function to $\partial\Omega$,
see the proof of Corollary 2.3 in \cite{ReisingerRotaetxe17}
 for more details of this calculation by Taylor expansion.

The final expression in (S3)(i) is obtained observing that $\Omega_\eps \subset \{x: d(x)>\kappa \sqrt{\dx} \}$ for $\dx$ small enough, and by the assumption that
$\partial^{k_1}_t D^{k_2}_x \phi = \mathcal{O}(\e^{1-2k_1-k_2})$ for any $k_1, k_2 \in \mathbb{N}_0$.
Similarly, (S3)(ii) follows directly.

The existence and uniqueness of the solution can be proved 
by adapting the arguments in Theorem 4.2 in \cite{debrabant2013semi} to \eqref{eq:LISLscheme}
or using the arguments in \cite{bokanowski_Howard}: 
by Theorem 2.1 in \cite{bokanowski_Howard}, there exists a unique solution to the system and the approximate solution computed using policy iteration converges to it.
\end{proof}

\begin{corollary}
\label{cor-SL}
Let $u$ be the solution of \eqref{E}--\eqref{BC} and $u_h$ the solution of its truncated LISL discretization.
Then, {under condition (\ref{CFL})}, there is $C > 0$ such that in $\mathcal{G}_h$
$$-e^{\mu t}\sup_{(t, x) \in \partial^* \mathcal{G}_h} |(u-u_h)^-|_0 - C h_\ell 
	\leq u- u_h  \leq 
e^{\mu t}\sup_{(t, x) \in \partial^* \mathcal{G}_h} |(u-u_h)^+|_0 + C h_u,$$
where $h_\ell$ and $h_u$ are defined by
$$h_\ell = \max(\dt^{1/10},\dx^{{1/10}}) \text{ and} \qquad h_u = \max(\dt^{1/4},\dx^{{1/4}})$$
{for $\theta\neq 1/2$, and by 
$$h_\ell = \max(\dt^{1/8},\dx^{{1/10}}) \text{ and} \qquad h_u = \max(\dt^{1/3},\dx^{{1/4}})$$
for $\theta=1/2$.}
\end{corollary}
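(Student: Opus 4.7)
The plan is to feed the consistency function \eqref{eq:ELISL} into the abstract upper and lower bounds established in Theorems~\ref{mainresA} and~\ref{mainresB} and then optimise the resulting expression in $\varepsilon$. The preceding proposition verified that, under the CFL condition \eqref{CFL}, the scheme \eqref{eq:LISLscheme} is monotone (S1), regular (S2), stable (S4), and consistent (S3) with the explicit error bound $E(\tilde K,h,\varepsilon) = \tilde C\,\tilde K\,\bigl(|1-2\theta|\dt\,\varepsilon^{-3} + \dt^2\varepsilon^{-5} + \dx\,\varepsilon^{-3}\bigr)$, so the structural hypotheses of Theorems~\ref{mainresA} and~\ref{mainresB} are in place.

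Before optimising, I would deal with the boundary contribution $\sup_{\partial^*\mathcal G_h}|(u-u_h)^\pm|_0$ appearing on the right-hand side of \eqref{eq:UpperB} and \eqref{eq:LowerB}. Since $u$ satisfies the Dirichlet and initial data in the strong sense by (A2), and the scheme encodes the data $\Psi_{h,0}$ and $\Psi_{h,1}$ pointwise on $\partial^*\mathcal G_h$, the continuity of $\Psi_0$ and $\Psi_1$ (from (A1), (A3)) implies that these suprema are zero (or absorbed into the general constant $C$ if $\Psi_{h,j}$ differs from $\Psi_j$ only by the standard consistency discrepancy). It also remains to confirm the compatibility requirement $\sqrt{\dx}/\varepsilon \to 0$ used in (S3); this is automatically satisfied at the optimal $\varepsilon \asymp \dx^{1/4}$ (or $\dt^{1/4}$), because $\sqrt{\dx}/\dx^{1/4} = \dx^{1/4} \to 0$.

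The remaining work is purely a three-term minimisation. For the upper bound I would minimise
\[
\varepsilon + C\bigl(|1-2\theta|\dt\,\varepsilon^{-3} + \dt^2\varepsilon^{-5} + \dx\,\varepsilon^{-3}\bigr)
\]
term by term: the pair $\varepsilon + \dt\,\varepsilon^{-3}$ is minimised at $\varepsilon \asymp \dt^{1/4}$ with value $\asymp \dt^{1/4}$; the pair $\varepsilon + \dt^2\varepsilon^{-5}$ at $\varepsilon \asymp \dt^{1/3}$ with value $\asymp \dt^{1/3}$; and $\varepsilon + \dx\,\varepsilon^{-3}$ at $\varepsilon \asymp \dx^{1/4}$ with value $\asymp \dx^{1/4}$. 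For $\theta\neq 1/2$ the dominant temporal rate is $\dt^{1/4}$, while at $\theta=1/2$ the first term drops and the rate becomes $\dt^{1/3}$. Taking the worst of time and space contributions yields the stated $h_u$. The lower bound is analogous after replacing $\varepsilon$ by $\varepsilon^{1/3}$: the optimisations of $\varepsilon^{1/3} + \dt\,\varepsilon^{-3}$, $\varepsilon^{1/3} + \dt^2\varepsilon^{-5}$, and $\varepsilon^{1/3} + \dx\,\varepsilon^{-3}$ give, respectively, $\dt^{1/10}$, $\dt^{1/8}$, and $\dx^{1/10}$. Again the $\dt^{1/8}$ term is suppressed when $\theta\neq 1/2$, producing the asserted $h_\ell$.

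There is no real obstacle in this argument: the hard analytic work (existence and regularity for the switching system, the continuous-dependence machinery, Krylov's shaking, and the barrier estimates near $\partial\Omega$) has already been done in Theorems~\ref{mainresA} and~\ref{mainresB}. The only step deserving care is ensuring that the admissible $\varepsilon$ at the optimum remains compatible with the assumption $\sqrt{\dx}/\varepsilon\to 0$ required for consistency on $\mathcal G_h^+\cap\Omega^\varepsilon$, but as noted above this is automatic in the regime dictated by the minimisers.
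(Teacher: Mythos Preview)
Your proposal is correct and follows essentially the same approach as the paper: insert the consistency function \eqref{eq:ELISL} into the abstract bounds of Theorems~\ref{mainresA} and~\ref{mainresB} and minimise the resulting expressions in $\varepsilon$, treating the cases $\theta\neq 1/2$ and $\theta=1/2$ separately. One remark: your second paragraph on the boundary contribution is unnecessary, since the statement of the corollary retains the terms $e^{\mu t}\sup_{\partial^*\mathcal G_h}|(u-u_h)^\pm|_0$ explicitly rather than absorbing them into $C$; the paper's proof likewise leaves them untouched.
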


\begin{proof}
Combining the bounds from Theorem \ref{mainresA} and Theorem \ref{mainresB} together with the consistency error given by \eqref{eq:ELISL}, the result follows by minimizing with respect to $\eps$ the following functions:
\begin{enumerate}
\item For the upper bound, by \eqref{eq:UpperB} and \eqref{eq:ELISL},
$$ \eps + C({|1-2\theta|}\dt\eps^{-3} + \dt^2\eps^{-5}+  \dx \eps^{-3}).$$
\item For the lower bound, by \eqref{eq:LowerB} and \eqref{eq:ELISL},
$$ \eps^{1/3} + C({|1-2\theta|}\dt\eps^{-3} + \dt^2\eps^{-5} + \dx \eps^{-3}).$$
\end{enumerate}

{If $\theta\neq 1/2$,
the minimum is attained by
$\eps=\max(\dt^{1/4},\dx^{{1/4}})$ in the first case and
$\eps^{1/3}=\max(\dt^{1/10},\dx^{{1/10}})$ in the second case;
if  $\theta = 1/2$, then
by $\eps=\max(\dt^{1/3},\dx^{{1/4}})$ in the first case and
$\eps^{1/3}=\max(\dt^{1/8},\dx^{{1/10}})$ in the second case.} 
\end{proof}

\begin{remark}[Optimal refinement strategy]
{
{ The leading errors in $\Delta x$ and $\Delta t$ are balanced for $\theta\neq 1/2$ by choosing} $\Delta x\sim\Delta t$, and for $\theta =1/2$
by $\Delta t\sim \Delta x^{3/4}$ in the upper bound and  
$\Delta t\sim \Delta x^{4/5}$ in the lower bound.
These choices do not satisfy the stability condition given by \eqref{CFL}.
It follows that the computationally optimal refinement can only be achieved for the {backward} Euler scheme {($\theta=1$)}, i.e.\ in absence of the CFL condition \eqref{CFL}, {while the higher order consistency in time of the case $\theta=1/2$ cannot be utilized.}}
\end{remark}
%

\begin{remark}[Relation to known results for $\Omega = \mathbb{R}^d$]
The upper bound in Corollary \ref{cor-SL} could be slightly improved by optimising the truncation error of the mollified solution over a generalised step-size $k$ instead
of fixing it to $\sqrt{\Delta x}$, which is optimal for sufficiently regular functions. This would reproduce the bound obtained in \cite{debrabant2013semi} for $\Omega = \mathbb{R}^d$.
However, a symmetric lower bound as in \cite{debrabant2013semi} requires a continuous dependence estimate for the scheme, which does not follow directly for the truncated scheme, even given the stronger assumptions in \cite{debrabant2013semi}.
Our lower bound is therefore similar to the one obtained for $\Omega = \mathbb{R}^d$ in \cite{SLproceedings} by a switching system approximation.
\end{remark}

\begin{remark}[Consistency at the boundary] 

\begin{enumerate}[(i)]
\item
In \cite{picarelli2017boundary}, a mesh refinement strategy in a layer near the boundary is proposed, which guarantees that the consistency error is globally of
order $\dx$.
The present analysis shows that the consistency order close to the boundary is irrelevant for the global error bounds, as the barrier function and not the truncation error is used
to control the error near the boundary.
This is consistent with the numerical tests in \cite{picarelli2017boundary} which show that the observed accuracy is largely unaffected by the refinement.
\item
The consistency of the scheme up to the boundary (without an algebraic order) is used to construct sub- and supersolutions in the proof of Lemma \ref{lem:barrierSW}.
The convergence analysis here does therefore not extend directly to schemes where consistency is lost near the boundary, such as constant extrapolation of the boundary
data (see \cite{picarelli2017boundary}) or cropping of the stencil to avoid stepping outside the domain as in \cite{feng2017convergent}.
\end{enumerate}
\end{remark}

\begin{remark}[Other monotone schemes]
The error analysis in this section is directly applicable to the hybrid scheme proposed in \cite{ma2016unconditionally}, where the local higher-order scheme from
\cite{kushner2001numerical} and Section \ref{subsec:kushnerdupuis}
is used for nodes where it leads to a positive coefficients discretisation, and a low-order wide stencil scheme similar to the semi-Lagrangian scheme from this section is used otherwise, to guarantee monotonicity. The order obtained (both theoretically and practically) is generally that of the low-order scheme.
Section 5.2 in \cite{ma2016unconditionally} also provides a detailed discussion of the consistency of the truncated scheme at the boundary in degenerate situations,
a setting not covered by our results.
\end{remark}

\section{Conclusion}
\label{sec:conc_err}

This paper extends the analysis in \cite{barles2007error} from the Cauchy problem  for HJB equations in $\R^d$ to the Cauchy-Dirichlet problem in bounded domains.
We use regularity conditions for the domain and the boundary data identical to \cite{KrylovDong07Estimates}. In particular, we adopt the use of a so-called barrier function
to control the regularity and error near the boundary.
Using the framework developed, we are able to analyse the classical Kushner and Dupuis scheme and the truncated semi-Lagrangian scheme proposed in \cite{ReisingerRotaetxe17}.
The error bounds obtained are of the same order as the known results from \cite{barles2007error} on the whole space, despite the lower consistency order of the semi-Lagrangian scheme near the boundary. 


%

{

The order of the lower bounds is not as good as in \cite{KrylovDong07Estimates}, as we consider more general, practically applicable fully discrete schemes. Continuous dependence estimates for such schemes, which are essential for symmetric error bounds with the current methodology, are still an open question on the whole space and for domains.

We did not present any numerical experiments as they are available in the literature for both semi-Lagrangian and finite difference schemes.
We point especially to \cite{ReisingerRotaetxe17} for detailed tests of the truncated semi-Lagrangian scheme.

Further work should include the relaxation of the regularity requirements on the boundary. At present, the existence of a regular barrier function encapsulates
the smoothness of the boundary and the non-degeneracy of the operator in relation to the boundary. While this allows for some examples with non-smooth boundaries (e.g., corners with small angles) and degeneracies (e.g., parallel to the boundary), as pointed out in Example 2.3 in \cite{KrylovDong07Estimates}, it rules out other simple examples.
It would be desirable to include all cases where Dirichlet data are satisfied strongly, or indeed to extend the schemes and their analysis to the general situation where boundary conditions are satisfied weakly.

 }

\bigskip

{\bf Acknowledgements}: We are grateful to G.~Barles, P.A.~Forsyth, and E.R.~Jakobsen for helpful comments and for pointing us to key references.

\appendix

\section{Background definitions and results}

\begin{definition}[\,{\bf Parabolic semijets, Definition 2.1 in \cite{JakobsenEstimates}}]\label{def:semijets}
For a function $u$ belonging to $\mathrm{USC}([0, T] \times \bar{\Omega}; \R)$ ($\mathrm{LSC}([0, T] \times \bar{\Omega}; \R)$) that is locally bounded, the second-order parabolic superjet (subjet) of $u$ at $(t, x) \in Q_T$, denoted by $\mathcal{P}^{2, +(-)}u(t, x)$, is defined as the set of triples $(a, p, X) \in \R \times \R^d \times \mathcal{S}^d$ such that
\begin{align*}
u(s, y) \!
\begin{array}{c}
\leq \\
(\geq) 
\end{array}
\!
u(t, x)  \!\!\;+ \!\!\; a(s\!\!\;-\!\!\;t) \!\!\;+\!\!\; \langle p, y \!\!\;- \!\!\;x\rangle  \!\!\;+ \!\!\; \frac{1}{2} \langle X(y\!\!\;-\!\!\;x), y\!\!\;-\!\!\;x\rangle 
+ o(|s\!\!\;-\!\!\;t| + |x\!\!\;-\!\!\;y|^2),
\end{align*}
as $Q_T \ni (s, y) \to (t, x)$. 
We define the closure $\overline{\mathcal{P}}^{2, +(-)}u(t, x)$ as the set of $(a, p, X) \in \R \times \R^d \times \mathcal{S}^d$ for which there exists $(t_n, x_n, a_n, p_n, X_n) \in Q_T \times  \R \times \R^d \times \mathcal{S}^d$ 
such that $(t_n, x_n, u(t_n, x_n), a_n, p_n, X_n) \to (t, x, u(t, x), a, p, X)$ as $n \to \infty$ and $(a_n, p_n, X_n) \in \mathcal{P}^{2, +(-)}u(t_n, x_n)$ for all $n$.
\end{definition}

\begin{theorem}[\,{\bf Crandall-Ishii lemma, Theorem 8.3 in \cite{crandall1992user}}\,]
\label{thm:maxprinc}
Let $u_1$ and $-u_2$ belong to $\mathrm{USC}({Q}_T;\R)$. Let $\phi \in C^{1, 2, 2}((0, T) \times \Omega	\times {\Omega})$, i.e.\ once continuously differentiable in $t \in (0, T)$ and twice continuously differentiable in $(x, y)\in {\Omega} 	\times {\Omega}$.
Suppose $(t_\phi, x_\phi, y_\phi) \in (0, T) \times \Omega \times \Omega$ is a local maximum of the function
$$(t, x, y) \to u_1(t, x) - u_2(t, y) - \phi(t, x, y).$$
Suppose that there is an $r >0$ such that for every $M>0$ there is a constant $C$ such that
\[
\left\{ 
\begin{array}{l}
	a \leq C \text{ whenever } (a, p, X)\in \mathcal{P}^{2, +}u_1(t, x),\\
    \qquad |x - x_\phi| + |t - t_\phi| \leq r, \,  |u_1(t, x)| + |p| + \|X\| \leq M, \\
	b \geq C \text{ whenever } (b, q, Y)\in \mathcal{P}^{2, -}u_2(t, y),\\
    \qquad |y - y_\phi| + |t - t_\phi| \leq r, \,  |u_2(t, y)| + |q| + \|Y\| \leq M. \\
                \end{array}
              \right.
\]
Then for any $\kappa > 0$ there exist two numbers $a, b \in \mathbb{R}$ and two matrices $X, Y \in \mathcal{S}^d$ such that
\begin{align*}
(a, D_x \phi(t_\phi, x_\phi, y_\phi), X) &\in \overline{\mathcal{P}}^{2, +}u_1(t_\phi, x_\phi), \\
(b, -D_y \phi(t_\phi, x_\phi, y_\phi), Y) &\in \overline{\mathcal{P}}^{2, -}u_2(t_\phi, y_\phi),
\end{align*}
\begin{align}\label{eq:ineqXY}
-\left(\frac{1}{\kappa} + \|A\|\right) \begin{pmatrix}
I & 0 \\
0 & I
\end{pmatrix} \leq \begin{pmatrix}
X & 0 \\
0 & -Y
\end{pmatrix} \leq A + \kappa A^2,
\end{align}
where $A = D^2 \phi(t_\phi, x_\phi, y_\phi)$, $\|A\|= \sup_{|\xi| \leq 1} \{\xi^\top A \xi  \}$ and $a - b = \partial_t \phi(t_\phi, x_\phi, y_\phi)$.
\end{theorem}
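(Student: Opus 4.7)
The plan is to prove the Crandall--Ishii lemma (parabolic version) via regularization by parabolic sup-/inf-convolutions, followed by an application of Alexandrov's theorem at the maximum point, and a limiting argument. The structure is standard (see Crandall--Ishii--Lions \cite{crandall1992user}, \cite{crandallIshii}), but requires delicate bookkeeping in the parabolic setting.

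First, I would reduce to a canonical form. By localizing near $(t_\phi, x_\phi, y_\phi)$, extracting a strict local maximum using the penalization $\eta(|t-t_\phi|^2+|x-x_\phi|^4+|y-y_\phi|^4)$ for small $\eta>0$, and then performing a Taylor expansion of $\phi$, one may reduce to the case where $\phi$ is purely quadratic in $(x,y)$ with Hessian $A = D^2_{(x,y)}\phi(t_\phi,x_\phi,y_\phi)$ and affine in $t$, since the higher-order remainder contributes only an $o(|z-z_\phi|^2)$ term that can be absorbed.

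Second, for $\delta>0$ small, I would regularize via the parabolic sup-convolution
\[
u_1^\delta(t,x)\;=\;\sup_{(s,y)}\Bigl\{u_1(s,y)-\tfrac{1}{2\delta}\bigl(|t-s|^2+|x-y|^2\bigr)\Bigr\},
\]
and the corresponding inf-convolution $u_2^\delta$. Standard properties give: (i) $u_1^\delta$ is semi-convex and $-u_2^\delta$ is semi-convex (so both are twice differentiable a.e.\ by Alexandrov); (ii) $u_1^\delta\searrow u_1$ and $u_2^\delta\nearrow u_2$ as $\delta\to 0$; (iii) superjets propagate, namely any element of $\mathcal{P}^{2,+}u_1^\delta(t,x)$ lifts to an element of $\mathcal{P}^{2,+}u_1$ at a nearby point, and dually for $u_2$. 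The boundedness hypothesis on $a,b,p,q,X,Y$ is what guarantees that the ``nearby point'' where the lifted jet lives stays in the region where $|a|\le C$ is controlled; this is precisely the role of the $(r,M,C)$ condition in the parabolic statement.

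Third, at the (strict) local maximum of $u_1^\delta(t,x)-u_2^\delta(t,y)-\phi(t,x,y)$, apply Jensen's lemma to perturb to a nearby maximum point where both $u_1^\delta$ and $u_2^\delta$ are twice differentiable in $(x,y)$ and once in $t$. At such a point, first-order conditions give time-derivatives $a^\delta,b^\delta$ with $a^\delta - b^\delta = \partial_t\phi$, and second-order conditions give Hessians $X^\delta$ of $u_1^\delta$ and $Y^\delta$ of $u_2^\delta$ whose block-diagonal $\mathrm{diag}(X^\delta,-Y^\delta)$ is bounded above by $A$ (from $D^2(u_1^\delta(\cdot)-u_2^\delta(\cdot)-\phi)\le 0$). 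The lower bound $-(\tfrac{1}{\kappa}+\|A\|)I$ follows from the $\delta^{-1}$ semi-convexity bound inherent in the sup-convolution.

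Fourth, to sharpen the upper bound from $A$ to $A+\kappa A^2$, I would apply the matrix identity/inequality trick: for the symmetric matrix pair $(X^\delta,Y^\delta)$ with $\mathrm{diag}(X^\delta,-Y^\delta)\le A$, one establishes $\mathrm{diag}(X^\delta,-Y^\delta)\le A+\kappa A^2$ by completing the square with parameter $\kappa$, using $(I+\kappa A)(I+\kappa A)^{-1}=I$ and the fact that $A+\kappa A^2 = A(I+\kappa A)$ dominates $A$ on the relevant subspace. Finally, pass $\delta\to 0$: the tuples $(a^\delta,D_x\phi,X^\delta)$ and $(b^\delta,-D_y\phi,Y^\delta)$ stay in a bounded set by the hypothesis, so one extracts convergent subsequences, and by definition of $\overline{\mathcal P}^{2,\pm}$ the limits lie in $\overline{\mathcal P}^{2,+}u_1(t_\phi,x_\phi)$ and $\overline{\mathcal P}^{2,-}u_2(t_\phi,y_\phi)$.

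The main obstacle is the second-order matrix inequality \eqref{eq:ineqXY}: extracting the correct $\kappa$-dependence requires combining Alexandrov differentiability of sup-convolutions (only a.e., hence the need for Jensen's lemma to select good approximate maximum points) with a careful algebraic manipulation of $A(I+\kappa A)$. The parabolic aspect adds the subtlety that $\partial_t$ is a first-order object with no regularizing effect from sup-convolution alone, which is why the lemma must be stated under the a priori bound on $a,b$ --- this asymmetry between time and space is the key departure from the elliptic version.
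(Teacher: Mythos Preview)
The paper does not supply its own proof of this statement: Theorem~\ref{thm:maxprinc} is placed in the appendix as a background result quoted verbatim from Theorem~8.3 of \cite{crandall1992user}, with no accompanying argument. There is therefore nothing in the paper to compare your proposal against.

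That said, your outline follows the standard route taken in \cite{crandall1992user} and \cite{crandallIshii} (reduction to a quadratic $\phi$, sup/inf-convolution, Jensen's lemma plus Alexandrov differentiability, then passage to the limit), so in that sense you are reproducing the proof the paper is implicitly invoking. Two small points of caution in your sketch: (i) the lower bound $-(\tfrac{1}{\kappa}+\|A\|)I$ in \eqref{eq:ineqXY} does not come directly from the $\delta^{-1}$-semiconvexity of the sup-convolution (that would give a $\delta$-dependent bound that blows up as $\delta\to 0$); rather, both the upper and lower bounds are obtained simultaneously from the matrix manipulation you allude to in step four, and the $\kappa$-dependence enters there; (ii) your description of that matrix step (``completing the square with parameter $\kappa$'') is too vague to be a proof---the actual argument in \cite{crandall1992user} proceeds by conjugating with $(I+\kappa A)^{1/2}$ and is where most of the algebraic work lies.
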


\begin{lemma}[\,{\bf Lemma V.7.1 in \cite{fleming2006controlled}}\,]
\label{lm:defModLi}
Let $\mathcal{L}^{\alpha}_i$ be as in \eqref{eq:defLi}. 
Assume (A1). Then, there exists a continuous function $\omega : [0, \infty) \to [0, \infty)$, independent of $i$, satisfying $\omega(0^+) = 0$ such that
\begin{align}
\label{eq:defModLi}
\sup_{\alpha \in \A_i}\mathcal{L}^{\alpha}_i(t,y,r,\beta(x-y),Y) - \sup_{\alpha \in \A_i}\mathcal{L}^{\alpha}_i(t,x,r,\beta(x-y),X)
	\leq \omega(\beta |x-y|^2 + |x-y|),
\end{align}
for every $(t, x), (t, y) \in Q_T$, $\beta > 0$, and symmetric matrices $X, Y \in \mathcal{S}^d$ satisfying 
\begin{align} \label{ineqXY}
-3 \beta \begin{pmatrix}
I & 0 \\
0 & I
\end{pmatrix}  \leq \begin{pmatrix}
X & 0 \\
0 & -Y
\end{pmatrix} \leq 3 \beta \begin{pmatrix}
I & -I \\
-I & I
\end{pmatrix}.
\end{align}

\end{lemma}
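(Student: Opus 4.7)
The plan is to bound the difference $\mathcal{L}^{\alpha}_i(t,y,r,\beta(x-y),Y) - \mathcal{L}^{\alpha}_i(t,x,r,\beta(x-y),X)$ uniformly in $\alpha\in\A_i$ and then pass the bound through the supremum. Writing out the definition in \eqref{eq:defLi} and noting that the gradient slot is the same in both terms, the difference splits as
\begin{align*}
\Big(\tr[a^\alpha_i(t,x)X]-\tr[a^\alpha_i(t,y)Y]\Big)
&+\big(b^\alpha_i(t,x)-b^\alpha_i(t,y)\big)\cdot \beta(x-y)\\
&+\big(c^\alpha_i(t,x)-c^\alpha_i(t,y)\big)\,r
+\big(\ell^\alpha_i(t,x)-\ell^\alpha_i(t,y)\big).
\end{align*}
The last three terms are straightforwardly controlled by $C\big(\beta|x-y|^2+|x-y|\big)$ using the Lipschitz bounds on $b^\alpha_i,c^\alpha_i,\ell^\alpha_i$ provided by assumption (A1), with $C$ independent of $\alpha$ and $i$. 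Here the bound on $r$ enters into the constant, but this is acceptable as long as $r$ is a fixed argument.

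The main obstacle is the trace term, and this is where the matrix inequality satisfied by $(X,Y)$ must be used decisively. Using the factorisation $a^\alpha_i=\tfrac12\sigma^\alpha_i(\sigma^\alpha_i)^T$ from (A1) and the cyclicity of the trace, write
\[
\tr[a^\alpha_i(t,x)X]-\tr[a^\alpha_i(t,y)Y]
=\tfrac12\sum_{k=1}^{P}\Big\langle X\sigma^\alpha_i(t,x)e_k,\sigma^\alpha_i(t,x)e_k\Big\rangle
-\tfrac12\sum_{k=1}^{P}\Big\langle Y\sigma^\alpha_i(t,y)e_k,\sigma^\alpha_i(t,y)e_k\Big\rangle,
\]
where $\{e_k\}$ is the standard basis of $\R^P$. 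For each $k$ test the right inequality in \eqref{ineqXY} against the vector $\xi_k:=\big(\sigma^\alpha_i(t,x)e_k,\sigma^\alpha_i(t,y)e_k\big)\in\R^{2d}$ to obtain
\[
\big\langle X\sigma^\alpha_i(t,x)e_k,\sigma^\alpha_i(t,x)e_k\big\rangle-\big\langle Y\sigma^\alpha_i(t,y)e_k,\sigma^\alpha_i(t,y)e_k\big\rangle
\leq 3\beta\,\big|\sigma^\alpha_i(t,x)e_k-\sigma^\alpha_i(t,y)e_k\big|^2.
\]
Summing over $k$ yields $\tr[a^\alpha_i(t,x)X]-\tr[a^\alpha_i(t,y)Y]\leq \tfrac{3\beta}{2}\,|\sigma^\alpha_i(t,x)-\sigma^\alpha_i(t,y)|_{F}^{2}\leq \tfrac{3\beta}{2}C_0^2\,|x-y|^2$, with $|\cdot|_F$ the Frobenius norm and $C_0$ the constant from (A1); crucially this bound is independent of $\alpha$ and $i$.

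Combining the two parts, the pointwise difference in $\alpha$ is bounded by $C(\beta|x-y|^2+|x-y|)$ with a constant depending only on $C_0$, $|r|$ and $T$. Since
\[
\sup_{\alpha\in\A_i}f(\alpha)-\sup_{\alpha\in\A_i}g(\alpha)\leq \sup_{\alpha\in\A_i}\big(f(\alpha)-g(\alpha)\big),
\]
the inequality \eqref{eq:defModLi} follows with $\omega(s):=Cs$, which is continuous on $[0,\infty)$ and satisfies $\omega(0^+)=0$; the $\alpha$-uniformity built into the constant also ensures $\omega$ does not depend on $i$.
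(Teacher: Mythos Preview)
Your argument is correct and is in fact the standard proof of this structural condition. The paper does not give its own proof of this lemma; it is stated in the appendix as a citation of Lemma~V.7.1 in \cite{fleming2006controlled} and used as a black box in the comparison argument, so there is nothing to compare against. One minor remark: as you note, the constant in your linear modulus $\omega(s)=Cs$ depends on $|r|$ through the $c^\alpha_i$-term, which is harmless here since $r$ is held fixed in the statement (and in the application in Theorem~\ref{thm:WP} the relevant values are uniformly bounded).
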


\begin{lemma}[\,{\bf Lemma A.2 in \cite{barles2007error}}\,]
\label{lem:Mishii}
Let $u\in USC(Q_T;\R^M)$ be a bounded above subsolution of
\eqref{eq:A1} and
$\bar{u}\in LSC(Q_T;\R^M)$ be a bounded below supersolution
of another equation of the form \eqref{eq:A1} where the functions
$\mathcal{L}^{\alpha}_i$ are replaced by functions
$\bar{\mathcal{L}}^{\alpha}_i$ satisfying the same
assumptions. Let $\phi:(0,T) \times \Omega \times \Omega\to \R$ be a smooth function
bounded from below. We denote
$$
\psi_i(t,x,y) \defeq u_i(t,x)-\bar{u}_i(t,y)-\phi(t,x,y),
$$ 
and $\bar{M}\defeq\sup_{i,t,x,y}\,\psi_i(t,x,y)$. 
If there exists a maximum point for $\bar{M}$, i.e.\ a point $(i',t_0,x_0,y_0) \in \II \times (0, T) \times \Omega \times \Omega$
such that $\bar{M} = \psi_{i'}(t_0,x_0,y_0)$, then there exists $i_0 \in \II$
such that $(i_0,t_0,x_0,y_0)$ is also a maximum point for $\bar{M}$ and 
$\bar{u}_{i_0}(t_0,y_0)<\mathcal{M}_{i_0}\bar{u}(t_0,y_0)$.
\end{lemma}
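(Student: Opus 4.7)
The plan is an iterative improvement of the switching index. Given the $i' \in \mathcal{I}$ from the hypothesis, I would first check whether the strict obstacle inequality $\bar u_{i'}(t_0, y_0) < \mathcal{M}_{i'} \bar u(t_0, y_0)$ already holds; if so, simply take $i_0 = i'$ and we are done. Otherwise I would produce a chain of indices $i' = j_0, j_1, j_2, \ldots$, each a maximiser of $\bar M$ at $(t_0, x_0, y_0)$, such that $\bar u_{j_{n+1}}(t_0, y_0) \leq \bar u_{j_n}(t_0, y_0) - k$. Since $\mathcal{I}$ is finite and the switching cost $k > 0$, this strict decrease cannot continue indefinitely (after at most $|\mathcal{I}|$ steps an index would have to repeat, contradicting the strict decrease), so the procedure must terminate at some $j_n =: i_0$ at which the desired strict obstacle inequality holds.

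The inductive step is where the actual work lies. Suppose the strict obstacle inequality fails at $j_n$, i.e.\ $\bar u_{j_n}(t_0, y_0) \geq \mathcal{M}_{j_n} \bar u(t_0, y_0) = \min_{j \neq j_n}\{\bar u_j(t_0, y_0) + k\}$. I would pick $j_{n+1} \neq j_n$ attaining this minimum, so that $\bar u_{j_{n+1}}(t_0, y_0) + k \leq \bar u_{j_n}(t_0, y_0)$. On the other hand, $u$ is a viscosity subsolution at the interior point $(t_0, x_0)$, and the obstacle part of the inequality $\max\{\,\cdot\,;\, u_{j_n} - \mathcal{M}_{j_n} u\} \leq 0$ is derivative-free, so one obtains pointwise $u_{j_n}(t_0, x_0) \leq \mathcal{M}_{j_n} u(t_0, x_0) \leq u_{j_{n+1}}(t_0, x_0) + k$. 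Combining these two estimates with the fact that $j_n$ is a maximiser yields
\begin{align*}
\psi_{j_{n+1}}(t_0, x_0, y_0)
	& = u_{j_{n+1}}(t_0, x_0) - \bar u_{j_{n+1}}(t_0, y_0) - \phi(t_0, x_0, y_0) \\
	& \geq u_{j_n}(t_0, x_0) - \bar u_{j_n}(t_0, y_0) - \phi(t_0, x_0, y_0) = \bar M,
\end{align*}
and by the supremum definition of $\bar M$ equality must hold, so $j_{n+1}$ is again a maximiser.

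The main obstacle I expect is the rigorous justification of the pointwise obstacle inequality for the USC viscosity subsolution $u$ at the interior point $(t_0, x_0)$. This is classical for Hamilton--Jacobi--Bellman obstacle systems because the obstacle term $r_i - \mathcal{M}_i r$ carries no derivatives, so the max in the definition of viscosity subsolution forces the obstacle inequality at every point where a test function can be fitted; however at a general interior point a short additional step is needed using the upper semicontinuity of $u_i$ and of $\mathcal{M}_i u$ (a minimum of USC functions remains USC) together with a sup-convolution or density argument. Once this is in place, the rest of the proof is an elementary pigeonhole argument on the finite set $\mathcal{I}$, carried out entirely at the single point $(t_0, x_0, y_0)$, with no extra analytic machinery required.
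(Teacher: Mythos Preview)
The paper does not give its own proof of this lemma; it is stated in the appendix as a background result quoted verbatim from \cite{barles2007error}, Lemma~A.2. Your iterative argument is exactly the standard proof from that reference and is correct.

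Regarding the point you flag as a potential obstacle: the pointwise obstacle inequality $u_i(t_0,x_0)\le \mathcal{M}_i u(t_0,x_0)$ for a bounded-above USC subsolution holds at \emph{every} interior point without any sup-convolution or density machinery. Since $u_i$ is USC and bounded above, at any $(t_0,x_0)\in Q_T$ one can take a smooth test function $\varphi$ (e.g.\ a steep paraboloid) such that $u_i-\varphi$ attains a local maximum there; the subsolution inequality then reads $\max\{\,\cdot\,;\,u_i(t_0,x_0)-\mathcal{M}_i u(t_0,x_0)\}\le 0$, and the obstacle term, being derivative-free, gives the desired pointwise bound directly. With this in hand your chain argument goes through verbatim, and the strict decrease $\bar u_{j_{n+1}}(t_0,y_0)\le \bar u_{j_n}(t_0,y_0)-k$ forces termination in at most $M$ steps by pigeonhole.
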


\begin{theorem}[\,{\bf McShane's Theorem. Corollary 1 in \cite{mcshane1934}}\,]
\label{mcshane}
If $f$ is a real function defined on a subset $E$ of a metric space $S$, and $f$ satisfies on $E$ a Lipschitz or H\"{o}lder condition
$$| f(x_1) - f(x_2)| \leq L d_E( x_1, x_2 ) ^\alpha,$$
where $\alpha \in (0, 1]$, then $f$ can be extended to $S$ preserving the Lipschitz or H\"{o}lder condition with the same constant $L$.
\end{theorem}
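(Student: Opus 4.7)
The plan is to construct the extension explicitly via the classical McShane formula
\[
F(x) \defeq \inf_{y \in E} \bigl\{ f(y) + L\, d_S(x, y)^\alpha \bigr\}, \qquad x \in S,
\]
and verify in turn that (a) $F$ is real-valued, (b) $F = f$ on $E$, and (c) $F$ satisfies the same Hölder-$\alpha$ bound on all of $S$ with the same constant $L$. For (a), I would fix any $y_0 \in E$ and combine the hypothesis with the reverse triangle inequality and subadditivity of $t \mapsto t^\alpha$ to conclude $f(y) + L\,d_S(x,y)^\alpha \ge f(y_0) - L\,d_S(x,y_0)^\alpha$ uniformly in $y \in E$, so the infimum is finite. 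For (b), the bound $F(x) \le f(x)$ comes from taking $y = x$ in the infimum, while $F(x) \ge f(x)$ is exactly the hypothesis, since $f(y) + L\,d_S(x,y)^\alpha \ge f(x)$ for every $y \in E$.

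The central step is (c). Given $x_1, x_2 \in S$ and any $y \in E$, the triangle inequality $d_S(x_1, y) \le d_S(x_1, x_2) + d_S(x_2, y)$ together with subadditivity $(a+b)^\alpha \le a^\alpha + b^\alpha$ (valid for all $\alpha \in (0,1]$, with equality at $\alpha = 1$) yields
\[
f(y) + L\,d_S(x_1, y)^\alpha \;\le\; \bigl[ f(y) + L\,d_S(x_2, y)^\alpha \bigr] + L\,d_S(x_1, x_2)^\alpha.
\]
Taking the infimum over $y \in E$ on the right produces $F(x_1) \le F(x_2) + L\,d_S(x_1, x_2)^\alpha$, and swapping the roles of $x_1$ and $x_2$ gives the two-sided estimate $|F(x_1) - F(x_2)| \le L\,d_S(x_1, x_2)^\alpha$ with the same constant $L$.

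The only substantive ingredient is the subadditivity inequality, which is precisely what preserves the exponent $\alpha$ and the constant $L$ intact; the rest is bookkeeping with infima. An equivalent construction is the sup-formula $\tilde F(x) \defeq \sup_{y \in E}\bigl\{f(y) - L\,d_S(x,y)^\alpha\bigr\}$, which furnishes the largest admissible extension, while $F$ is the smallest; any function lying between them is also an admissible extension, so in particular the existence claim of the theorem follows.
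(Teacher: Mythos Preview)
Your argument is correct and is exactly the classical McShane construction. The paper does not give its own proof of this statement: it is listed in the appendix as a background result with a citation to McShane's original paper, so there is nothing to compare against beyond noting that your proof reproduces the standard one.
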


\bibliography{refs}        
\bibliographystyle{abbrv}  
\end{document}